\begin{document}
\title{Fr\'echet cardinals}
\author{Gabriel Goldberg}

\maketitle
\begin{abstract}
An infinite cardinal \(\lambda\) is called {\it Fr\'echet} if the Fr\'echet filter on \(\lambda\) extends to a countably complete ultrafilter. We examine the relationship between Fr\'echet cardinals and strongly compact cardinals under a hypothesis called the Ultrapower Axiom.
\end{abstract}
\section{Introduction}
An infinite cardinal \(\lambda\) is called {\it Fr\'echet} if the Fr\'echet filter on \(\lambda\) extends to a countably complete ultrafilter. In this paper, we examine the relationship between Fr\'echet cardinals and strongly compact cardinals. Obviously if \(\kappa\) is strongly compact, then every cardinal \(\lambda\) with \(\text{cf}(\lambda) \geq \kappa\) is Fr\'echet. The converse is not provable in ZFC.  Our focus is proving strong converses to this fact under an assumption called the Ultrapower Axiom (UA), which serves as a regularity property for countably complete ultrafilters. Our main theorem is the following:

\begin{thm}[UA]
If \(\delta\) is a Fr\'echet successor cardinal or a Fr\'echet inaccessible cardinal then some \(\kappa\leq \delta\) is \(\delta\)-strongly compact.
\end{thm}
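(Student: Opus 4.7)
My approach is to single out a canonical countably complete ultrafilter on \(\delta\) using UA and then argue that its completeness is the desired \(\delta\)-strongly compact cardinal. Because \(\delta\) is Fr\'echet, there is at least one countably complete uniform ultrafilter on \(\delta\). Using the Ketonen order on countably complete ultrafilters (wellfounded in ZFC, and linearized by UA), I would let \(U\) be the Ketonen-minimal such ultrafilter and set \(\kappa\) to be the completeness of \(U\). Then \(\kappa\) is measurable and \(\kappa\leq\delta\); in the degenerate case \(\kappa=\delta\) the measurability of \(\delta\) already yields \(\delta\)-strong compactness, so the interesting case is \(\kappa<\delta\), which I treat henceforth.

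To show \(\kappa\) is \(\delta\)-strongly compact I would verify the standard covering characterization applied to \(j_U\): find \(X\in M_U\) with \(j_U[\delta]\subseteq X\) and \(|X|^{M_U}<j_U(\kappa)\). Suppose for contradiction that no such \(X\) exists; then \(j_U[\delta]\) is too spread out in \(M_U\) to be covered by any set of \(M_U\)-size below \(j_U(\kappa)\). I would leverage this failure to produce inside \(M_U\) a countably complete ultrafilter \(W\) that captures the obstruction, and then invoke UA's comparison of \(U\) and \(W\) to pull back a countably complete uniform ultrafilter on \(\delta\) that lies strictly Ketonen-below \(U\). This contradicts the minimality of \(U\), establishing the covering property and hence \(\delta\)-strong compactness at \(\kappa\).

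The crux of the proof, and the place where the hypothesis on \(\delta\) is decisive, is the production of this strictly Ketonen-smaller ultrafilter on \(\delta\). For \(\delta=\lambda^+\), I would analyze \(\mathrm{cf}^{M_U}(\sup j_U[\delta])\) by using the canonical \(\lambda\)-sequences cofinal in \(\delta\); a failure of covering should force this cofinality upward and thereby let me extract a factor of \(U\) of completeness strictly below \(\kappa\). For \(\delta\) inaccessible, I would instead use regularity together with the strong limit property to restrict \(U\) to a set on which the Ketonen rank drops. The singular-non-inaccessible case is more delicate because one lacks control over the cofinal structure in \(j_U(\delta)\), which is presumably why such \(\delta\) are excluded from the statement.

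The main obstacle I anticipate is exactly this last construction: producing the Ketonen-smaller ultrafilter and verifying that it still extends the Fr\'echet filter on \(\delta\). This requires a careful bookkeeping of which internal ultrafilters in \(M_U\) correspond, under UA's comparison, to \emph{uniform} ultrafilters on \(\delta\) in \(V\), and the argument must treat the successor and inaccessible cases by subtly different combinatorial means.
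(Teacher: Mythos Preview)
Your setup matches the paper: take \(U = U_\delta\) the Ketonen-least uniform ultrafilter on \(\delta\), set \(\kappa = \kappa_U\), and aim for the covering \(\text{cf}^{M_U}(\sup j_U[\delta]) < j_U(\kappa)\). The minimality of \(U\) does yield that \(\sup j_U[\delta]\) carries no tail-uniform countably complete ultrafilter in \(M_U\), hence \(\iota = \text{cf}^{M_U}(\sup j_U[\delta])\) is \emph{not} Fr\'echet in \(M_U\). But this is exactly where your plan stalls rather than succeeds: from a failure of covering you get \(\iota \geq j_U(\kappa)\), and now there is no ultrafilter \(W\) in \(M_U\) at \(\iota\) to ``capture the obstruction'' and pull back---the obstruction is precisely the \emph{absence} of such a \(W\). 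Your proposed contradiction mechanism is inverted.

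The paper closes the argument the other way around. It first proves, working entirely in \(V\) and \emph{before} looking at \(M_U\), that every regular cardinal in \([\kappa_U,\delta]\) is Fr\'echet. Elementarity then forces every \(M_U\)-regular cardinal in \([j_U(\kappa),j_U(\delta)]\) to be Fr\'echet in \(M_U\); since \(\iota\) is such a cardinal but is not Fr\'echet there, covering cannot fail. The substance of the theorem is thus the claim ``every regular cardinal in \([\kappa_U,\delta]\) is Fr\'echet,'' and this is not bookkeeping: it requires showing that for \(\kappa_U\leq\gamma<\delta\) one has \(\gamma^\sigma = \gamma^+\), which in turn rests on an analysis of \emph{isolated} cardinals (limit Fr\'echet cardinals that are not limits of Fr\'echet cardinals). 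One must show no isolated \(\nu\) can sit in \((\kappa_U,\delta)\), and this uses the internal relation, commuting-ultrapower arguments, and the nonisolation lemmas. The inaccessible case that is \emph{not} a limit of Fr\'echet cardinals is handled separately by showing such a \(\delta\) is outright measurable with \(\kappa_{U_\delta}=\delta\). None of this structure appears in your sketch, and the direct ``pull back a smaller ultrafilter'' route you describe does not produce it.
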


This theorem is a key step in the proof of the equivalence of strong compactness and supercompactness assuming UA (\cite{SC}). The situation for Fr\'echet singular cardinals and weakly inaccessible cardinals is not quite as clear and is wrapped up in the analysis of {\it isolated cardinals}, defined below.

The other main result of this paper uses the analysis of Fr\'echet cardinals to improve the main result of \cite{MO} by removing its cardinal arithmetic hypothesis.

\begin{thm}[UA] The Mitchell order wellorders the class of generalized normal ultrafilters.\end{thm}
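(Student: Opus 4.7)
The plan is to separate the wellorder claim into (a) wellfoundedness of the Mitchell order, and (b) linearity on the class of generalized normal ultrafilters. Wellfoundedness of the Mitchell order on countably complete ultrafilters is a theorem of ZFC (via a standard iterated-ultrapowers rank argument), so the real work lies in proving linearity; this is where UA and the Fr\'echet analysis of the present paper come in.

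First I would invoke the UA comparison lemma: given two generalized normal ultrafilters $U$ and $W$, there is an internal ultrapower comparison of $j_U$ and $j_W$ producing a common target $N$ together with internal ultrapower embeddings $i : M_U \to N$ and $k : M_W \to N$, where $M_U, M_W$ are the ultrapowers by $U, W$. The factor embeddings are realized by countably complete ultrafilters $U^\star \in M_U$ and $W^\star \in M_W$. To deduce linearity in the Mitchell order it suffices to prove that at least one of $U^\star$ and $W^\star$ is principal, since a principal factor on the $W$-side puts $W$ into $M_U$ and hence below $U$ in the Mitchell order, and symmetrically.

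The main obstacle is controlling the factor ultrafilters $U^\star$ and $W^\star$. In \cite{MO} a cardinal arithmetic hypothesis was used to pin down the cardinals on which these live; the new ingredient available here is that those underlying cardinals are Fr\'echet in $M_U$ and $M_W$, so Theorem~1 supplies a partially strongly compact cardinal at or below each of them. Combined with the generalized normality of $U$ and $W$, this should let me rule out the case that both $U^\star$ and $W^\star$ are nonprincipal: such a configuration would force $i$ and $k$ to move a common critical point in a way inconsistent with the minimality of the seeds witnessing generalized normality of $U$ and $W$. This step is where I expect the delicate work, and it is the step that eliminates the cardinal-arithmetic hypothesis of \cite{MO}.

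With linearity established on the class of generalized normal ultrafilters and wellfoundedness already available, the relation is automatically a wellorder, finishing the proof.
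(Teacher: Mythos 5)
Your reduction to ``linearity plus ZFC wellfoundedness of the generalized Mitchell order'' is the right frame, and the paper likewise starts from the UA comparison. But there is a genuine gap at the step where you assert that a principal factor on the $W$-side ``puts $W$ into $M_U$.'' If $W^\star$ is principal, the comparison collapses to an internal ultrapower embedding $i : M_U \to M_W$ with $i \circ j_U = j_W$; this says exactly that $W$ lies at or below $U$ in the Ketonen (seed) order. It does \emph{not} yield $W \in M_U$. When $W$ concentrates on an ordinal below $\lambda = \textsc{sp}(U)$, membership in $M_U$ does follow, but only because $M_U$ is closed under $\lambda$-sequences (\cref{Partial}) --- no comparison is needed for that. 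The problematic case is $\textsc{sp}(W) = \lambda$: closure under $\lambda$-sequences says nothing about a subset of $P(P(\lambda))$ of size $2^\lambda$, and this is precisely the point where \cite{MO} needed the hypothesis $2^{<\lambda} = \lambda$. Bridging from ``$W \sE U$'' to ``$W \mo U$'' is the entire content of \cref{IPoint}, i.e.\ it is where all of the work in the paper's proof lives; you have treated it as automatic.

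Relatedly, your sketch of the ``delicate step'' --- ruling out two nonprincipal factors by a common critical point moved inconsistently with seed-minimality --- does not engage with the actual obstruction. In the hard case ($\lambda = \gamma^+$ with $2^\gamma > \lambda$) the paper instead takes the $\sE$-least $D$ with $D \not\I U$, translates it to $D' = t_U(D)$, and proves $D'$ is $\lambda^+$-complete in $M_U$ by showing that in $M_U$ the cardinal $\lambda^\sigma$ is isolated and $2^{(\lambda^+)} < \lambda^\sigma$ there (via \cref{UpperBoundLemma}, \cref{LocalGCH}, \cref{UltrafilterGCH}, and a Solovay-style count of ultrafilters on $\gamma$). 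That completeness makes $j^{M_U}_{D'}$ continuous at $[\mathrm{id}]_U = \sup j_U[\lambda]$, and weak normality then forces $U \E D$, hence $D = U$; the limit-cardinal and $2^\gamma = \lambda$ cases fall back on \cref{GCHIPoint}. Note also that the strong compactness theorem (\cref{MainThm1}) you invoke applies only to Fr\'echet successor or inaccessible cardinals, whereas $\textsc{sp}(U)$ may be a singular limit; the paper handles that case through $\gamma$-supercompactness and the results of \cite{GCH}, not through \cref{MainThm1}. In short: the skeleton is fine, but the step you flag as delicate is not the real difficulty, and the step you pass over in one clause is the theorem.
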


We will have to cite a number of results from the author's thesis and from the papers \cite{MO}, \cite{GCH}, and \cite{SC}.

\section{Uniform ultrafilters}
\begin{defn}
An ultrafilter \(U\) on a set \(X\) is {\it Fr\'echet uniform} if for all \(A\in U\), \(|A| = |X|\). A cardinal is {\it Fr\'echet} if it carries a countably complete Fr\'echet uniform ultrafilter.
\end{defn}
Apart from Fr\'echet uniformity, there is another definition of uniformity that is often used. These two notions coincide for ultrafilters on regular cardinals but diverge everywhere else.

\begin{defn}
An ultrafilter \(U\) on an ordinal \(\alpha\) is {\it tail uniform} (or just {\it uniform}) if \(\alpha \setminus \beta\in U\) for all \(\beta < \alpha\). An ordinal is {\it tail uniform} (or just {\it uniform}) if it carries a countably complete uniform ultrafilter.
\end{defn}

The basic relationship between uniform ordinals and Fr\'echet cardinals is quite simple:
\begin{lma}
An ordinal is uniform if and only if its cofinality is Fr\'echet.\qed
\end{lma}

\begin{defn}
For any ordinal \(\alpha\), \(\Un_\alpha\) denotes the set of uniform countably complete ultrafilters on \(\alpha\), \(\Un_{<\alpha} = \bigcup_{\beta <\alpha} \Un_{\beta}\), \(\Un_{\leq\alpha} = \bigcup_{\beta \leq\alpha} \Un_{\beta}\), and \(\Un = \bigcup_{\alpha\in \text{Ord}} \Un_\alpha\).
\end{defn}

Thus \(\alpha\) is uniform if and only if \(\Un_\alpha\neq \emptyset\). Note that if \(\alpha\) is a successor ordinal, then \(\alpha\) is uniform since there is a uniform principal ultrafilter on \(\alpha\).

\begin{defn}
For any \(U\in \Un\), \(\textsc{sp}(U)\) denotes the unique ordinal \(\alpha\) such that \(U\in \Un_\alpha\).
\end{defn}

\section{The Fr\'echet successor operation}
\begin{defn}
For any ordinal \(\gamma\), \(\gamma^\sigma\) denotes the least Fr\'echet cardinal strictly greater than \(\gamma\).
\end{defn}

The following conjecture drives our analysis:

\begin{conj}[UA]\label{IsolationConj}
Suppose \(\gamma\) is an ordinal and \(\lambda = \gamma^\sigma\). Either \(\gamma^\sigma = \gamma^+\) or \(\gamma^\sigma\) is measurable.
\end{conj}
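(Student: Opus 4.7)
The plan is to assume \(\lambda = \gamma^\sigma\) with \(\lambda > \gamma^+\) and show \(\lambda\) is measurable. The defining minimality of \(\lambda\) gives the crucial input: no cardinal in the open interval \((\gamma,\lambda)\) is Fr\'echet; in particular \(\gamma^+\) itself is not Fr\'echet.

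I would first dispatch the case that \(\lambda\) is a Fr\'echet successor or Fr\'echet inaccessible. Applying the main theorem of the paper yields some \(\kappa_0 \leq \lambda\) that is \(\lambda\)-strongly compact, and in particular \(\kappa_0\) is measurable, hence Fr\'echet. Minimality of \(\lambda\) then forces \(\kappa_0 \leq \gamma\) or \(\kappa_0 = \lambda\). If \(\kappa_0 = \lambda\) then \(\lambda\) is \(\lambda\)-strongly compact, hence measurable, and we are finished. Otherwise \(\kappa_0 \leq \gamma\); but then applying \(\kappa_0\)-strong compactness to the Fr\'echet filter on \(\gamma^+\) produces a \(\kappa_0\)-complete Fr\'echet uniform ultrafilter on \(\gamma^+\), contradicting the fact that \(\gamma^+\) is not Fr\'echet.

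The remaining cases---\(\lambda\) singular, or \(\lambda\) weakly but not strongly inaccessible---are the heart of the problem and where I expect the main obstacle. A Fr\'echet uniform countably complete ultrafilter on a cardinal \(\lambda\) is automatically tail uniform on the ordinal \(\lambda\), so by the lemma \(\text{cf}(\lambda)\) must itself be Fr\'echet; when \(\lambda\) is singular, minimality then forces \(\text{cf}(\lambda) \leq \gamma\). The subcase \(\text{cf}(\lambda) = \omega\) is immediate: partition \(\lambda\) into a countable chain of bounded pieces, whereupon countable completeness places one such piece in the ultrafilter, contradicting Fr\'echet uniformity. For uncountable \(\text{cf}(\lambda)\) one would need to upgrade countable completeness to \(\text{cf}(\lambda)^+\)-completeness---naturally by comparing our ultrafilter \(U\) with a Fr\'echet uniform ultrafilter on \(\text{cf}(\lambda)\) via UA's ultrapower comparison---and then run the partition argument again. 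The weakly-inaccessible case should reduce to a similar cofinality analysis. Because this portion of the argument bypasses Theorem 1 and must instead proceed through raw ultrafilter combinatorics under UA, it is the main obstacle, and presumably why the statement appears here as a conjecture rather than a theorem: its full resolution appears to be bound up in the forthcoming theory of isolated cardinals.
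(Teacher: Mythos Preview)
This statement is a \emph{conjecture} under UA alone, and the paper does not prove it; you correctly recognize this at the end of your proposal. What the paper does do is verify it under UA\,+\,GCH, and the route is rather different from yours: one simply takes \(\delta = \gamma^+\), which is regular and not Fr\'echet (since \(\gamma < \gamma^+ < \lambda = \gamma^\sigma\)), so that \(\delta^\sigma = \lambda\). By \cref{SigmaSuccessor}, \(\lambda\) is a limit cardinal, while under GCH \(2^\delta = \gamma^{++}\) is a successor; hence \(\lambda > 2^\delta\), and \cref{UpperBound} gives measurability outright. This single uniform argument dissolves the singular and weakly-inaccessible cases you flag as obstacles: under GCH they simply do not arise, because \cref{UpperBound} forces \(\lambda\) to be measurable (hence regular and strongly inaccessible) before any case split is needed.

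Your handling of the strongly inaccessible case via \cref{MainThm1} is correct, but two remarks are in order. First, the ``Fr\'echet successor'' subcase you include is vacuous here: once \(\lambda \neq \gamma^+\), \cref{SigmaSuccessor} already forces \(\lambda\) to be a limit cardinal. Second, the route is slightly circuitous, since the proof of \cref{MainThm1} in the inaccessible isolated case itself invokes \cref{UpperBound}; you are essentially recovering the paper's argument through an extra layer. Your diagnosis that the singular and non-strongly-weakly-inaccessible cases are the genuine obstruction under UA without GCH is accurate and matches the paper's own stance---these are exactly the cases tied to the unresolved structure of isolated cardinals.
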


We will verify this conjecture assuming UA + GCH in \cref{UpperBound}, and we will also prove various approximations to it assuming UA alone. But we begin with a related ZFC fact:

\begin{lma}\label{SigmaSuccessor}
For any ordinal \(\gamma\), either \(\gamma^\sigma = \gamma^+\) or \(\gamma^\sigma\) is a limit cardinal.\qed
\end{lma}

This is an immediate consequence of the following lemma.

\begin{lma}\label{SuccessorLemma}
Suppose \(\lambda\) is a cardinal and \(\lambda^+\) is Fr\'echet. Either \(\lambda\) is Fr\'echet or \(\lambda\) is singular and all sufficiently large regular cardinals below \(\lambda\) are Fr\'echet.
\begin{proof}
Fix a countably complete Fr\'echet uniform ultrafilter \(U\) on \(\lambda^+\). 

Assume first that \(\lambda\) is regular. By a theorem of Prikry \cite{Prikry}, \(U\) is \(\lambda\)-decomposable, and therefore \(\lambda\) is Fr\'echet as desired.

Assume instead that \(\lambda\) is singular. Let \(\iota = \text{cf}^{M_U}(\sup j_U[\lambda^+])\). By a theorem of Ketonen \cite{Ketonen}, every set of ordinals \(X\) such that \(|X| \leq \lambda^+\) is contained in a set of ordinals \(X'\in M_U\) such that \(|X'|^{M_U} \leq \iota\). 
\begin{case} \(\iota < \sup j_U[\lambda]\)\end{case}
Fix \(\gamma < \lambda\) such that \(\iota < j_U(\gamma)\). We claim that any regular \(\delta\) with \(\gamma \leq \delta < \lambda\) is Fr\'echet. Note that \(\text{cf}^{M_U}(\sup j_U[\delta]) \leq \iota < j_U(\gamma) < \sup j_U[\delta]\) so \(\sup j_U[\delta]\) is a singular ordinal in \(M_U\). On the other hand by elementarity, \(j_U(\delta)\) is a regular cardinal in \(M_U\). It follows that \(\sup j_U[\delta] < j_U(\delta)\). Therefore \(j_U\) is discontinuous at \(\delta\), and it follows that \(\delta\) is Fr\'echet.

\begin{case} \(\iota\geq\sup j_U[\lambda]\)\end{case}
Note that \(\iota < j_U(\lambda)\) and \(\text{cf}(\iota) = \lambda^+\). Therefore there is a tail uniform (but {\it not} Fr\'echet uniform) ultrafilter \(W\) on the ordinal \(\iota\). Let \(Z = \{X\subseteq \lambda :  j_U(X)\cap \iota\in W\}\). We claim \(Z\) is a Fr\'echet uniform countably complete ultrafilter on \(\lambda\). Suppose \(X\in Z\), and we will show \(|X| = \lambda\). Since \(X\subseteq \lambda\) and \(j_U(X)\cap \iota \in W\), we have that \(j_U(X)\cap \iota\) is cofinal in \(\iota\). Since \(\iota\) is regular in \(M_U\), \(|j_U(X)\cap \iota|^{M_U} = \iota \geq \sup j_U[\lambda]\). It follows easily that \(|X|\geq \lambda\) as desired.
\end{proof}
\end{lma}

\begin{proof}[Proof of \cref{SigmaSuccessor}]
Suppose \(\gamma^\sigma\) is not a limit cardinal. Then \(\gamma^\sigma = \lambda^+\) for some cardinal \(\lambda\). By \cref{SuccessorLemma}, \(\lambda\) is either Fr\'echet or else a limit of Fr\'echet cardinals. Suppose towards a contradiction that \(\gamma < \lambda\). Then there is a Fr\'echet cardinal in the interval \((\gamma,\lambda]\). This contradicts that \(\gamma^\sigma = \lambda^+\).
\end{proof}

We conclude this section by pointing out a consequence of \cref{SuccessorLemma} for \(\omega_1\)-strongly compact cardinals, a notion due to Bagaria-Magidor \cite{MagidorBagaria}:

\begin{cor}
Let \(\kappa\) be the least \(\omega_1\)-strongly compact cardinal. Then \(\kappa\) carries a countably complete Fr\'echet uniform ultrafilter.\qed
\end{cor}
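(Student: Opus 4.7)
My plan is to combine \cref{SuccessorLemma} with the Bagaria--Magidor characterization of \(\omega_1\)-strong compactness, and to use the minimality of \(\kappa\) to eliminate the bad case of the lemma.

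The first step is to verify that \(\kappa^+\) is Fr\'echet. Since \(\kappa^+\) is regular, the Fr\'echet filter on \(\kappa^+\) is \(\kappa^+\)-complete and hence \(\kappa\)-complete; because \(\kappa\) is \(\omega_1\)-strongly compact, this filter extends to a countably complete ultrafilter, which is automatically Fr\'echet uniform. The very same argument applied to an arbitrary regular \(\lambda \geq \kappa\) shows that every regular cardinal \(\lambda \geq \kappa\) is Fr\'echet --- a fact I will need later.

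Next I would apply \cref{SuccessorLemma} with \(\lambda = \kappa\). This gives two alternatives: either \(\kappa\) is Fr\'echet, in which case we are done, or \(\kappa\) is singular and every sufficiently large regular cardinal below \(\kappa\) is Fr\'echet. Combined with the observation of the previous paragraph, the second alternative produces some \(\kappa_0 < \kappa\) such that every regular cardinal \(\mu \geq \kappa_0\) carries a countably complete uniform ultrafilter.

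The main step to finish is ruling out this second alternative, and for this I would appeal to the Bagaria--Magidor theorem \cite{MagidorBagaria} that characterizes the least \(\omega_1\)-strongly compact cardinal as the least \(\mu\) such that every regular \(\lambda \geq \mu\) carries a countably complete uniform ultrafilter. This would hand us an \(\omega_1\)-strongly compact cardinal \(\leq \kappa_0 < \kappa\), contradicting the minimality of \(\kappa\). The potential obstacle here is purely bibliographic: one needs exactly this formulation of the Bagaria--Magidor characterization to be available; granted that, the corollary follows immediately from \cref{SuccessorLemma}, which is presumably why the statement is marked with \(\qed\).
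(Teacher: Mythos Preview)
Your proposal is correct and is precisely the argument the paper has in mind: the corollary is stated immediately after \cref{SuccessorLemma} as a consequence of it, with no further proof given, and your reconstruction---apply \cref{SuccessorLemma} at \(\kappa^+\), then use the Ketonen--style Bagaria--Magidor characterization of the least \(\omega_1\)-strongly compact cardinal to rule out the singular alternative---is the intended one. Your only caveat, that one must cite the appropriate formulation of the Bagaria--Magidor result, is exactly right and is why the paper references \cite{MagidorBagaria} at this point.
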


\begin{qst} Let \(\kappa\) be the least \(\omega_1\)-strongly compact cardinal. Does \(\kappa\) carry \(2^{2^\kappa}\) countably complete Fr\'echet uniform ultrafilters?\end{qst}

\section{Approximating ultrafilters}
In this section we exposit two lemmas that allow us to approximate ultrafilters by smaller ultrafilters.

The first lemma is due to the author, but has likely been discovered by others before him.

\begin{lma}
Suppose \(U\) is an ultrafilter on a set \(X\) and \(Y\subseteq j_U(A)\) is a set. Then there is an ultrafilter \(D\) on \(A^Y\) and an elementary embedding \(k : M_D\to M_U\) with \(j_U = k\circ j_D\) and \(Y\subseteq \textnormal{ran}(k)\).
\begin{proof}
Choose for each \(y\in Y\) a function \(f_y : X\to A\) such that \(y = [f_y]_U\). Let \(g : X\to A^Y\) be defined by \(g(x)(y) = f_y(x)\). One calculates that \[[g]_U(j_U(y)) = j_U(g)([\text{id}]_U)(j_U(y)) = j_U(f_y)([\text{id}]_U) = [f_y]_U\] so letting \(D = f_*(U)\) and \(k : M_D\to M_U\) be the factor embedding, \(A\subseteq \text{ran}(k)\) since \([g]_U\) and \(j_U[Y]\) are contained \(\text{ran}(k)\).
\end{proof}
\end{lma}

\begin{cor}\label{Exponential}
Suppose \(U\) is an ultrafilter and \(\gamma\) is an ordinal. Then there is an ultrafilter \(D\) on \(2^\gamma\) and an elementary embedding \(k : M_D\to M_U\) with \(j_U = k\circ j_D\) and \(\gamma\subseteq \textnormal{ran}(k)\).\qed
\end{cor}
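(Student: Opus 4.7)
The plan is to derive the corollary by a single application of the preceding lemma, followed by a cosmetic reindexing of the base set.

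First I would apply the lemma with \(A = Y = \gamma\). The hypothesis \(Y \subseteq j_U(A)\) is automatic: any ordinal \(\alpha < \gamma\) satisfies \(\alpha < \gamma \leq j_U(\gamma)\), so, identifying ordinals with the set of their predecessors, \(\gamma \subseteq j_U(\gamma) = j_U(A)\). The lemma then produces an ultrafilter \(D'\) on \(\gamma^\gamma\) together with an elementary embedding \(k' : M_{D'} \to M_U\) such that \(j_U = k' \circ j_{D'}\) and \(\gamma \subseteq \textnormal{ran}(k')\).

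Next I would transfer \(D'\) to an ultrafilter on \(2^\gamma\). Assuming \(\gamma\) is infinite, \(|\gamma^\gamma| = |2^\gamma|\), so fix a bijection \(h : \gamma^\gamma \to 2^\gamma\) and set \(D = h_*(D')\). The standard identification \([f]_D \mapsto [f \circ h]_{D'}\) is an isomorphism \(\phi : M_D \to M_{D'}\) satisfying \(\phi \circ j_D = j_{D'}\), so setting \(k = k' \circ \phi\) yields a factor embedding \(k : M_D \to M_U\) with \(j_U = k \circ j_D\); the containment \(\gamma \subseteq \textnormal{ran}(k')\) immediately carries over to \(\gamma \subseteq \textnormal{ran}(k)\). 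The case of finite \(\gamma\) is trivial, since \(2^\gamma\) is then finite and every natural number lies in the range of every elementary embedding.

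The only potential obstacle is purely notational: the lemma delivers an ultrafilter on \(A^Y\), whereas the corollary asks for one on the specific set \(2^\gamma\), which is what forces the bijective reindexing in the second step. All the substantive content is already packaged in the preceding lemma, so no further work is needed.
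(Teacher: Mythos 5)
Your proposal is correct and is exactly the intended derivation: the paper marks the corollary \(\qed\) as an immediate instance of the preceding lemma, and taking \(A=Y=\gamma\) (so that \(Y\subseteq j_U(A)\) holds trivially and the lemma returns an ultrafilter on \(\gamma^\gamma\), a set of size \(2^\gamma\) for infinite \(\gamma\)) followed by transport along a bijection is the obvious instantiation. The finite case is handled correctly as well, so nothing is missing.
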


The second lemma, due to Silver, is much more interesting. To put it in context, we first spell out a correspondence between partitions modulo an ultrafilter and definability over an ultrapower that is implicit in Silver's proof.

\begin{defn}
Suppose \(P\) is a partition of a set \(X\) and \(A\) is a subset of \(X\). Then the restriction of \(P\) to \(A\) is the partition \(P\restriction A\) defined by \[P\restriction A = \{A\cap S : S\in P\text{ and }A\cap S\neq \emptyset\}\]
\end{defn}
\begin{defn}
Suppose \(U\) is an ultrafilter on a set \(X\). Let \(\mathbb Q_U\) be the preorder on the collection of partitions of \(X\) defined by setting \(P\leq Q\) if there exists some \(A\in U\) such that \(Q\restriction A\) refines \(P\restriction A\). Let \(\mathbb Q^*_U\) be the quotient partial order.
\end{defn}
\begin{defn}
Suppose \(U\) is an ultrafilter. Let \(\mathbb P_U\) be the preorder on \(M_U\) defined by setting \(x \leq y\) if \(x\) is definable over \(M\) from \(y\) and parameters in \(j_U[V]\). Let \(\mathbb P_U^*\) be the quotient partial order.
\end{defn}

\begin{lma}\label{Correspondence}
Suppose \(U\) is an ultrafilter on a set \(X\). Then \(\mathbb P^*_U\cong \mathbb Q^*_U\).
\begin{proof}
It suffices to define an order-embedding \(\Phi : \mathbb P_U\to \mathbb Q_U\) that is {\it essentially surjective} in the sense that for any \(x\in \mathbb P_U\) there is some \(P\in \mathbb Q_U\) such that \(x\) and \(\Phi(P)\) are equivalent in \(\mathbb P_U\).

For \(P\in \mathbb Q_U\), let \(\Phi(P)\) be the unique \(S\in j_U(P)\) such that \([\text{id}]_U\in S\). We claim that \(\Phi: \mathbb P_U\to \mathbb Q_U\) is order-preserving and essentially surjective.

Suppose \(P,Q\in \mathbb Q_U\) and \(P\leq Q\). Fix \(A\in U\) such that \(Q\restriction A\) refines \(P\restriction A\). Then \(\Phi(P)\) is definable in \(M_U\) from the parameters \(\Phi(Q), j_U(P), j_U(A)\) as the unique \(S\in j_U(P)\) such that \(\Phi(Q)\cap j_U(A)\subseteq S\cap j_U(A)\). 

Conversely suppose \(\Phi(P) = j_U(f)(\Phi(Q))\) for some \(f: Q\to P\). Let \(A\subseteq X\) consist of those \(x\in X\) such that \(x\in f(S)\) where \(S\) is the unique element of \(Q\) with \(x\in S\). Then \(A\in U\) since \([\text{id}]_U\in j_U(f)(S)\) where \(S = \Phi(Q)\) is the unique \(S\in j_U(Q)\) such that \([\text{id}]_U\in S\). Moreover for any \(S\in Q\), \(S\cap A\subseteq f(S)\cap A\), so \(Q\restriction A\) refines \(P\restriction A\).

We conclude by showing that \(\Phi\) is essentially surjective. Fix \(x\in \mathbb P_U\). In other words, \(x\in  M_U\), so \(x = j_U(f)([\text{id}]_U)\) for some \(f : X\to V\). Let \[P = \{f^{-1}[\{y\}] : y\in \text{ran}(f)\}\] Then \(\Phi(P)\) is interdefinable with \(x\) over \(M_U\) using parameters in \(j_U[V]\): \(\Phi(P)\) is the unique \(S\in j_U(P)\) such that \(x\in j_U(f)[S]\); and since \(j_U(f)[\Phi(P)] = \{x\}\), \(x = \bigcup j_U(f)[\Phi(P)] \).
\end{proof}
\end{lma}

\begin{defn}
Suppose \(U\) is an ultrafilter on \(X\) and \(\lambda\) is a cardinal. Then \(U\) is {\it \(\lambda\)-indecomposable} if every partition of \(X\) into \(\lambda\) pieces is \(U\)-equivalent to a partition of \(X\) into fewer than \(\lambda\) pieces.
\end{defn}

In other words, \(U\) is \(\lambda\)-indecomposable if and only if there is no Fr\'echet uniform ultrafilter \(W\) on \(\lambda\) with \(W\RK U\).

\begin{thm}[Silver]\label{Silver}
Suppose \(\delta\) is a regular cardinal and \(U\) is an ultrafilter on \(X\) that is \(\lambda\)-indecomposable for all \(\lambda\in [\delta,2^\delta]\). Then there is an ultrafilter \(D\) on some \(\gamma < \delta\) and an elementary embedding \(k : M_D\to M_U\) such that \(j_U = k\circ j_D\) and \(j_U((2^{\delta})^+)\subseteq \textnormal{ran}(k)\).
\begin{proof}
If \(\kappa\) is a cardinal, we call a set \(P\) a {\it \({\leq}\kappa\)-partition} of \(X\) if \(P\) is a partition of \(X\) such that \(|P| \leq \kappa\).

We begin by proving the existence of a maximal \({\leq}2^\delta\)-partition \(P\) of \(X\) in the order \(\mathbb Q^*_U\). Thus we will find a partition \(P\) of \(X\) such that for any refinement \(Q\) of \(P\), there is some \(A\in U\) such that \(P\restriction A\) refines \(Q\restriction A\). 

Suppose there is no such \(P\). We construct by recursion a sequence \(\langle P_\alpha : \alpha \leq \delta\rangle\) of \({\leq}2^\delta\)-partitions of \(X\). Let \(P_0 = \{\delta\}\). If \(\alpha < \delta\) and \(P_\alpha\) has been defined, then let \(P_{\alpha+1}\) be a \({\leq}2^\delta\)-partition of \(X\) witnessing that \(P_\alpha\) is not maximal: thus \(P_{\alpha+1}\) refines \(P_\alpha\) but for any \(A\in U\), \(P_{\alpha}\restriction A\) does not refine \(P_{\alpha+1}\restriction A\); since \(P_{\alpha+1}\) refines \(P_\alpha\), this is the same as saying \(P_\alpha\restriction A \neq P_{\alpha+1}\restriction A\) for all \(A\in U\). If \(\gamma\) a nonzero limit ordinal and \(P_\alpha\) is defined for all \(\alpha < \gamma\), let \(P_\gamma\) be the least common refinement of the \(P_\alpha\) for \(\alpha < \gamma\). 

Since \(U\) is \(\lambda\)-indecomposable for all \(\lambda\in [\delta,2^\delta]\), there is some \(A\in U\) such that \(|P_\delta\restriction A| < \delta\). For \(\alpha \leq \delta\), let \(P_\alpha' = P_\alpha\restriction A\). We claim that the \(P_\alpha'\) are eventually constant. Since \(P_\delta'\) is the least common refinement of the \(P_\alpha'\) and \(\delta\) is regular, it is not hard to show there is some \(\alpha < \delta\) with the property that for each \(S\in P_\alpha'\), there is a unique \(S'\in P_\delta'\) with \(S'\subseteq S\). Then since \(\bigcup P_\alpha' = \bigcup P_\delta' = A\), we must in fact have \(S' = S\). So \(P_\alpha' = P_\delta'\), which implies \(P_\beta' = P_\alpha'\) for all \(\beta \geq \alpha\) as claimed.

This is a contradiction since by our choice of \(P_{\alpha+1}\), \(P_{\alpha+1}\restriction A \neq P_\alpha\restriction A\) for all \(A\in U\). Therefore our assumption was false, and there is a maximal \({\leq}2^\delta\)-partition \(P\) of \(X\). By the indecomposability of \(U\), we may assume \(|P| < \delta\).

Let \(D = \{Q\subseteq P : \bigcup Q\in U\}\). Let \(k : M_D\to M_U\) be the factor embedding. By the maximality of \(P\) among \({\leq}2^\delta\)-partitions and the correspondence of \cref{Correspondence}, \(j_U(2^\delta)\subseteq \text{ran}(k)\). Assume towards a contradiction that there is some \(e < j_U((2^\delta)^+)\) with \(e\notin \text{ran}(k)\). Then the ultrafilter derived from \(j_U\) using \(e\) witnesses that \(U\) is \((2^\delta)^+\)-decomposable. By a theorem of Prikry \cite{Prikry}, letting \(\lambda = \text{cf}(2^\delta)\), \(U\) is \(\lambda\)-decomposable. But \(\lambda\in [\delta,2^\delta]\), which is a contradiction.
\end{proof}
\end{thm}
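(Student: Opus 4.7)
The plan is to combine the partition-definability correspondence of \cref{Correspondence} with a carefully bounded transfinite construction that exploits the hypothesized $\lambda$-indecomposability to find a maximal ``small'' partition of $X$, and then appeal to Prikry's theorem to push the range of the factor embedding past $j_U((2^\delta)^+)$.

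First I would look for a partition $P$ of $X$ with $|P|\leq 2^\delta$ that is maximal in $\mathbb{Q}_U^*$ among such partitions. Suppose no such maximum exists. I would recursively construct $\langle P_\alpha : \alpha\leq \delta\rangle$, each a $\leq 2^\delta$-partition, with $P_0$ trivial, $P_{\alpha+1}$ a $\mathbb{Q}_U^*$-strict refinement of $P_\alpha$ (witnessing non-maximality), and $P_\gamma$ the common refinement at limits; note $|P_\delta|\leq (2^\delta)^{|\delta|}=2^\delta$. The hypothesis on $U$ then applies: since $|P_\delta|\in[\delta,2^\delta]$ (or is already small), there is $A\in U$ with $|P_\delta\restriction A|<\delta$. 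Regularity of $\delta$ lets me collapse the chain $P_\alpha\restriction A$ below $\delta$, so the refinements stabilize at some $\alpha<\delta$, contradicting the strict-refinement choice at stage $\alpha+1$. Thus a maximal $P$ exists, and applying indecomposability once more we may take $|P|<\delta$.

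Having secured such a $P$, I would let $D=\{Q\subseteq P:\bigcup Q\in U\}$, an ultrafilter on $P$, and take $k:M_D\to M_U$ to be the factor embedding, so $\gamma:=|P|<\delta$ and $j_U=k\circ j_D$. Under the isomorphism of \cref{Correspondence}, maximality of $P$ in $\mathbb{Q}_U^*$ translates directly to saying that $\Phi(P)$ is $\mathbb{P}_U^*$-maximal among classes represented by $\leq 2^\delta$-partitions; since every element of $j_U(2^\delta)$ is represented by such a partition (via a function $X\to 2^\delta$), this gives $j_U(2^\delta)\subseteq\textnormal{ran}(k)$, and consequently $\textnormal{ran}(k)$ is closed under $M_U$-definable functions with parameters below $j_U(2^\delta)$.

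The last step is to push the inclusion from $j_U(2^\delta)$ up to $j_U((2^\delta)^+)$. Suppose some ordinal $e<j_U((2^\delta)^+)$ lies outside $\textnormal{ran}(k)$. The ultrafilter on $(2^\delta)^+$ derived from $j_U$ using $e$ would then be a Fr\'echet uniform countably complete ultrafilter on $(2^\delta)^+$ Rudin--Keisler below $U$, witnessing that $U$ is $(2^\delta)^+$-decomposable. By Prikry's theorem (already invoked in \cref{SuccessorLemma}), $U$ is therefore $\textnormal{cf}(2^\delta)$-decomposable, but $\textnormal{cf}(2^\delta)\in[\delta,2^\delta]$ contradicts our hypothesis. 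I expect the main obstacle to be the limit stage of the recursion: justifying that the $\leq 2^\delta$-bound is preserved and then using indecomposability plus regularity of $\delta$ to force stabilization of the chain, rather than merely of its $U$-equivalence classes.
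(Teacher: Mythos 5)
Your proposal follows the paper's proof essentially step for step: the same recursive construction of a $\delta$-chain of ${\leq}2^\delta$-partitions, the same use of indecomposability plus regularity of $\delta$ to stabilize the restricted chain and extract a maximal partition of size ${<}\delta$, the same factor ultrafilter $D$ on $P$ with \cref{Correspondence} giving $j_U(2^\delta)\subseteq\textnormal{ran}(k)$, and the same final appeal to Prikry's theorem at $\textnormal{cf}(2^\delta)$. The obstacle you flag at the limit stage is resolved exactly as you suspect: since $P_{\alpha+1}$ refines $P_\alpha$, failure of $U$-equivalence is literally $P_\alpha\restriction A\neq P_{\alpha+1}\restriction A$ for all $A\in U$, so genuine stabilization of the restricted partitions suffices for the contradiction.
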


\section{A weakening of \cref{IsolationConj}}
In this section we prove:

\begin{thm}[UA]\label{UpperBound}
Suppose \(\delta\) is a regular cardinal that is not Fr\'echet. Either \(\delta^\sigma \leq 2^\delta\) or \(\delta^\sigma\) is measurable.
\end{thm}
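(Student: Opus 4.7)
The plan is to assume $\lambda := \delta^\sigma > 2^\delta$ and deduce that $\lambda$ is measurable; fix a countably complete Fr\'echet uniform ultrafilter $U$ on $\lambda$.

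First, I would verify that $U$ is $\mu$-indecomposable for every cardinal $\mu \in [\delta, 2^\delta]$. Otherwise, by the characterization given just before \cref{Silver}, some Fr\'echet uniform countably complete ultrafilter $W$ on $\mu$ would satisfy $W \RK U$, witnessing that $\mu$ is Fr\'echet. But $\mu \in [\delta, 2^\delta] \subseteq [\delta, \lambda)$ cannot be Fr\'echet: $\delta$ is not Fr\'echet by hypothesis, and no cardinal strictly between $\delta$ and $\lambda$ is Fr\'echet by the definition of $\lambda = \delta^\sigma$.

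Next I apply \cref{Silver} to produce a countably complete ultrafilter $D$ on some $\gamma < \delta$ and an elementary embedding $k : M_D \to M_U$ satisfying $j_U = k \circ j_D$ and $j_U((2^\delta)^+) \subseteq \textnormal{ran}(k)$. Order-preservation of $k$ then forces $\text{crit}(k) \geq j_D((2^\delta)^+)$. My aim is to establish $\text{crit}(k) = j_D(\lambda)$, for then by elementarity of $j_D$ the cardinal $\lambda$ is measurable in $V$. The upper bound $\text{crit}(k) \leq j_D(\lambda)$ uses discontinuity of $j_U$ at $\lambda$: since $U$ is Fr\'echet (and hence tail) uniform, $[\text{id}]_U \geq \sup j_U[\lambda]$ while $[\text{id}]_U < j_U(\lambda)$, so $\sup j_U[\lambda] < j_U(\lambda)$. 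If instead $\text{crit}(k) > j_D(\lambda)$, then $k$ fixes every ordinal ${\leq}j_D(\lambda)$; assuming $\text{cf}(\lambda) > \gamma$ (immediate when $\lambda$ is regular; the case $\text{cf}(\lambda) \leq \gamma$ would require a separate refinement, perhaps via \cref{Exponential} to shrink $\gamma$), one obtains $\sup j_U[\lambda] = \sup j_D[\lambda] = j_D(\lambda) = j_U(\lambda)$, a contradiction.

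The main obstacle is the lower bound $\text{crit}(k) \geq j_D(\lambda)$, where UA must enter essentially. Here I would appeal to the UA-based fact that $k$ is an internal ultrapower of $M_D$---a structural consequence developed in \cite{MO} and \cite{SC}---so that the normal ultrafilter on $\text{crit}(k)$ derived from $k$ actually lies in $M_D$. Then $\text{crit}(k)$ is measurable in $M_D$, and since $\text{crit}(k) \in [j_D((2^\delta)^+), j_D(\lambda)]$, elementarity of $j_D$ yields a measurable cardinal $\nu$ in $V$ with $(2^\delta)^+ \leq \nu \leq \lambda$. Such $\nu$ is Fr\'echet and strictly greater than $\delta$, so minimality of $\lambda$ as the least Fr\'echet cardinal $>\delta$ forces $\nu = \lambda$, contradicting the assumption that $\lambda$ is non-measurable.
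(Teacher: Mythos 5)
Your setup (establishing $\mu$-indecomposability on $[\delta,2^\delta]$ and invoking \cref{Silver}) matches the paper, but the central step of your argument has a genuine gap: you assert that the Silver factor embedding $k : M_D\to M_U$ is an \emph{internal} ultrapower embedding of $M_D$, attributing this to \cite{MO} and \cite{SC}. Nothing in the paper provides this, and it is not a routine consequence of UA: in general a factor embedding of an ultrapower need not be internal to its domain, and the paper's machinery (the translation functions, \cref{InternalTranslation}) gives internality in the \emph{opposite} direction. Indeed, the paper's actual proof takes $U = U_\lambda$ (the $\sE$-least Fr\'echet uniform ultrafilter on $\lambda$, not an arbitrary one as you do --- this choice matters) precisely so that \cref{SuccessorInternal} yields $D\I U$ for all $D\in \Un_{<\delta}$. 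Combined with Silver's conclusion that $j_D\restriction 2^\delta = j_U\restriction 2^\delta$, this puts $j_U\restriction 2^\delta = j_D\restriction 2^\delta$ \emph{into} $M_U$, so $U$ is $2^\delta$-supercompact; the Kunen inconsistency dichotomy (either $j_U$ is discontinuous at every regular cardinal in $[\delta,2^\delta]$, which is ruled out since no cardinal there is Fr\'echet, or $U$ is $2^\delta$-complete) then shows $U$ is $2^\delta$-complete, and its critical point is a measurable, hence Fr\'echet, cardinal $>\delta$, forcing it to equal $\lambda$. So the measurability of $\lambda$ is extracted from the completeness of $U$ itself, not from the critical point of $k$; your route would need a separate (and nontrivial) UA argument that $k$ is internal to $M_D$ before it could be accepted.

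A secondary, self-acknowledged gap is the case $\text{cf}(\lambda)\leq\gamma$ in your upper bound $\text{crit}(k)\leq j_D(\lambda)$. Since $\lambda=\delta^\sigma>2^\delta$ is a limit cardinal by \cref{SigmaSuccessor} and could a priori be singular (this is exactly the ``isolated cardinal'' difficulty the paper wrestles with), you cannot simply assume $j_D$ is continuous at $\lambda$. This particular gap is repairable without any cofinality hypothesis: if $k$ fixed every ordinal $\leq j_D(\lambda)$, then $[\text{id}]_U< j_U(\lambda)=j_D(\lambda)$ would lie in $\text{ran}(k)$, so $k$ would be surjective and hence the identity, exhibiting $U$ as the pushforward of an ultrafilter on $\gamma<\lambda$ and contradicting Fr\'echet uniformity. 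But as written your argument leaves this case open, on top of the more serious internality issue above.
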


For the proof, we introduce some definitions we will use throughout this paper.

\begin{defn} Suppose \(U\) is a countably complete ultrafilter and \(W'\in (\Un_{\gamma'})^{M_U}\) for some ordinal \(\gamma'\). Then \(U^-(W') = \{X\subseteq \gamma : j_U(X)\cap \gamma'\in W'\}\) where \(\gamma\) is the least ordinal such that \(j_U(\gamma)\geq \gamma'\).\end{defn}

We briefly recall the definition of the Ketonen order.
\begin{defn}
For \(U,W\in \Un\), we set \(W\sE U\) if setting \(\alpha = [\text{id}]_U\), there is some \(W'\in\Un^{M_U}_{\leq\alpha}\) with \(W = U^-(W')\).
\end{defn}

Proofs of the following theorems appear in \cite{SO}.

\begin{thm}
The Ketonen order is a strict wellfounded partial order of \(\Un\).\qed
\end{thm}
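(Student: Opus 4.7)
The plan is to establish irreflexivity, transitivity, and wellfoundedness of \(\sE\) in turn, all by exploiting a single structural lemma about the operation \(U^-\).

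The first step is to prove the key factor embedding: if \(W = U^-(W')\) for some \(W' \in \Un^{M_U}_{\leq [\text{id}]_U}\), then \(W\) is exactly the ultrafilter derived from the composed embedding \(j_{W'}^{M_U} \circ j_U\) using the seed \([\text{id}]_{W'}^{M_U}\). This is a direct unpacking of the definition \(U^-(W') = \{X \subseteq \gamma : j_U(X) \cap \textsc{sp}(W') \in W'\}\). As a consequence, there is a canonical factor embedding \(k : M_W \to \text{Ult}(M_U, W')\) satisfying \(k \circ j_W = j_{W'}^{M_U} \circ j_U\) and \(k([\text{id}]_W) = [\text{id}]_{W'}^{M_U}\). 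I would prove this before anything else, as all three order properties rest on it.

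For transitivity, given \(V \sE W\) witnessed by \(V' \in \Un^{M_W}_{\leq [\text{id}]_W}\) and \(W \sE U\) witnessed by \(W' \in \Un^{M_U}_{\leq [\text{id}]_U}\), one pushes \(V'\) along the factor embedding \(k : M_W \to \text{Ult}(M_U, W')\) to obtain an element of \(\text{Ult}(M_U, W')\). Since \(\text{Ult}(M_U, W')\) is the ultrapower of \(M_U\) by an ultrafilter lying in \(M_U\), this element is represented by a function in \(M_U\), and interpreting that function yields a uniform countably complete ultrafilter \(V'' \in \Un^{M_U}_{\leq [\text{id}]_U}\). A routine check using the factor identity confirms \(V = U^-(V'')\), giving \(V \sE U\). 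Irreflexivity then falls out either as a special case of the wellfoundedness argument applied to a constant sequence, or directly from the observation that \(W = W^-(W')\) would force \([\text{id}]_W\) to equal a strictly smaller ordinal under \(k\).

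The main obstacle is wellfoundedness. Assume toward contradiction that \(U_{n+1} \sE U_n\) for all \(n < \omega\), witnessed by \(W_n' \in \Un^{M_{U_n}}_{\leq [\text{id}]_{U_n}}\). Iterating the factor lemma along the chain, one constructs inside the iterated ultrapowers starting from \(M_{U_0}\) a sequence of ultrafilters \(X_n\) translating each \(U_n\), together with compatible embeddings between the corresponding models. The crucial property, forced by the identity \(k([\text{id}]_W) = [\text{id}]_{W'}^{M_U}\), is that the associated ``seed ordinals'' \([\text{id}]_{X_n}\) form a strictly descending sequence inside a single model, contradicting the wellfoundedness of the ordinals there. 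The bookkeeping of this internalization --- keeping the factor embeddings coherent so that the seeds line up in one fixed model --- is the real technical work; once it is set up correctly, the contradiction is immediate.
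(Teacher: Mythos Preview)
The paper does not prove this theorem: the sentence preceding it reads ``Proofs of the following theorems appear in \cite{SO},'' and the statement is closed with a bare \qed. There is therefore no in-paper argument to compare against, so I will assess your outline on its own terms.

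Your sketch is essentially the standard proof and is largely correct, but two points need tightening.

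Your ``direct'' route to irreflexivity does not work as written. From \(W = W^-(W')\) the factor lemma gives \(k : M_W \to \text{Ult}(M_W,W')\) with \(k([\text{id}]_W) = [\text{id}]_{W'}^{M_W}\), but \(k\) lands in \(\text{Ult}(M_W,W')\), not in \(M_W\) itself; the only inequality available is \([\text{id}]_{W'}^{M_W} < j_{W'}^{M_W}(\textsc{sp}(W')) \leq j_{W'}^{M_W}([\text{id}]_W)\), which is no contradiction. Your alternative --- deducing irreflexivity from wellfoundedness applied to the constant chain --- is the correct one.

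For wellfoundedness, the seeds you construct do not literally sit in one model. Iterating the factor lemma produces models \(N_n\), internal ultrapower embeddings \(j_{n,n+1} : N_n \to N_{n+1}\), and seeds \(a_n \in N_n\) satisfying \(a_{n+1} < j_{n,n+1}(a_n)\). To obtain an actual descending sequence of ordinals you must pass to the direct limit \(N_\omega\), where the images \(j_{n,\omega}(a_n)\) are strictly decreasing. This requires \(N_\omega\) to be wellfounded, and that is exactly where countable completeness enters: each \(N_n\) is closed under \(\omega\)-sequences (by induction, since the iteration starts at \(V\) and each step uses an ultrafilter that is countably complete in \(N_n\), hence genuinely countably complete), so the standard argument for wellfoundedness of countable-length iterations by countably complete ultrafilters applies. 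You should make this step explicit rather than burying it in the phrase ``a single model.''
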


\begin{thm}
The following are equivalent:
\begin{enumerate}[(1)]
\item The Ketonen order is linear
\item The Ultrapower Axiom holds.\qed
\end{enumerate}
\end{thm}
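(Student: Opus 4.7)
My plan is to prove the two implications separately, leveraging the wellfoundedness of the Ketonen order that was just established. The core observation is that the relation $W\sE U$ is, by its very definition, already a partial instance of the factorization structure required by UA: any witness $W'\in \Un_{\leq [\text{id}]_U}^{M_U}$ with $W = U^-(W')$ supplies a factor embedding $k : M_W \to M_{W'}^{M_U}$ with $k\circ j_W = j_{W'}^{M_U}\circ j_U$. The two directions amount to converting between such one-sided factorizations and full UA comparison squares.

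For (1) $\Rightarrow$ (2), given $U_0, U_1\in \Un$, linearity yields a trichotomy, and up to symmetry I may assume $U_1\sE U_0$ (the case $U_0 = U_1$ is handled by a pair of principal ultrafilters). I fix a witness $W_0\in \Un_{\leq [\text{id}]_{U_0}}^{M_{U_0}}$ with $U_1 = U_0^-(W_0)$ and the corresponding factor embedding $k : M_{U_1}\to M_{W_0}^{M_{U_0}}$. The central step is to realize $k$ as an internal ultrapower embedding $j_{W_1}^{M_{U_1}}$ for some $W_1\in M_{U_1}$; this uses that $M_{W_0}^{M_{U_0}}$ is generated over $\textnormal{ran}(k\circ j_{U_1})$ by the single seed $[\text{id}]_{W_0}^{M_{U_0}}$, so $k$ is an ultrapower embedding of $M_{U_1}$ by the ultrafilter in $M_{U_1}$ derived from the preimage of that seed. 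The pair $(W_0,W_1)$ then witnesses UA for $U_0, U_1$.

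For (2) $\Rightarrow$ (1), given distinct $U_0, U_1$, UA produces internal $W_0\in M_{U_0}$ and $W_1\in M_{U_1}$ with $j_{W_0}^{M_{U_0}}\circ j_{U_0} = j_{W_1}^{M_{U_1}}\circ j_{U_1}$. This pair need not yield Ketonen comparability directly, so the strategy is to minimize: invoking elementarily the wellfoundedness of $\sE$ inside $M_{U_0}$, I select a comparison pair in which $W_0$ is $\sE$-minimal in $M_{U_0}$. A derivation-style translation argument should then force one of $W_0, W_1$ to be principal, since two nonprincipal ultrafilters participating in a UA comparison can always be cut down to a strictly smaller pair by extracting a common refinement on each side. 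If $W_1$ is principal, then $j_{W_1}^{M_{U_1}}$ is essentially the identity and $j_{U_1} = j_{W_0}^{M_{U_0}}\circ j_{U_0}$, yielding $U_1\sE U_0$ (or $U_1 = U_0$); the case $W_0$ principal is symmetric. The main obstacle is this minimization step --- verifying that non-principality on both sides always admits a strict $\sE$-reduction --- which is where the combinatorial content of the equivalence concentrates, and which is worked out using the derivation machinery from \cite{SO}.
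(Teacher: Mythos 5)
The paper itself does not prove this theorem --- it explicitly defers to \cite{SO} --- so your attempt must be measured against that argument, and it falls short in both directions, in each case at exactly the point where the real content lies. In (1) \(\Rightarrow\) (2), your ``central step'' is asserted rather than proved, and the assertion behind it is false as stated. The factor embedding \(k : M_{U_1}\to M^{M_{U_0}}_{W_0}\) sends \([\text{id}]_{U_1}\) to \([\text{id}]^{M_{U_0}}_{W_0}\), so that seed already lies in \(\textnormal{ran}(k)\); the target is generated over \(\textnormal{ran}(k)\) by the \emph{other} seed \(j^{M_{U_0}}_{W_0}([\text{id}]_{U_0})\) (over \(\textnormal{ran}(k\circ j_{U_1})\) alone you need both seeds, and with just \([\text{id}]^{M_{U_0}}_{W_0}\) you recover exactly \(\textnormal{ran}(k)\), which would wrongly make \(k\) an isomorphism). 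One can repair this: deriving an \(M_{U_1}\)-ultrafilter \(Z\) from \(k\) using \(j^{M_{U_0}}_{W_0}([\text{id}]_{U_0})\) does exhibit \(k\) as an ultrapower embedding \emph{of} \(M_{U_1}\). But UA demands \(Z\in M_{U_1}\), and nothing in the definition of \(\sE\) supplies this: the Ketonen order is precisely the one-sided relation in which only the \(M_{U_0}\)-side embedding is internal, and upgrading the external side to an internal one is the entire content of the implication. If internality of the derived ultrafilter were automatic, (1) \(\Rightarrow\) (2) would be a triviality; in fact the proof in \cite{SO} requires a genuine induction along the wellfounded order \(\sE\). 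Your proposal simply assumes the crux.

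In (2) \(\Rightarrow\) (1), your minimization scheme is both unnecessary and unsound. The claim that a UA comparison of distinct ultrafilters ``can always be cut down'' until one side is principal is false in general: a comparison of two distinct normal ultrafilters has nonprincipal ultrafilters on both sides, and no comparison of them has a principal side, since \(j^{M_{U_1}}_{U_0}\circ j_{U_1}\neq j_{U_0}\). Moreover, even granting a principal \(W_1\), the bare factorization \(j_{U_1} = j^{M_{U_0}}_{W_0}\circ j_{U_0}\) does not yield \(U_1\sE U_0\): the definition requires the witnessing ultrafilter to lie in \(\Un^{M_{U_0}}_{\leq [\text{id}]_{U_0}}\), i.e.\ one must locate the relevant seed below \(j^{M_{U_0}}_{W_0}([\text{id}]_{U_0})\), which your argument never addresses. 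The correct route is direct and short: given a comparison \((i_0,i_1) : (M_{U_0},M_{U_1})\to N\) with \(i_0\circ j_{U_0} = i_1\circ j_{U_1}\), compare the ordinals \(i_0([\text{id}]_{U_0})\) and \(i_1([\text{id}]_{U_1})\) in \(N\). If \(i_1([\text{id}]_{U_1}) < i_0([\text{id}]_{U_0})\), then the \(M_{U_0}\)-ultrafilter derived from the \emph{internal} embedding \(i_0\) using the ordinal \(i_1([\text{id}]_{U_1})\) belongs to \(\Un^{M_{U_0}}_{\leq[\text{id}]_{U_0}}\), and one checks via \(i_0\circ j_{U_0} = i_1\circ j_{U_1}\) that its \(U_0^-\)-image is \(U_1\), so \(U_1\sE U_0\); symmetrically in the other case; and equality of the seeds forces \(U_0 = U_1\). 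So the easy direction is easier than you made it, and the hard direction is harder: as it stands the proposal proves neither.
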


\begin{defn}
If \(\lambda\) is a Fr\'echet cardinal, then \(U_\lambda\) denotes the \(\sE\)-least Fr\'echet uniform countably complete ultrafilter on \(\lambda\).
\end{defn}

The internal relation is a minor variant of the generalized Mitchell order on countably complete ultrafilters.

\begin{defn}
The internal relation \(\I\) is defined on countably complete ultrafilters \(U,W\) by setting \(U\I W\) if \(j_U\restriction M_W\) is an internal ultrapower embedding of \(M_W\).
\end{defn}

We will use the following lemma from \cite{IR}:
\begin{defn}
Suppose \(U,W\in \Un\). Then \(t_U(W)\) denotes the \(\sE\)-least ultrafilter \(W'\in \Un^{M_U}\) such that \(U^-(W') = W\). The function \(t_U :\Un \to \Un^{M_U}\) is called the {\it translation function} associated to \(U\).
\end{defn}

Note that for any \(U,W\in \Un\), \(U^-(j_U(W)) = W\) and so \(t_U(W) \E j_U(W)\). Equality holds if and only if \(U\I W\):

\begin{lma}[UA]\label{InternalTranslation}
For any \(U,W\in \Un\), \(U\I W\) if and only if \(t_U(W) = j_U(W)\).\qed
\end{lma}

The following simple lemma will be refined in \cref{IsolationTheorem} using a much harder argument:

\begin{lma}[UA]\label{SuccessorInternal}
Suppose \(\gamma\) is an ordinal and \(\lambda = \gamma^\sigma\). Then for all \(D\in \Un_{<\lambda}\), \(D\I U_\lambda\).
\begin{proof}
Let \(U = U_\lambda\). It is not hard to see that \(U\) is the \(\sE\)-least countably complete uniform ultrafilter that is not isomorphic to an ultrafilter in \(\Un_{{\leq}\gamma}\). Therefore to prove the lemma, it suffices to show that for all \(D\in \Un_{\leq\gamma}\), \(D\I U\). (This is not really necessary, but it simplifies notation.)

Therefore fix \(D\in \Un_{\leq \gamma}\), and we will show \(D\I U\). Let \(U' = t_D(U)\). It suffices by \cref{InternalTranslation} to show that \(U' = j_D(U)\). Since \(U'\E j_D(U)\), we just need to show that \(U' \not \sE j_D(U)\). 

Assume towards a contradiction that \(U'\sE j_D(U)\). In \(M_D\), \(j_D(U)\) is the \(\sE\)-least countably complete uniform ultrafilter that is not isomorphic to an ultrafilter in \(\Un^{M_D}_{{\leq} j_D(\gamma)}\). Therefore \(U'\) is isomorphic to some \(U''\in \Un^{M_D}_{\leq j_D(\gamma)}\). It follows that there is some \(Z\in \Un_{\leq\gamma}\) such that \(j_Z = j^{M_D}_{U''}\circ j_D = j^{M_D}_{U'}\circ j_D\). (That is, \(Z\) is isomorphic to the sum of \(D\) with \(U'\).)

Since \(D^-(U') = U\), there is an elementary embedding \(k : M_U\to M^{M_D}_{U'}\) such that \(k\circ j_U = j^{M_D}_{U'}\circ j_D\), defined by \(k(j_U(f)([\text{id}]_U)) = j_{U'}^{M_D}(j_D(f))([\text{id}]^{M_D}_{U'})\). It follows that \(U\RK Z\). But a Fr\'echet uniform ultrafilter on \(\lambda\) cannot lie below an ultrafilter on \(\gamma < \lambda\) in the Rudin-Keisler order. (Fix a function \(f : \gamma\to \lambda\) such that \(U = f_*(Z)\); then \(f[\gamma]\in U\), contradicting that \(U\) is Fr\'echet uniform.) This is a contradiction, so the assumption that \(U' \sE j_D(U)\) was false, completing the proof.
\end{proof}
\end{lma}

\cref{UpperBound} will be an immediate consequence of the following lemma:

\begin{lma}\label{UpperBoundLemma}
Suppose \(\delta\) is a regular cardinal that is not Fr\'echet. Let \(\lambda = \delta^\sigma\) and assume \(2^\delta < \lambda\). Suppose \(U\) is a countably complete ultrafilter such that \(D\I U\) for all \(D\in \Un_{<\delta}\). Then \(U\) is \(\lambda\)-complete.
\begin{proof}
There are no Fr\'echet cardinals in the interval \([\delta,2^\delta]\), so in particular \(U\) is \(\lambda\)-indecomposable for all cardinals \(\lambda\in [\delta,2^\delta]\). \cref{Silver} yields some \(D\in \Un_{<\delta}\) such that \(j_D\restriction 2^\delta = j_U\restriction 2^\delta\). Since \(D\I U\), \( j_U\restriction 2^\delta = j_D\restriction 2^\delta\in M_U\). Thus \(U\) is \(2^\delta\)-supercompact. 

As a consequence of the Kunen inconsistency theorem \cite{Kunen}, either \(j_U\) is discontinuous at every regular cardinal in \([\delta,2^\delta]\) or \(U\) is \(2^\delta\)-complete.  The former cannot hold since there are no Fr\'echet cardinals in the interval \([\delta,2^\delta]\). Therefore \(U\) is \(2^\delta\)-complete. Let \(\kappa\) be the completeness of \(U\). Since \(\kappa\) is measurable, \(\kappa\) is Fr\'echet, and so since \(\kappa > \delta\), \(\kappa \geq \delta^\sigma = \lambda\). Thus \(U\) is \(\lambda\)-complete.
\end{proof}
\end{lma}

\begin{proof}[Proof of \cref{UpperBound}]
Let \(\lambda = \delta^\sigma\) and assume that \(\lambda > 2^\delta\). Let \(U = U_\lambda\). By \cref{SuccessorInternal}, \(D\I U\) for all \(D\in \Un_{<\delta}\). By \cref{UpperBoundLemma}, \(U\) is \(\lambda\)-complete. Therefore \(U\) is a \(\lambda\)-complete ultrafilter on \(\lambda\), so \(\lambda\) is measurable.
\end{proof}

\section{Nonisolation lemmas}
\begin{lma}[UA]\label{Nonisolation}
Suppose \(\lambda\) is a limit cardinal. Suppose there is some \(W\in \Un\) such that \(j_W\) is discontinuous at \(\lambda\) and \(U_\lambda\I W\). Then \(\lambda\) is a limit of Fr\'echet cardinals.
\end{lma}

For the proof, we need another fact about the internal relation, a sort of dual to \cref{InternalTranslation}.

\begin{defn}
Suppose \(U\in \Un_\lambda\) and \(W\) is a countably complete ultrafilter. Then \(s_W(U) = \{X\subseteq \lambda_* : X\in M_U\text{ and }j_W^{-1}[X]\in U\}\) where \(\lambda_* = \sup j_W[\lambda]\).
\end{defn}

\begin{lma}
If \(U,W\in \Un\) then \(j^{M_W}_{s_W(U)} = j_U\restriction M_W\).\qed
\end{lma}

\begin{cor}
If \(U,W\in \Un\), \(U\I W\) if and only if \(s_W(U)\in M_W\).\qed
\end{cor}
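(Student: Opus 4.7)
The plan is to extract both directions directly from the preceding lemma together with the defining property of internal ultrapower embeddings: an ultrapower embedding of \(M_W\) is internal precisely when the ultrafilter producing it lies in \(M_W\).

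For the reverse direction, I would assume \(s_W(U)\in M_W\). The preceding lemma exhibits \(j_U\restriction M_W\) as \(j^{M_W}_{s_W(U)}\), which is then an ultrapower of \(M_W\) by one of its own ultrafilters and hence internal. So \(U\I W\) by definition of the internal relation.

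For the forward direction, I would assume \(U\I W\), so there is some \(Z\in M_W\) with \(j_U\restriction M_W = j^{M_W}_Z\). Combined with the preceding lemma, both \(Z\) and \(s_W(U)\) present the same ultrapower embedding of \(M_W\); by uniqueness of the ultrafilter derived from such an embedding at its natural seed, they must coincide. To pin this down, I would unwind the definition of \(s_W(U)\): the condition \(j_W^{-1}[X]\in U\) rewrites as \(j_U(j_W)([\textnormal{id}]_U)\in j_U(X)\), presenting \(s_W(U)\) as the ultrafilter derived from \(j_U\restriction M_W\) at the seed \(j_U(j_W)([\textnormal{id}]_U)\). The same seed recovers \(Z\) from \(j^{M_W}_Z\). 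Hence \(s_W(U) = Z\in M_W\).

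The only step requiring any real attention is the identification of \(s_W(U)\) with the ultrafilter derived from \(j_U\restriction M_W\) at its canonical seed—a short definitional unravelling. Once this is in hand, both directions are immediate corollaries of the preceding lemma, which is why the result is flagged as a corollary rather than as a theorem in its own right.
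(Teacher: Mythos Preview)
Your overall strategy is right and matches the paper's intent (the paper gives no proof beyond \qed, so everything is meant to fall out of the preceding lemma together with the definition of \(\I\)). The reverse direction is fine. In the forward direction, however, the sentence ``The same seed recovers \(Z\) from \(j^{M_W}_Z\). Hence \(s_W(U) = Z\in M_W\)'' is not justified and is generally false: \(Z\) is \emph{any} ultrafilter in \(M_W\) giving the embedding \(j_U\restriction M_W\), and it need not be the one derived at the seed \(j_U(j_W)([\textnormal{id}]_U)\). Different Rudin--Keisler equivalent ultrafilters in \(M_W\) yield the same embedding, so you cannot conclude \(s_W(U)=Z\).

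The fix is immediate and you have already done the work for it. You correctly identified \(s_W(U)\) as the \(M_W\)-ultrafilter derived from \(j_U\restriction M_W\) at the ordinal seed \(a = j_U(j_W\restriction \lambda)([\textnormal{id}]_U)\). Now simply observe: since \(U\I W\), the embedding \(j_U\restriction M_W\) is definable over \(M_W\) (that is what ``internal'' means), and the seed \(a\) is an ordinal and hence in \(M_W\). Therefore the derived ultrafilter \(\{X\in P(\lambda_*)^{M_W} : a\in (j_U\restriction M_W)(X)\}\) is computable inside \(M_W\), so \(s_W(U)\in M_W\). You do not need to identify \(s_W(U)\) with your chosen witness \(Z\) at all.
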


We also need a standard fact about covering in ultrapowers.

\begin{lma}\label{jCover}
Suppose \(j :  V\to M\) is an ultrapower embedding. Suppose \(\lambda\) is a cardinal and \(\lambda'\) is an \(M\)-cardinal. Assume there is a set \(X\subseteq M\) with \(|X| = \lambda\) such that \(j[X]\subseteq X'\) for some \(X'\in M\) such that \(|X'|^M \leq \lambda'\). Then any set \(A\subseteq M\) with \(|A| = \lambda\) is contained in a set \(A' \in M\) such that \(|A'|^M\leq \lambda'\).
\begin{proof}
Fix \(a\in M\) such that every element of \(M\) is of the form \(j(f)(a)\) for some function \(f\). Choose functions \(\langle f_x : x \in X\rangle\) such that \(A = \{j(f_x)(a) : x\in X\}\). Let \(\langle g_x : x\in j(X)\rangle = j(\langle f_x : x \in X\rangle)\). Let \(A' = \{g_x(a) : x\in X'\}\). Easily \(A'\in M\), \(|A'|^M \leq |X'|^M \leq \lambda'\), and \(A\subseteq A'\), as desired.
\end{proof}
\end{lma}

\begin{proof}[Proof of \cref{Nonisolation}]
Let \(U = U_\lambda\). Let \(\lambda' = \sup j_W[\lambda]\) and let \(U' = s_W(U)\).
\begin{case}
\(U'\) is Fr\'echet uniform in \(M_W\).
\end{case}
In this case \(U'\) witnesses that \(\lambda'\) is Fr\'echet in \(M_W\). Since \(j_W\) is discontinuous at \(\lambda\), a simple reflection argument implies that \(\lambda\) is a limit of Fr\'echet cardinals.
\begin{case}
\(U'\) is not Fr\'echet uniform in \(M_W\).
\end{case}
Fix \(X'\in U'\) and \(\gamma < \lambda\) such that \(|X|^{M_W} < j_W(\gamma)\). Let \(X = j_W^{-1}[X']\). Then \(X\in U\), so since \(U\) is Fr\'echet uniform, \(|X| = \lambda\). But \(j_W[X]\subseteq X'\). It follows from \cref{jCover} that every set \(A\subseteq M_W\) with \(|A| \leq \lambda\) is covered by a set \(A'\in M_W\) with \(|A'|^{M_W} \leq \lambda'\).

It follows that \(j_W\) is discontinuous at every regular cardinal in the interval \([\gamma,\lambda]\). (This is a standard consequence of \((\gamma,\lambda)\)-regularity, which is what we have established. If \(\delta\in [\gamma,\lambda]\) is a regular cardinal, then \(\text{cf}^{M_W}(\sup j_W[\delta]) < j_W(\gamma)\) by the covering property of \(M_W\), so \(j_W(\delta)\), being regular in \(M_W\), is not equal to \(\sup j_W[\delta]\).) Therefore \(\lambda\) is a limit of Fr\'echet cardinals.
\end{proof}

As a corollary of the proof we also have the following fact:
\begin{lma}[UA]\label{Nonisolation2}
Suppose \(\lambda\) is a Fr\'echet limit cardinal and for some countably complete \(W\), \(U_\lambda \I W\) and \(W\not \I U_\lambda\). Then \(\lambda\) is a limit of Fr\'echet cardinals.
\begin{proof}
Let \(U = U_\lambda\). We may assume that \(j_W\) is continuous at \(\lambda\). 
\begin{clm}
\(t_W(U)\) is not Fr\'echet uniform in \(M_W\).
\end{clm}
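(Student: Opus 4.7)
The plan is to argue by contradiction, using \cref{InternalTranslation} to convert the hypothesis $W\not\I U$ into the strict inequality $t_W(U)\sE j_W(U)$ in $M_W$, and then derive a contradiction from the $\sE^{M_W}$-minimality of the distinguished Fr\'echet uniform ultrafilter on $j_W(\lambda)$ in $M_W$.

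Suppose toward contradiction that $t_W(U)$ is Fr\'echet uniform in $M_W$. The first step is to pin down the support of $t_W(U)$. By definition of the translation function, $W^-(t_W(U)) = U$. Unfolding the definition of $W^-$: if $\gamma' = \textsc{sp}(t_W(U))$ and $\gamma$ is the least ordinal with $j_W(\gamma) \geq \gamma'$, then $U$ lives on $\gamma$, forcing $\gamma = \lambda$. This in turn yields $\gamma' \in [\sup j_W[\lambda],\, j_W(\lambda)]$. By the reduction made immediately before the claim, $j_W$ is continuous at $\lambda$, so $\sup j_W[\lambda] = j_W(\lambda)$ and therefore $\textsc{sp}(t_W(U)) = j_W(\lambda)$.

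Now I apply minimality. Since $\lambda$ is Fr\'echet and $U = U_\lambda$, elementarity gives that $j_W(U)$ is, in $M_W$, the $\sE$-least Fr\'echet uniform countably complete ultrafilter on $j_W(\lambda)$. Under the standing assumption that $t_W(U)$ is Fr\'echet uniform in $M_W$ and lives on $j_W(\lambda)$, this minimality forces $j_W(U)\E t_W(U)$ in $M_W$. On the other hand, the general inequality $t_W(U)\E j_W(U)$ always holds, since $W^-(j_W(U)) = U$ and $t_W(U)$ was defined as the $\sE^{M_W}$-least ultrafilter with this property. Combining these two inequalities gives $t_W(U) = j_W(U)$, and \cref{InternalTranslation} then yields $W\I U$, contradicting the hypothesis.

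The only real obstacle is the bookkeeping in the first step: correctly extracting $\textsc{sp}(t_W(U)) = j_W(\lambda)$ from the definition of $W^-$ together with the continuity of $j_W$ at $\lambda$. Once this identification is in place, the conclusion is a routine squeeze between the two $\sE^{M_W}$-inequalities followed by a single invocation of \cref{InternalTranslation}.
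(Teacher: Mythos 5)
Your proof is correct and is essentially the paper's own argument: assume Fr\'echet uniformity of \(t_W(U)\), locate its space at \(j_W(\lambda) = \sup j_W[\lambda]\) using the continuity reduction, squeeze \(t_W(U) = j_W(U)\) between the defining inequality \(t_W(U)\E^{M_W} j_W(U)\) and the minimality of \(j_W(U)\) given by elementarity, and conclude \(W\I U\) via \cref{InternalTranslation}, contradicting the hypothesis. The only difference is that you pin down \(\textsc{sp}(t_W(U))\) exactly rather than just bounding it below by \(\sup j_W[\lambda]\), which is a harmless (and slightly cleaner) refinement.
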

\begin{proof} Assume towards a contradiction that \(t_W(U)\) is Fr\'echet uniform in \(M_W\). By the definition of \(t_W(U)\), \(t_W(U) \E^{M_W} j_W(U)\). By elementarity \(j_W(U)\) is the \(\sE^{M_W}\)-least Fr\'echet uniform ultrafilter on \(j_W(\lambda) = \sup j_W[\lambda]\) in \(M_W\). Since \(\sup j_W[\lambda]\leq\textsc{sp}(t_W(U))\), this implies \(j_W(U)\E^{M_W} t_W(U)\). It follows that \(t_W(U) = j_W(U)\). But by \cref{InternalTranslation}, \(W\I U\), contrary to hypothesis.\end{proof}

Therefore there is some \(X'\in t_W(U)\) such that \(|X'|^{M_W} < j_W(\gamma)\) for some \(\gamma < \lambda\). We now proceed as in \cref{Nonisolation} to show all sufficiently large regular cardinals below \(\lambda\) are Fr\'echet.
\end{proof}
\end{lma}

\section{The strong compactness of \(\kappa_{U_\delta}\)}
The main theorem of this section is the key to the supercompactness analysis of \cite{SC}.
\begin{thm}[UA]\label{MainThm1}
If \(\delta\) is a Fr\'echet successor cardinal or a Fr\'echet inaccessible cardinal, then \(\kappa_{U_\delta}\) is \(\delta\)-strongly compact.
\end{thm}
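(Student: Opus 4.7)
Let $U = U_\delta$ and $\kappa = \kappa_U$. The aim is to produce $A \in M_U$ with $j_U[\delta]\subseteq A$ and $|A|^{M_U} < j_U(\kappa)$; by the standard characterization of strong compactness, $j_U$ will then witness that $\kappa$ is $\delta$-strongly compact. By Lemma \ref{jCover} it is enough to cover the $j_U$-image of some single set of size $\delta$, so the real problem reduces to pulling a small cover out of the structure of $U_\delta$.

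My plan is to choose $\gamma < \delta$ so that $\delta = \gamma^\sigma$ and so that some regular $\delta^\star \in (\gamma,\delta)$ has the interval $[\delta^\star, 2^{\delta^\star}]$ free of Fr\'echet cardinals; once this is arranged, Lemma \ref{SuccessorInternal} guarantees that every $D \in \Un_{\leq\gamma}$ satisfies $D \I U$, and Silver's theorem \ref{Silver} applied at $\delta^\star$ yields some $D \in \Un_{<\delta^\star}\subseteq \Un_{\leq\gamma}$ together with a factor embedding $k : M_D \to M_U$ such that $j_U = k\circ j_D$ and $j_U((2^{\delta^\star})^+)\subseteq\mathrm{ran}(k)$. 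Because $D \I U$, the restriction $j_D\restriction M_U$ is an internal ultrapower of $M_U$, so transporting via $k$ the canonical $M_D$-cover of $j_D[\delta]$ yields an $M_U$-set covering $j_U[\delta]$. A final completeness computation, comparing $\kappa_D$ to $\kappa_U$ through the internal relation and the $\sE$-minimality of $U_\delta$, is needed to pin the $M_U$-cardinality of this cover below $j_U(\kappa)$.

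The main obstacle is locating $\gamma$ and $\delta^\star$ satisfying the Fr\'echet-free interval requirement. When $\delta$ is a Fr\'echet successor, this is controlled by Lemma \ref{SuccessorLemma}, splitting on whether the predecessor of $\delta$ is Fr\'echet (note that when $\delta = \mu^+$ with $\mu$ a strong limit, we have ample room below $\mu$; the tight case is $\mu$ regular Fr\'echet or $\mu$ singular with a dense Fr\'echet tail, where $\gamma$ must be pushed well below $\mu$ and one verifies no additional Fr\'echet cardinals intrude). When $\delta$ is Fr\'echet inaccessible, the delicate subcase is $\delta$ being a limit of Fr\'echet cardinals, and here Lemmas \ref{Nonisolation} and \ref{Nonisolation2} are the key tool: they let me reflect any would-be obstructing $W$ back to show either $\delta$ is already well-covered internally, or the problematic configuration is simply impossible. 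Handling both borderline setups uniformly, so that the Silver approximation produces a $D$ with the right critical point for the final count, is where I expect the technical core of the argument to live.
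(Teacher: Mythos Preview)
Your plan hinges on finding \(\gamma < \delta\) with \(\delta = \gamma^\sigma\) together with a regular \(\delta^\star\in(\gamma,\delta)\) at which Silver's theorem can be applied. But in the most basic case this is impossible: if \(\delta = \mu^+\) with \(\mu\) Fr\'echet, then by \cref{SigmaSuccessor} any \(\gamma\) with \(\gamma^\sigma = \delta\) must satisfy \(\gamma^+ = \delta\), i.e.\ \(\gamma = \mu\), and then the interval \((\gamma,\delta)\) contains no cardinals at all. You cannot ``push \(\gamma\) well below \(\mu\)'' without losing \(\delta = \gamma^\sigma\), and hence losing the hypothesis of \cref{SuccessorInternal} that you are relying on. More generally, the paper actually proves (via \cref{SuccessorThm}) that \emph{every} regular cardinal in \([\kappa_{U_\delta},\delta]\) is Fr\'echet, so the Fr\'echet-free regular \(\delta^\star\) you need simply does not exist in the relevant range.

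The paper's route is the opposite of yours. Rather than seeking a Fr\'echet-free interval for a Silver factorization, it establishes that the interval \([\kappa_{U_\delta},\delta]\) is \emph{saturated} with Fr\'echet cardinals, and then invokes a Ketonen-style covering argument (\cref{Ketothing}): since \(\sup j_U[\delta]\) is not tail uniform in \(M_U\), its \(M_U\)-cofinality must fall below \(j_U(\kappa_{U_\delta})\), which immediately gives the required small cover. The one case this does not handle is \(\delta\) strongly inaccessible and isolated; there the paper uses \cref{UpperBound} to show \(\delta\) is measurable and argues that \(U_\delta\) is the normal ultrafilter on \(\delta\), so \(\kappa_{U_\delta} = \delta\). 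Your proposed ``transport via \(k\) of the canonical \(M_D\)-cover of \(j_D[\delta]\)'' is also unexplained: \(D\) lives on an ordinal below \(\delta^\star\), so there is no a priori small cover of \(j_D[\delta]\) in \(M_D\) to transport.
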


The theorem requires a sequence of lemmas. The first is related to the phenomenon of commuting ultrapowers first discovered by Kunen.

\begin{lma}[UA]\label{Commutativity}
Suppose \(U\) and \(W\) are countably complete ultrafilters. The following are equivalent:
\begin{enumerate}[(1)]
\item \(U\I W\) and \(W\I U\).
\item \(j_U(j_W) = j_W\restriction M_U\).
\item \(j_W(j_U) = j_U\restriction M_W\).\qed
\end{enumerate}
\end{lma}

The proof appears in \cite{IR}.

We also need a consequence of \cref{Exponential}.
\begin{lma}[UA]\label{Completeness}
Suppose \(\kappa\) is a strong limit cardinal and \(W\) is a countably complete ultrafilter. The following are equivalent:
\begin{enumerate}[(1)]
\item \(j_W[\kappa]\subseteq \kappa\) and for all \(D\in \Un_{<\kappa}\), \(D\I W\).
\item \(W\) is \(\kappa\)-complete.
\end{enumerate}
\begin{proof}
We will prove (1) implies (2).

We claim \(j_W[\alpha]\in M_W\) for all \(\alpha < \kappa\). Let \(\alpha' = \sup j_W[\alpha]\), and note that \(\alpha' < \kappa\), so \(2^{\alpha'} < \kappa\). By \cref{Exponential}, there is a countably complete ultrafilter \(D\) on \(2^{\alpha'}\) such that \(j_D[\alpha] = j_W[\alpha]\). By assumption \(D\I W\), so \(j_D[\alpha]\in M_W\). Hence \(j_W[\alpha]\in M_W\).

By the Kunen inconsistency theorem there cannot be a \(j : V\to M\) such that \(j[\lambda]\in M\) for some \(\lambda\) with \(j[\lambda]\subseteq\lambda\) and \(\textsc{crt}(j) < \lambda\). Taking \(\lambda = \kappa\) and \(j = j_W\), it follows that \(W\) is \(\kappa\)-complete.
\end{proof}
\end{lma}

This lemma has the following corollary.

\begin{lma}[UA]\label{Noncommutative}
Suppose \(U\) and \(W\) are countably complete ultrafilters such that \(\Un_{<\kappa_U}\I W\) and \(\Un_{<\kappa_W}\I U\). Either \(U\not \I W\) or \(W\not \I U\).
\begin{proof}
Suppose not. Then \(U\I W\) and \(W\I U\). By \cref{Commutativity}, \(j_U(j_W) = j_W\restriction M_U\) and \(j_W(j_U) = j_U\restriction M_W\). In particular, \(j_U(\kappa_W) = \kappa_W\) and \(j_W(\kappa_U) = \kappa_U\). Therefore \(\kappa_U \neq \kappa_W\), and we may assume by symmetry that \(\kappa_U < \kappa_W\). But then \(j_U[\kappa_W]\subseteq \kappa_W\) and \(\Un_{<\kappa_W}\I U\). So \(U\) is \(\kappa_W\)-complete by \cref{Completeness}. But this contradicts that \(\kappa_U < \kappa_W\).
\end{proof}
\end{lma}

\begin{lma}[UA]\label{Nonisolation3}
Suppose \(W\) is a countably complete ultrafilter and \(\nu\) is an isolated cardinal such that \(U_\nu\I W\) and for all \(D\in \Un_{<\nu}\), \(D\I W\). Then \(W\) is \(\nu^+\)-complete.
\end{lma}
\begin{proof}
Suppose towards a contradiction \(\kappa_W\leq \nu\). Then \(\Un_{<\kappa_W}\I U_\nu\) by \cref{SuccessorInternal}. By assumption \(\Un_{<\kappa_U}\I W\). Therefore by \cref{Noncommutative}, \(W\not \I U_\nu\). But by \cref{Nonisolation2}, it follows that \(\nu\) is not isolated, and this is a contradiction.
\end{proof}

\begin{thm}[UA]\label{SuccessorThm}
Suppose \(\lambda\) is a Fr\'echet cardinal that is either regular or isolated. For any \(\gamma\) with \(\kappa_{U_\lambda}\leq \gamma < \lambda\), either \(\gamma^\sigma = \gamma^+\) or \(\gamma^\sigma = \lambda\).
\begin{proof}
Let \(\nu = \gamma^\sigma\). Assume \(\nu < \lambda\). Assume towards a contradiction that \(\nu \neq \gamma^+\). By \cref{SigmaSuccessor}, \(\nu\) is a limit cardinal. So \(\nu\) is isolated.. Since \(\lambda\) is either regular or isolated, we have \(U_\nu \I U_\lambda\) and for all \(D\in \Un_{<\nu}\), \(D\I U_\lambda\). But by \cref{Nonisolation3}, it follows that \(U_\lambda\) is \(\nu^+\)-complete, contradicting that \(\nu \geq \gamma\geq \kappa_{U_\lambda}\). 
\end{proof}
\end{thm}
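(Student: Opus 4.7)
The plan is to suppose for contradiction that $\nu := \gamma^\sigma$ satisfies $\gamma^+ < \nu < \lambda$, and to derive that $U_\lambda$ has completeness strictly greater than $\gamma$, contradicting $\kappa_{U_\lambda}\le\gamma$. The first step is to identify $\nu$ as an isolated cardinal. Since $\gamma^+<\nu$ and $\nu$ is Fr\'echet, \cref{SigmaSuccessor} forces $\nu$ to be a limit cardinal. By the minimality of $\nu=\gamma^\sigma$ no Fr\'echet cardinal lies in the interval $(\gamma,\nu)$; in particular, since $\nu>\gamma$, $\nu$ cannot be a limit of Fr\'echet cardinals. Thus $\nu$ is a Fr\'echet limit cardinal that is not itself a limit of Fr\'echet cardinals, so $\nu$ is isolated, and is eligible to play the role of the isolated cardinal in \cref{Nonisolation3}.

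The second step is to verify the internal-relation hypotheses of \cref{Nonisolation3} applied with $W := U_\lambda$, namely $U_\nu\I U_\lambda$ and $D\I U_\lambda$ for every $D\in\Un_{<\nu}$. This is where the hypothesis that $\lambda$ is regular or isolated is used. In the isolated case, rerunning the reasoning of the previous paragraph for $\lambda$ itself shows that $\lambda$ is a $\sigma$-successor, so \cref{SuccessorInternal} immediately yields $D\I U_\lambda$ for all $D\in\Un_{<\lambda}$, subsuming both required assertions. In the regular case, I would rerun the proof of \cref{SuccessorInternal} essentially verbatim: the real ingredients there are the $\sE$-minimality of $U_\lambda$ among Fr\'echet uniform countably complete ultrafilters on $\lambda$ together with the Rudin--Keisler observation that a Fr\'echet uniform ultrafilter on $\lambda$ cannot be pushed forward from an ultrafilter on a smaller ordinal; both are available whenever $\lambda$ is regular and Fr\'echet, regardless of whether $\lambda$ is literally of the form $\delta'^\sigma$.

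With the internal relations in hand, \cref{Nonisolation3} delivers that $U_\lambda$ is $\nu^+$-complete, whence $\kappa_{U_\lambda}\ge\nu^+>\nu>\gamma$, the desired contradiction. The main obstacle is the regular-$\lambda$ branch of the second step, where one must check that the argument of \cref{SuccessorInternal} really uses only the $\sE$-minimality of $U_\lambda$ on its space and not the specific fact that $\lambda=\delta'^\sigma$; once that is granted, the conclusion follows by straightforward assembly of the lemmas already established.
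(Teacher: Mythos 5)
Your proposal is correct and follows the paper's argument essentially verbatim: identify \(\nu=\gamma^\sigma\) as isolated via \cref{SigmaSuccessor}, verify the internal-relation hypotheses of \cref{Nonisolation3} for \(W=U_\lambda\), and contradict \(\kappa_{U_\lambda}\le\gamma\); the paper simply asserts the internal-relation step, while you supply the justification. The one point you flag as needing verification---that the proof of \cref{SuccessorInternal} generalizes to an arbitrary regular Fr\'echet \(\lambda\) (not literally of the form \(\delta^\sigma\))---does go through, the only additional ingredient being that regularity of \(\lambda\) together with \(\textsc{sp}(D)<\lambda\) makes \(j_D\) continuous at \(\lambda\), so the witness \(U''\) lies in \(\Un^{M_D}_{<j_D(\alpha)}\) for some \(\alpha<\lambda\) and the sum ultrafilter \(Z\) still concentrates on a set of size less than \(\lambda\), yielding the Rudin--Keisler contradiction.
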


For the next theorem we need a fact that is essentially due to Ketonen \cite{Ketonen}.
\begin{lma}[UA]\label{Ketothing}
Suppose \(\delta\) is a regular Fr\'echet cardinal and every regular cardinal in the interval \([\kappa_{U_\delta},\delta]\) is Fr\'echet. Then \(\kappa_{U_\delta}\) is \(\delta\)-strongly compact.
\begin{proof}
Let \(U = U_\delta\). Since \(\sup j_{U}[\delta]\) is not tail uniform in \(M_{U}\), \(\text{cf}^{M_{U}}(\sup j_{U}[\delta])\) is not Fr\'echet in \(M_U\). Therefore we have \(\text{cf}^{M_{U}}(\sup j_{U}[\delta]) < j_{U}(\kappa_{U})\), since otherwise \(\text{cf}^{M_{U}}(\sup j_{U}[\delta])\) is an \(M_{U}\)-regular cardinal in the interval \(j_{U}([\kappa_{U},\delta])\) and hence is Fr\'echet by elementarity. This implies that every set of ordinals of size \(\delta\) is covered by a set of ordinals in \(M_U\) of size less than \(j_U(\kappa_U)\). Hence \(j_U: V\to M_U\) witnesses that \(\kappa_U\) is \(\delta\)-strongly compact.
\end{proof}
\end{lma}

\begin{thm}[UA]\label{StrongCompactness}
Suppose \(\delta\) is Fr\'echet cardinal. Assume \(\delta\) is either a successor cardinal or a regular limit of Fr\'echet cardinals. Then \(\kappa_{U_\delta}\) is \(\delta\)-strongly compact.
\begin{proof}
Fix  \(\gamma\) with \(\kappa_{U_\lambda}\leq \gamma^+ < \delta\). We claim \(\gamma^\sigma < \delta\). If \(\delta\) is a limit of Fr\'echet cardinals this is immediate. Suppose instead that \(\delta = \lambda^+\). By \cref{SuccessorLemma}, either \(\lambda\) is Fr\'echet or \(\lambda\) is a limit of Fr\'echet cardinals. Therefore since \(\gamma < \lambda\), \(\gamma^\sigma \leq \lambda < \delta\), as claimed.

It follows that every successor cardinal in the interval \([\kappa_{U_\delta},\delta]\) is Fr\'echet. But then by Prikry's theorem \cite{Prikry}, every regular cardinal in the interval \([\kappa_{U_\delta},\delta]\) is Fr\'echet. By \cref{Ketothing}, it follows that \(\kappa_{U_\delta}\) is \(\delta\)-strongly compact.
\end{proof}
\end{thm}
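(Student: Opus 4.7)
The plan is to reduce to \cref{Ketothing} by verifying its hypotheses. Note first that in both cases of the theorem, $\delta$ is regular (successor cardinals are regular, and a regular limit of Fréchet cardinals is regular by hypothesis), and $\delta$ is Fréchet by hypothesis. So \cref{Ketothing} will finish the proof provided we can show every regular cardinal in the interval $[\kappa_{U_\delta}, \delta]$ is Fréchet.

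By Prikry's theorem, if $\mu^+$ is Fréchet and $\mu$ is regular then $\mu$ is Fréchet; in particular, if every successor cardinal in $[\kappa_{U_\delta}, \delta]$ is Fréchet, then every inaccessible cardinal in that interval is Fréchet too (apply Prikry with $\lambda = \mu$ where $\mu$ is inaccessible and $\mu^+ \leq \delta$). So it suffices to show that every successor cardinal $\gamma^+$ with $\kappa_{U_\delta} \leq \gamma^+ < \delta$ is Fréchet.

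Here is where \cref{SuccessorThm} enters. Since $\delta$ is a regular Fréchet cardinal, \cref{SuccessorThm} gives the dichotomy $\gamma^\sigma \in \{\gamma^+, \delta\}$ for any $\gamma$ with $\kappa_{U_\delta} \leq \gamma < \delta$. So the key reduction is to rule out $\gamma^\sigma = \delta$, i.e., to show $\gamma^\sigma < \delta$. This is where the case split on the hypothesis on $\delta$ is used. If $\delta$ is a regular limit of Fréchet cardinals, then $\gamma^\sigma < \delta$ is immediate because some Fréchet cardinal lies strictly between $\gamma$ and $\delta$. If $\delta = \lambda^+$ is a successor, then \cref{SuccessorLemma} applied to $\lambda^+$ (which is Fréchet) tells us $\lambda$ is either Fréchet or a limit of Fréchet cardinals; since $\gamma^+ < \delta = \lambda^+$ forces $\gamma < \lambda$, in either subcase there is a Fréchet cardinal in $(\gamma, \lambda]$, so $\gamma^\sigma \leq \lambda < \delta$.

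The main conceptual step is recognizing that the two-part hypothesis on $\delta$ is precisely what lets \cref{SuccessorLemma} (in the successor case) combine with the trivial observation (in the limit case) to rule out the bad alternative $\gamma^\sigma = \delta$ of \cref{SuccessorTheorem}. The rest is bookkeeping: once every successor cardinal in $[\kappa_{U_\delta}, \delta]$ is Fréchet, Prikry fills in the inaccessibles, and \cref{Ketothing} delivers the $\delta$-strong compactness of $\kappa_{U_\delta}$.
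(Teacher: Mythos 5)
Your proof is correct and follows essentially the same route as the paper: rule out $\gamma^\sigma = \delta$ via the case split (trivial in the limit case, \cref{SuccessorLemma} in the successor case), conclude from \cref{SuccessorThm} that every successor cardinal in $[\kappa_{U_\delta},\delta]$ is Fr\'echet, fill in the remaining regular cardinals with Prikry's theorem, and finish with \cref{Ketothing}. The only difference is cosmetic: you make explicit the appeal to \cref{SuccessorThm} that the paper leaves implicit in the sentence ``It follows that every successor cardinal in the interval is Fr\'echet.''
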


\begin{proof}[Proof of \cref{MainThm1}]
By \cref{StrongCompactness}, we have reduced to the case that \(\delta\) is a strongly inaccessible isolated cardinal. By \cref{UpperBound}, \(\delta\) is measurable. The proof will be complete if we show \(\kappa_{U_\delta} = \delta\), since \(\delta\) is certainly \(\delta\)-strongly compact. Let \(W\) be a normal ultrafilter on \(\delta\). We claim \(U_\delta = W\). Otherwise \(U_\delta \sE W\), and so since \(W\) is normal, \(U_\delta \mo W\). This implies \(\delta\) is a limit of measurable cardinals, which contradicts that \(\delta\) is isolated.
\end{proof}

\section{Ultrafilters on an isolated cardinal}\label{IsolatedUltrafilters}
In this section we enact a fairly complete analysis of the ultrafilters that lie on an isolated cardinal. {\it For the rest of the section, \(\lambda\) will denote a fixed isolated cardinal and \(U\) will denote \(U_\lambda\).}

We start with a characterization of \(U\).

\begin{thm}[UA]\label{IsolatedUnique}
The ultrafilter \(U\) is the unique countably complete uniform ultrafilter on \(\lambda\) such that \([\textnormal{id}]_U\) is a generator of \(j_U\).
\begin{proof}
Suppose towards a contradiction \(U'\) is the \(\sE\)-least countably complete uniform ultrafilter on \(\lambda\) such that \([\textnormal{id}]_{U'}\) is a generator of \(j_{U'}\) and \(U'\neq U\). Since \([\text{id}]_{U'}\) is a generator of \(U'\), \(U'\) is Fr\'echet uniform. Therefore by the definition of \(U\), \(U\sE U'\). 

Let \((i,i') : (M_U,M_{U'})\to N\) be a comparison of \((j_U,j_{U'})\). Then \(i'([\text{id}]_{U'})\) is a generator of \(i'\circ j_{U'} = i\circ j_U\). Since \(i([\text{id}]_U) < i'([\text{id}]_{U'})\) follows that \(i'([\text{id}]_{U'})\) is an  \(i([\text{id}]_U)\)-generator of \(i\circ j_U\) and hence \(i'([\text{id}]_{U'})\) is a generator of \(i : M_U\to N\). 

Let \(W\) be the ultrafilter derived from \(i\) using \(i'([\text{id}]_{U'})\). Then \(W\) is Fr\'echet uniform. Moreover since \(i'([\text{id}]_U) \geq \sup i'\circ j_{U'}[\lambda] = \sup i\circ j_U[\lambda]\), \(\textsc{sp}(W) \geq \sup j_U[\lambda]\).

 If \(\textsc{sp}(W) < j_U(\lambda)\), then an easy reflection argument yields that \(\lambda\) is a limit of Fr\'echet cardinals, contradicting that \(\lambda\) is isolated. So \(\textsc{sp}(W) = j_U(\lambda)\). Therefore in \(M_U\), \(W\) is a countably complete uniform ultrafilter on \(j_U(\lambda)\) such that \([\textnormal{id}]^{M_U}_{W}\) is a generator of \(j^{M_U}_W\). Moreover \(U^-(W) = U'\) so \(W\neq j_U(U)\). It follows that \(j_U(U')\E^{M_U} W\).
 
But \(W = t_U(U')\). Since \(t_U(U') \E^{M_U} j_U(U')\), it follows that \(t_U(U') = j_U(U')\). This implies \(U\I U'\). But this contradicts \cref{Nonisolation}.
\end{proof}
\end{thm}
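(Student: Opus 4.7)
The plan is to argue by contradiction. Suppose some countably complete uniform ultrafilter on \(\lambda\) other than \(U\) has \([\textnormal{id}]\) as a generator of its ultrapower embedding, and let \(U'\) be \(\sE\)-least with this property (legitimate since \(\sE\) is wellfounded). The generator condition forces \(U'\) to be Fr\'echet uniform, and since \(U = U_\lambda\) is by definition the \(\sE\)-least Fr\'echet uniform countably complete ultrafilter on \(\lambda\), one obtains \(U \sE U'\) immediately.

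Next I would form the UA-comparison \((i,i') \colon (M_U, M_{U'}) \to N\) with \(i \circ j_U = i' \circ j_{U'}\). Under UA, \(U \sE U'\) is reflected in the comparison as the ordinal inequality \(i([\textnormal{id}]_U) < i'([\textnormal{id}]_{U'})\). Since \(i'([\textnormal{id}]_{U'})\) remains a generator of \(i' \circ j_{U'} = i \circ j_U\), it is an \(i([\textnormal{id}]_U)\)-generator, hence a generator of the tail embedding \(i \colon M_U \to N\) over \(M_U\). Let \(W \in M_U\) be the ultrafilter derived from \(i\) using \(i'([\textnormal{id}]_{U'})\). Then \(W\) is uniform, \([\textnormal{id}]_W\) is a generator, and the estimate \(i'([\textnormal{id}]_{U'}) \geq \sup i \circ j_U[\lambda]\) yields \(\textsc{sp}(W) \geq \sup j_U[\lambda]\).

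The key technical step is to pin down \(\textsc{sp}(W) = j_U(\lambda)\): if \(\textsc{sp}(W) < j_U(\lambda)\), a reflection through \(j_U\) produces Fr\'echet cardinals cofinally in \(\lambda\), contradicting isolation. Once this is established, \(W\) is an \(M_U\)-uniform ultrafilter on \(j_U(\lambda)\) whose identity is a generator, and \(W \neq j_U(U)\) because \(U^-(W) = U' \neq U = U^-(j_U(U))\). Applying elementarity of \(j_U\) to the \(\sE\)-minimality used to select \(U'\) yields \(j_U(U') \E^{M_U} W\). On the other side, \(W = t_U(U')\) by the standard identification of the Ketonen-least translate from a comparison, and \(t_U(U') \E^{M_U} j_U(U')\) holds unconditionally from the definition of \(t_U\). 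Hence \(t_U(U') = j_U(U')\), so \(U \I U'\) by \cref{InternalTranslation}, contradicting \cref{Nonisolation} since \(\lambda\) is isolated while \(j_{U'}\) is discontinuous at \(\lambda\).

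The main obstacle will be the reflection argument ruling out \(\textsc{sp}(W) < j_U(\lambda)\) from isolation of \(\lambda\), and cleanly identifying \(W\) as \(t_U(U')\) using the comparison data. Once these are in place, the chain \(j_U(U') \E^{M_U} W = t_U(U') \E^{M_U} j_U(U')\) combined with \cref{InternalTranslation} and \cref{Nonisolation} delivers the contradiction mechanically.
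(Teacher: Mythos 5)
Your proposal is correct and follows essentially the same route as the paper's own proof: the $\sE$-least counterexample $U'$, the comparison $(i,i')$, deriving $W$ from the tail embedding $i$ using the generator $i'([\text{id}]_{U'})$, pinning $\textsc{sp}(W)=j_U(\lambda)$ by isolation, and closing the loop via $j_U(U')\E^{M_U} W=t_U(U')\E^{M_U} j_U(U')$, \cref{InternalTranslation}, and \cref{Nonisolation}. The two steps you flag as obstacles are exactly the ones the paper handles by the same reflection and translation-function arguments, so nothing further is needed.
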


We then analyze the generators of \(U^n\).

\begin{defn}
We define finite sets of ordinals \(p_n\) by induction setting \(p_0 = \emptyset\) and \[p_{n+1} = j_{U}(p_n)\cup \{j_{U}(j_{U^n})([\text{id}]_U)\}\]
\end{defn}

\begin{lma}[UA]\label{Generators}
\(j_U(j_{U^n})([\textnormal{id}]_U)\) is a \(j_U(p_n)\)-generator of \(j_{U^{n+1}}\).
\end{lma}

For the proof we need an abstract version of the Dodd-Jensen lemma:
\begin{prp}\label{MinDefEmb}
Suppose \(M\) and \(N\) are inner models and \(j : M\to N\) and \(k : M\to N\) are elementary embeddings such that \(j\) is definable from parameters over \(M\). Then \(j(\alpha) \leq k(\alpha)\) for any ordinal \(\alpha\).\qed
\end{prp}

This yields the following fact:

\begin{lma}[UA]\label{GenLemma}
Suppose \(Z\) is a countably complete ultrafilter and \(h : M_U\to M_Z\) is an internal ultrapower embedding with \(j_Z = h\circ j_U\). Then \(h([\textnormal{id}]_U)\) is the least generator of \(M_Z\) above \(\sup j_Z[\lambda]\).
\begin{proof}
Let \(\xi\) be the least generator of \(j_Z\) above \(\sup j_Z[\lambda]\), so \(h([\text{id}]_U) \leq \xi\). Let \(U'\) be the ultrafilter derived from \(Z\) using \(\xi\) and let \(k : M_{U'}\to M_Z\) be the canonical factor embedding. By \cref{IsolatedUnique}, \(U' = U\). By \cref{MinDefEmb}, \(\xi = k([\text{id}]_U) \leq h([\text{id}]_U)\). Thus \(h([\text{id}]_U) = \xi\), as desired.
\end{proof}
\end{lma}

\begin{proof}[Proof of \cref{Generators}]
Suppose not. Let \(p \subseteq j_U(j_{U^n})([\textnormal{id}]_U)\) be the least parameter such that \[j_U(j_{U^n})([\textnormal{id}]_U)\in H^{M_{U^{n+1}}}(j_{U^{n+1}}[V]\cup j_U(p_n)\cup p)\]

\begin{case} \(p \subseteq \sup j_{U^{n+1}}[\lambda]\). \end{case}
Note that \(s_{U^n}(U)\) is the ultrafilter derived from \(j_U\restriction M_{U^n}\) using \(j_U(j_{U^n})([\textnormal{id}]_U)\). It follows that \(s_{U^n}(U)\) is isomorphic over \(M_{U^n}\) to the uniform \(M_{U^n}\)-ultrafilter \(U'\) derived from \(j_U\restriction M_{U^n}\) using \(p\). In other words, for some ordinal \(\delta < \sup j_{U^{n+1}}[\lambda]\), there is some \(f :[\delta]^{<\omega} \to \sup j_{U^n}[\lambda]\) with \(f\in M_{U^n}\) such that \(X\in s_{U^n}(U)\) if and only if \(f^{-1}[X]\in U'\). 

In particular, the set \(X = f[[\delta]^{<\omega}]\) is such that \(X\in s_{U^n}(U)\) and \(|X|^{M_{U^n}} \leq \delta\). Let \(\bar X = j_{U^n}^{-1}[X]\). Then \(\bar X\in U\), so \(|\bar X| = \lambda\). But by \cref{jCover}, this implies every set \(A\subseteq M_{U^n}\) with \(|A|\leq \lambda\) is covered by a set \(A'\in M_{U^n}\) with \(|A'|^{M_{U^n}}\leq \delta\). As in \cref{Nonisolation}, it follows that \(j_{U^n}\) is discontinuous at all sufficiently large regular cardinals below \(\lambda\), which contradicts that \(\lambda\) is isolated.
\begin{case} \(p\not\subseteq \sup j_{U^{n+1}}[\lambda]\). \end{case}
Since \(p\) is a set of generators of \(j_{U^{n+1}}\), it follows that there is a generator of \(j_{U^{n+1}}\) in the interval \([\sup j_{U^{n+1}}[\lambda], j_U(j_{U^n})([\textnormal{id}]_U))\). But this contradicts \cref{GenLemma} with \(Z = U^{n+1}\) and \(h = j_U(j_{U^n})\).
\end{proof}

\begin{cor}[UA]
\(p_n\) is the least parameter \(p\) with \(M_{U^n} = H^{M_{U^n}}(j_{U^n}[V]\cup p)\).\qed
\end{cor}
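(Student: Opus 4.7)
The plan is to induct on $n$, using the characterization that a parameter $p$ for an ultrapower embedding $j : V \to M$ is lex-least exactly when, for every $\alpha \in p$, $\alpha \notin H^M(j[V] \cup (p \cap (\alpha,\infty)) \cup \alpha)$. (If this failed for some $\alpha$, one could replace $\alpha$ in $p$ by ordinals below it and obtain a lex-smaller parameter.) The base case $n=0$ is immediate since $p_0 = \emptyset$.

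For the inductive step, set $\xi = j_U(j_{U^n})([\text{id}]_U)$. First I verify that $p_{n+1}$ is a parameter: by elementarity of $j_U$ applied to the inductive hypothesis, $M_{U^{n+1}} = H^{M_{U^{n+1}}}(j_U(j_{U^n})[M_U] \cup j_U(p_n))$, and since $M_U = H^{M_U}(j_U[V] \cup \{[\text{id}]_U\})$, applying $j_U(j_{U^n})$ yields $j_U(j_{U^n})[M_U] \subseteq H^{M_{U^{n+1}}}(j_{U^{n+1}}[V] \cup \{\xi\})$. These combine to give $M_{U^{n+1}} = H^{M_{U^{n+1}}}(j_{U^{n+1}}[V] \cup p_{n+1})$, and more generally the hull identity
\[ H^{M_{U^{n+1}}}(j_{U^{n+1}}[V] \cup \{\xi\} \cup A) = H^{M_{U^{n+1}}}(j_U(j_{U^n})[M_U] \cup A) \]
for any set $A$ of ordinals.

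For the minimality condition at an element $\alpha \in p_{n+1}$, two cases arise. If $\alpha = \xi$, then $p_{n+1} \cap (\xi,\infty) = j_U(p_n) \cap (\xi,\infty)$, and \cref{Generators} gives $\xi \notin H^{M_{U^{n+1}}}(j_{U^{n+1}}[V] \cup j_U(p_n) \cup \xi)$, which is more than enough. If instead $\alpha \in j_U(p_n)$, then $p_{n+1} \cap (\alpha,\infty)$ equals either $j_U(p_n) \cap (\alpha,\infty)$ (when $\alpha > \xi$) or $\{\xi\} \cup (j_U(p_n) \cap (\alpha,\infty))$ (when $\alpha < \xi$); adjoining $\xi$ where necessary (harmless when $\xi < \alpha$) and applying the hull identity reduces the required claim to
\[ \alpha \notin H^{M_{U^{n+1}}}(j_U(j_{U^n})[M_U] \cup (j_U(p_n) \cap (\alpha,\infty)) \cup \alpha), \]
which is precisely the inductive hypothesis applied inside $M_U$ (via elementarity of $j_U$) to the embedding $j_U(j_{U^n}) : M_U \to M_{U^{n+1}}$ and its lex-least parameter $j_U(p_n)$. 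The main obstacle is the bookkeeping in this case split around $\xi$: one must correctly identify $p_{n+1} \cap (\alpha,\infty)$ depending on the relative order of $\alpha$ and $\xi$, and then use the hull identity to transition between the external and internal descriptions of the Skolem hull.
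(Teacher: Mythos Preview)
Your argument is correct and is precisely the natural unfolding of the corollary from \cref{Generators}; the paper leaves the proof implicit (marked with \qed), and your induction together with the standard lex-least characterization of the Dodd parameter is exactly how one fills in the details.
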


\begin{defn}
If \(W\) is a countably complete ultrafilter on \(\lambda\), then \(p_W\) denotes the least parameter \(p\) with \(M_W = H^{M_W}(j_W[V]\cup p\cup q)\) for some \(q\subseteq \sup j_W[\lambda]\).
\end{defn}

Thus \(p_W\) is the piece of the Dodd parameter of \(W\) contained in the interval \( [\sup j_W[\lambda],j_W(\lambda))\).

\begin{thm}[UA]\label{IsolationTheorem}
Suppose \(W\) is a countably complete ultrafilter on \(\lambda\). Let \(n = |p_W|\) and let \(\lambda_* = \sup j_{U^n}[\lambda]\). There is some \(D\in \Un^{M_{U^n}}_{<\lambda_*}\) with \(j^{M_{U^n}}_D\circ j_{U^n} = j_W\) and \(j^{M_{U^n}}_D(p_n) = p_W\).
\begin{proof}
We may assume by induction that the theorem is true for all \(W' \sE W\). We may also assume without loss of generality that \(W\) is \(\lambda\)-decomposable. It follows that \(U\not \I W\) by \cref{Nonisolation}.

Let \((h,i): (M_U,M_W)\to N\) be the canonical comparison of \((j_U,j_W)\). Let \(p = i(p_W)\) and let \(\xi = [\text{id}]_U\). 
\begin{clm}\label{RFClm} \(h(\xi) = \min p\).\end{clm}
\begin{proof}
Suppose not. Then by \cref{GenLemma}, \(h(\xi) < \min p\). 
\begin{sclm}
\(h(\xi)\) is a \(p\)-generator of \(h\circ j_U\).
\begin{proof}
We have that \(h = j^{M_U}_{W'}\) where \(W' = t_U(W)\). Since \(U\not \I W\), we must have \(t_U(W) \sE^{M_U} j_U(W)\). It is easy to check that \(p = (p_{W'})^{M_U}\) where the parameter is defined at \(j_U(\lambda)\) rather than at \(\lambda\). Therefore by our induction hypothesis and elementarity, there is some \(n < \omega\) such that \(h = k\circ j_U(j_{U^n})\) where \(k : M^{M_U}_{j_U(U^n)}\to N\) is an elementary embedding such that \(k(j_U(p_n)) = p\). By \cref{Generators}, \(j_U(j_{U^n})(\xi)\) is a \(j_U(p_n)\)-generator of \(j_{U^{n+1}} =j_U(j_{U^n}) \circ j_U\). Therefore \(k(j_U(j_{U^n})(\xi))\) is a \(k(j_U(p_n))\)-generator of \(k\circ j_U(j_{U^n})\circ j_U\). Replacing like terms, \(h(\xi)\) is a \(p\)-generator of \(h\circ j_U\).
\end{proof}
\end{sclm}
It follows that \(h(\xi)\) is a generator of the embedding \(i : M_W\to N\). Let \(Z\) be the ultrafilter derived from \(i\) using \(h(\xi)\). Then \(\textsc{sp}(Z)\geq \sup j_W[\lambda]\) since \(h(\xi)\geq \sup i\circ j_W[\lambda]\). On the other hand since \(h(\xi) < \min p\), \(\textsc{sp}(Z) \leq \min p < j_W(\lambda)\). But in \(M_W\), \(Z\) is a Fr\'echet uniform ultrafilter, since \([\text{id}]_Z\) is a generator of \(j^{M_W}_Z\). This contradicts that by isolation \(j_W(\lambda) = (\gamma^\sigma)^{M_W}\) for some \(\gamma < \sup j_W[\lambda]\). This contradiction proves \cref{RFClm}.
\end{proof}
By the definition of a canonical comparison, it follows that \(i\) is the identity, \(N = M_W\), and \(h : M_U\to M_W\) is an internal ultrapower embedding. Moreover, as in the proof of \cref{RFClm}, our induction hypothesis implies that for some \(n < \omega\), \[h = j^{M_{U^{n+1}}}_D\circ j_U(j_{U^n})\] where \(D\in \Un_{< \lambda_*}\) for \(\lambda_*  = \sup j_U(j_U^n)[j_U(\lambda)]\) and \(j^{M_{U^{n+1}}}_D(j_U(p_n)) = p \setminus (h(\xi) + 1)\). Since \(j_{U^{n+1}}(\lambda) = (\gamma^\sigma)^{M_{U^{n+1}}}\), there is in fact some \(D'\in \Un^{M_{U^{n+1}}}_{<\sup j_{U^{n+1}}[\lambda]}\) with \(D'\equiv D\) in \(M_{U^{n+1}}\). Thus replacing \(D\) with \(D'\), we may assume \(D\in \Un^{M_{U^{n+1}}}_{<\sup j_{U^{n+1}}[\lambda]}\).

Since \(p_{n+1} = j_U(p_n)\cup j_U(j_{U^n})(\xi)\), we have \(j^{M_{U^{n+1}}}_D(p_{n+1}) = p \setminus (h(\xi) + 1)\cup \{h(\xi)\} = p\), with the final equality following from \cref{RFClm}.

Putting everything together, we have shown that there is some \(D\in \Un^{M_{U^{n+1}}}_{<\sup j_{U^{n+1}}[\lambda]}\) such that \(j_W = j^{M_{U^{n+1}}}_D\circ j_{U^{n+1}}\) and such that \(j^{M_{U^{n+1}}}_D(p_{n+1}) = p_W\). This proves the theorem.
\end{proof}
\end{thm}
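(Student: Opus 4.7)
The plan is to induct on $W$ in the Ketonen order $\sE$, assuming the result for all $W' \sE W$. First, if $W$ is not $\lambda$-decomposable, then $W$ is Rudin--Keisler equivalent to an ultrafilter on some ordinal ${<}\lambda$, $p_W = \emptyset$, and the conclusion is immediate with $n = 0$. So assume $W$ is $\lambda$-decomposable. Because $\lambda$ is isolated, hence not a limit of Fréchet cardinals, \cref{Nonisolation} forces $U \not\I W$.

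Form the canonical comparison $(h,i) : (M_U, M_W) \to N$ of $(j_U, j_W)$; set $\xi = [\text{id}]_U$ and $p = i(p_W)$. Since $U \not\I W$, we have $h = j^{M_U}_{W'}$ where $W' = t_U(W)$ satisfies $W' \sE^{M_U} j_U(W)$, so the induction hypothesis in $M_U$ applies at $W'$. Combined with \cref{Generators} inside $M_U$, this provides an internal factorization $h = k \circ j_U(j_{U^m})$ for some $m$ and some embedding $k$ with $k(j_U(p_m)) = p \setminus \{\min p\}$, and tells us $j_U(j_{U^m})(\xi)$ is a $j_U(p_m)$-generator of $j_U(j_{U^m}) \circ j_U$; hence $h(\xi) = k(j_U(j_{U^m})(\xi))$ is a $(p \setminus \{\min p\})$-generator of $h \circ j_U$.

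The crucial step, which I expect to be the main obstacle, is to show $h(\xi) = \min p$. By \cref{GenLemma}, $h(\xi)$ is the least generator of $h \circ j_U = i \circ j_W$ above $\sup j_W[\lambda]$, so $h(\xi) \le \min p$. Were the inequality strict, the previous paragraph would promote $h(\xi)$ to a genuine $p$-generator of $h \circ j_U$, hence a generator of $i : M_W \to N$. The ultrafilter $Z$ derived from $i$ at $h(\xi)$ would then be Fréchet uniform in $M_W$ with $\sup j_W[\lambda] \le \textsc{sp}(Z) < \min p < j_W(\lambda)$. But isolation, pushed through by elementarity, says $j_W(\lambda) = (\gamma^\sigma)^{M_W}$ for some $\gamma < \sup j_W[\lambda]$, so no Fréchet cardinal of $M_W$ lies in the open interval $(\sup j_W[\lambda], j_W(\lambda))$---contradiction. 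This is where the full isolation hypothesis (rather than just Fréchet-ness) is used essentially.

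Once $h(\xi) = \min p$, the definition of the canonical comparison forces $i$ to be the identity, so $N = M_W$ and $h : M_U \to M_W$ is internal with $j_W = h \circ j_U$. Absorbing $k$ and $j_U(j_{U^m})$ into a single ultrapower, we obtain $D \in \Un^{M_{U^{m+1}}}_{<\sup j_{U^{m+1}}[\lambda]}$ (after replacing by an equivalent ultrafilter of small space, again using isolation inside $M_{U^{m+1}}$) with $j_W = j^{M_{U^{m+1}}}_D \circ j_{U^{m+1}}$. Setting $n = m+1 = |p_W|$, the parameter equation $j^{M_{U^n}}_D(p_n) = p_W$ then falls out of the recursion $p_n = j_U(p_m) \cup \{j_U(j_{U^m})(\xi)\}$ by matching $j_U(j_{U^m})(\xi)$ with $h(\xi) = \min p_W$ and $j_U(p_m)$ with $p_W \setminus \{\min p_W\}$.
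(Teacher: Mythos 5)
Your proposal follows the paper's argument step for step: the same induction along \(\sE\), the same reduction to the \(\lambda\)-decomposable case (your justification via \(p_W=\emptyset\) and \(n=0\) is a nice explicit touch), the same appeal to \cref{Nonisolation} to get \(U\not\I W\), the same canonical comparison with the key claim \(h(\xi)=\min p\) proved by producing a Fr\'echet uniform ultrafilter of \(M_W\) with space strictly between \(\sup j_W[\lambda]\) and \(j_W(\lambda)\), and the same final bookkeeping. There is, however, one step that does not work as written.

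In your proof of the claim you assert that the induction hypothesis applied in \(M_U\) gives \(k(j_U(p_m)) = p\setminus\{\min p\}\), so that \(h(\xi)\) is a \((p\setminus\{\min p\})\)-generator of \(h\circ j_U\), and you then say that under the contradiction hypothesis \(h(\xi)<\min p\) this ``promotes'' \(h(\xi)\) to a genuine \(p\)-generator. That promotion goes the wrong way: for \(q\subseteq p\), the hull \(H^N(h\circ j_U[V]\cup q\cup h(\xi))\) is contained in \(H^N(h\circ j_U[V]\cup p\cup h(\xi))\), so being a \(p\)-generator is the \emph{stronger} property, and adding \(\min p\) to the hull could in principle capture \(h(\xi)\). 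The identity \(k(j_U(p_m))=p\setminus\{\min p\}\) is what holds \emph{after} the claim is established, when \(h(\xi)=\min p\) lies in \(h[M_U]\) and therefore drops out of the Dodd parameter of \(N\) computed over \(M_U\); under the contradiction hypothesis \(h(\xi)<\min p\), that parameter is \(p\setminus(h(\xi)+1)=p\), i.e. all of \(p\). With that correction the induction hypothesis hands you \(k(j_U(p_m))=p\) directly, \cref{Generators} makes \(h(\xi)\) a \(p\)-generator of \(h\circ j_U\), and since everything \(i[M_W]\) contributes beyond \(i\circ j_W[V]\cup p\) lies below \(\sup i\circ j_W[\lambda]\le h(\xi)\), it follows that \(h(\xi)\) is a generator of \(i\), which is exactly what the contradiction requires. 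The remainder of your argument, including the use of \cref{GenLemma} for the inequality \(h(\xi)\le\min p\) and the parameter matching at the end, is fine.
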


We will use the following consequence of this theorem, which improves \cref{SuccessorInternal}.

\begin{cor}[UA]\label{IsolatedInternal}
Suppose \(D\) is a countably complete \(M_U\)-ultrafilter on an ordinal below \(\sup j_U[\lambda]\). Then \(D\in M_U\).
\begin{proof}
Let \(W\) be a countably complete ultrafilter on \(\lambda\) such that \(j_W = j^{M_U}_D\circ j_U\). An easy calculation shows that \(p_W = \{j^{M_U}_D([\text{id}]_U)\}\). Therefore by \cref{IsolationTheorem}, there is an internal ultrapower embedding \(k : M_U\to M_W\) such that \(j_W = k\circ j_U\) and \(k([\text{id}]_U) = j^{M_U}_D([\text{id}]_U)\). It follows that \(k = j^{M_U}_D\). But then \(j^{M_U}_D\) is an internal ultrapower embedding, so \(D\in M_U\).
\end{proof}
\end{cor}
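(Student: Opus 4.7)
The plan is to manufacture a countably complete ultrafilter $W$ on $\lambda$ whose ultrapower embedding is precisely $j^{M_U}_D \circ j_U$, then apply \cref{IsolationTheorem} to force $j^{M_U}_D$ itself to be an internal ultrapower embedding of $M_U$.

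First I would set $j = j^{M_U}_D \circ j_U : V \to M^{M_U}_D$ and $\xi = j^{M_U}_D([\text{id}]_U)$, and choose $W$ to be a countably complete ultrafilter on $\lambda$ with $j_W = j$ (derived from $j$ using $\xi$, which remains a generator of $j$ because $[\text{id}]_U$ is a generator of $j_U$ by \cref{IsolatedUnique} and this is preserved under the internal map $j^{M_U}_D$). The key calculation is $p_W = \{\xi\}$: every element of $M_W = M^{M_U}_D$ is of the form $j_W(f)(\xi,[\text{id}]_D)$ for some $f \in V$, and since $D$ lives on an ordinal $\alpha < \sup j_U[\lambda]$ we have $\alpha < j_U(\beta)$ for some $\beta < \lambda$, so $[\text{id}]_D < j^{M_U}_D(j_U(\beta)) = j_W(\beta) < \sup j_W[\lambda]$. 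This gives $p_W \subseteq \{\xi\}$, and $\xi$ genuinely lies above $\sup j_W[\lambda]$ as a generator of $j_W$ by applying $j^{M_U}_D$ to the corresponding property of $[\text{id}]_U$.

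Next, \cref{IsolationTheorem} applied with $n = 1$, noting $p_1 = \{[\text{id}]_U\}$ via \cref{Generators}, yields $D' \in \Un^{M_U}_{<\sup j_U[\lambda]}$ with $j_W = j^{M_U}_{D'} \circ j_U$ and $j^{M_U}_{D'}([\text{id}]_U) = \xi = j^{M_U}_D([\text{id}]_U)$. Any $m \in M_U$ is $j_U(f)([\text{id}]_U)$ for some $f \in V$, so both internal-to-$M_U$ embeddings $j^{M_U}_{D'}$ and $j^{M_U}_D$ send $m$ to $j_W(f)(\xi)$; hence they coincide on $M_U$. Thus $j^{M_U}_D = j^{M_U}_{D'}$ is an internal ultrapower embedding of $M_U$, and deriving $D$ from it using $[\text{id}]_D$, which now takes place inside $M_U$ because $D' \in M_U$, puts $D$ into $M_U$.

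The main obstacle is the parameter calculation $p_W = \{\xi\}$, specifically showing $\xi$ is not absorbed into the Skolem hull of $j_W[V]$ together with ordinals below $\sup j_W[\lambda]$. This is where the isolation hypothesis is being used essentially: it is precisely the uniqueness of $U$ as the countably complete uniform ultrafilter on $\lambda$ with $[\textnormal{id}]_U$ as its top generator (\cref{IsolatedUnique}, via \cref{GenLemma}) that prevents any extra generators from being introduced into $M_W$ and keeps $|p_W|$ at exactly $1$.
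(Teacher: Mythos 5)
Your proposal is correct and follows essentially the same route as the paper's proof: form \(W\) with \(j_W = j^{M_U}_D\circ j_U\), compute \(p_W = \{j^{M_U}_D([\mathrm{id}]_U)\}\), apply \cref{IsolationTheorem}, and identify the resulting internal embedding with \(j^{M_U}_D\) by agreement on \(j_U[V]\cup\{[\mathrm{id}]_U\}\). The only difference is that you spell out the ``easy calculation'' of \(p_W\) that the paper leaves implicit, and you do so correctly.
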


\section{The continuum function below an isolated cardinal}
In this section we prove a theorem that shows \cref{UpperBound} is optimal in a sense: 

\begin{thm}[UA]\label{LowerBound}
Suppose \(\lambda\) is an isolated cardinal and \(\gamma < \lambda\) is Fr\'echet. Then \(2^\gamma < \lambda\).
\end{thm}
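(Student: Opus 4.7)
The plan is to argue by contradiction: suppose some Fr\'echet $\gamma<\lambda$ has $2^\gamma\geq\lambda$, and derive a contradiction with the isolation of $\lambda$. The main tool is \cref{Exponential} applied to $U=U_\lambda$ with the ordinal $\gamma$, which produces a countably complete ultrafilter $D$ on $2^\gamma$ and an elementary embedding $k:M_D\to M_U$ with $j_U=k\circ j_D$ and $\gamma\subseteq\textnormal{ran}(k)$. Replacing $D$ by the $\sE$-least ultrafilter Rudin-Keisler equivalent to it, assume $D$ is Fr\'echet uniform on some cardinal $\mu\leq 2^\gamma$; since $D\RK U$ and $U$ lives on $\lambda$, we have $\mu\leq\lambda$.

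The critical case is $\mu<\lambda$. By isolation of $\lambda$ (so no Fr\'echet cardinals in $(\gamma_0,\lambda)$, where $\gamma_0<\lambda$ is the largest ordinal with $\lambda=\gamma_0^\sigma$), we get $\mu\leq\gamma_0$, hence $D\in\Un_{<\lambda}$. By \cref{SuccessorInternal} this yields $D\I U$, so that $j_D\restriction M_U$ is an internal ultrapower of $M_U$; equivalently, $s_U(D)\in M_U$. The core technical step is to show $[\textnormal{id}]_U\in\textnormal{ran}(k)$. To do this, combine $\gamma\cup j_U[V]\subseteq\textnormal{ran}(k)$ with the internality of $j_D\restriction M_U$ to construct an explicit preimage of $[\textnormal{id}]_U$ under $k$, using that by \cref{IsolatedUnique} and \cref{GenLemma} the element $[\textnormal{id}]_U$ is pinned down as the unique generator of $j_U$ lying in the interval $[\sup j_U[\lambda],j_U(\lambda))$; the internal factor $s_U(D)$ provides precisely the data needed to name such a preimage in $M_D$. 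Once $[\textnormal{id}]_U\in\textnormal{ran}(k)$ is in hand, the Dodd-parameter analysis of \cref{IsolatedUltrafilters} (in particular \cref{IsolationTheorem}) forces $\textnormal{ran}(k)=M_U$, so $k$ is surjective and $D$ is Rudin-Keisler equivalent to $U$; this contradicts $\mu<\lambda$. In the residual case $\mu=\lambda$, a refined choice of the representing function $g:\lambda\to 2^\gamma$ underlying \cref{Exponential}'s construction of $D$ lets us extract, from the extra $2^\gamma$-sized structure of $D$ read through $k$, a countably complete $M_U$-ultrafilter on an ordinal strictly below $\sup j_U[\lambda]$ that fails to lie in $M_U$, contradicting \cref{IsolatedInternal}.

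The main obstacle is the claim $[\textnormal{id}]_U\in\textnormal{ran}(k)$ in the subcritical case $\mu<\lambda$: this is where isolation of $\lambda$ enters essentially, through the unique-generator characterization of $U_\lambda$ in \cref{IsolatedUnique}. I expect the proof of this step to parallel the generator-and-parameter manipulations in the proof of \cref{IsolationTheorem}, together with an explicit analysis of the factor embedding $k$ from \cref{Exponential} and the internal translation $s_U(D)\in M_U$ provided by $D\I U$.
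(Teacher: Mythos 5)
Your proposed route has a genuine gap at exactly the step you identify as the core one. In the case \(\mu < \lambda\) you claim \([\text{id}]_U \in \text{ran}(k)\), where \(\text{ran}(k) = H^{M_U}(j_U[V]\cup\{[g]_U\})\) for the function \(g\) underlying \cref{Exponential}. The characterization of \([\text{id}]_U\) via \cref{IsolatedUnique} and \cref{GenLemma} --- the unique generator of \(j_U\) in \([\sup j_U[\lambda], j_U(\lambda))\) --- is a characterization \emph{relative to} \(j_U[V]\), which is not a class of \(M_U\); membership in \(\text{ran}(k)\) requires first-order definability over \(M_U\) from \([g]_U\) and finitely many points of \(j_U[V]\), and ``being a generator'' is precisely the \emph{failure} of such definability, so this characterization cannot be used to name a preimage. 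A structural sanity check confirms the step must fail: nothing in your case-\((\mu<\lambda)\) argument uses the contradiction hypothesis \(2^\gamma\geq\lambda\) (only isolation, \cref{SuccessorInternal}, and generator facts enter), so if it were correct it would apply to \cref{Exponential} at \emph{any} ordinal \(\gamma<\lambda\) with \(2^\gamma<\lambda\) (e.g.\ \(\gamma=\omega\)), where necessarily \(\mu\leq 2^\gamma<\lambda\), and would conclude \(D\cong U_\lambda\), i.e.\ \(2^\gamma\geq\lambda\) --- absurd. The residual case \(\mu=\lambda\) is also not an argument as written: no construction of the alleged external \(M_U\)-ultrafilter is given, and it is unclear why one should exist.

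The paper's actual proof is a least-counterexample reflection argument with quite different ingredients. Take \(\lambda\) least isolated with a Fr\'echet \(\gamma<\lambda\) satisfying \(2^\gamma\geq\lambda\). By \cref{InternalSuper}, \(U_\lambda\) is \(\gamma\)-supercompact, so \(P(\gamma)\subseteq M_{U_\lambda}\) and \((2^\gamma)^{M_{U_\lambda}}\geq\lambda\); a separate claim using the Kunen inconsistency and the GCH results of \cite{GCH} gives \(\lambda< j_{U_\lambda}(\kappa_{U_\lambda})\). Then \cref{IsolatedInternal} shows \(\lambda\) and \(\gamma\) are Fr\'echet in \(M_{U_\lambda}\), and \cref{LimitUniformStrong} rules out \(\lambda\) being a limit of Fr\'echet cardinals there, so the whole counterexample configuration reflects to \(M_{U_\lambda}\). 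Minimality plus elementarity then force \(j_{U_\lambda}(\lambda)=\lambda\), contradicting \(\lambda<j_{U_\lambda}(\kappa_{U_\lambda})\). If you want to salvage your approach, you would need to locate where \(2^\gamma\geq\lambda\) does real work; in the paper it is the inequality \((2^\gamma)^{M_{U_\lambda}}\geq\lambda\) obtained from supercompactness that carries the hypothesis into the ultrapower.
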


We need a theorem from \cite{SC}:

\begin{thm}[UA]\label{InternalSuper}
If \(W\) is a countably complete ultrafilter and \(\lambda\) is a cardinal such that \(\Un_{<\lambda}\I W\), then for all Fr\'echet cardinals \(\gamma < \lambda\), \(W\) is \(\gamma\)-supercompact.
\begin{proof}
Suppose \(\delta\) is the least cardinal such that \(M_W\) is not closed under \(\delta\)-sequences. If \(\lambda\leq \delta\), we are done, so assume \(\delta < \lambda\). We must show that no cardinal in the interval \([\delta,\lambda]\) is Fr\'echet. 

\begin{clm} \(\delta\) is not Fr\'echet. 
\begin{proof}
Assume towards a contradiction that \(\delta\) is Fr\'echet. Note that \(\delta\) is a regular cardinal. We claim \(\delta\) is not isolated. This is because if \(\delta\) were isolated, then since \(\Un_{<\delta}\I W\), by \cref{Nonisolation3}, \(W\) would be \(\delta^+\)-complete, contradicting that \(M_W\) is not closed under \(\delta\)-sequences. Thus \(\delta\) is not isolated. 

By \cref{StrongCompactness}, it follows that \(\textsc{crt}(j_\delta)\) is \(\delta\)-strongly compact, so by the main theorem of \cite{SC}, \(M_\delta\) is closed under \({<}\delta\)-sequences and has the tight covering property at \(\delta\). Now \(j_\delta(M_W)\subseteq M_W\) and \(j_\delta(M_W) = M_{j_\delta(W)}^{M_\delta}\) is closed under \({<}j_\delta(\delta)\)-sequences inside of \(M_\delta\). In particular, since \(\delta < j_\delta(\delta)\), \begin{equation}\label{ClosureW}[\text{Ord}]^\delta\cap M_\delta\subseteq M_W\end{equation}

Fix a set of ordinals \(S\) of cardinality \(\delta\). By the tight covering property at \(\delta\), there is some \(A\in M_\delta\) containing \(j_\delta[S]\) such that \(|A|^{M_\delta} = \delta\). Let \(A' = A\cap j_\delta(S)\), so that \(j_\delta^{-1}[A'] =S\). By \cref{ClosureW},  \(A'\in M_W\). Since \(U_\delta\I W\), \(j_\delta\restriction M_W\) is definable over \(M_W\). Therefore \(j_\delta^{-1}[A']\in M_W\), so \(S\in M_W\). It follows that \(M_W\) is closed under \(\delta\)-sequences, contradicting the definition of \(\delta\).
\end{proof}
\end{clm}
To show no cardinal in the interval \([\delta,\lambda]\) is Fr\'echet, it now suffices to show that \(\delta^\sigma \geq \lambda\) so \(\lambda\) is a strong limit cardinal. Since \(\delta\) is regular but not Fr\'echet, \(\delta^\sigma > \delta^+\). Let \(\nu = \delta^\sigma\). Assume towards a contradiction that \(\nu < \lambda\). Then \(U_\nu \I W\) and \(\Un_{<\nu} \I W\). Therefore \(W\) is \(\nu^+\)-complete, contradicting that \(M_W\) is not closed under \(\nu\)-sequences.
\end{proof}
\end{thm}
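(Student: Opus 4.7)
The plan is to argue by contradiction. Let \(\delta\) be the least cardinal such that \(M_W\) is not closed under \(\delta\)-sequences; if \(\delta \geq \lambda\) we are done, so assume \(\delta < \lambda\). The goal then reduces to showing that no Fr\'echet cardinal lies in the interval \([\delta, \lambda]\). As the least closure-failure point of an ultrapower, \(\delta\) is automatically a regular cardinal, and by minimality \(j_W[\alpha] \in M_W\) for every \(\alpha < \delta\).

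First I would show \(\delta\) itself is not Fr\'echet. Assume for contradiction that it is, and split on whether \(\delta\) is isolated. If \(\delta\) is isolated, then \(U_\delta\) is defined and both \(U_\delta \I W\) and \(\Un_{<\delta} \I W\) hold by hypothesis; \cref{Nonisolation3} then forces \(W\) to be \(\delta^+\)-complete, contradicting the choice of \(\delta\). If \(\delta\) is not isolated, then \(\delta\) is either a Fr\'echet successor or a regular limit of Fr\'echet cardinals, so \cref{StrongCompactness} gives that \(\kappa_{U_\delta}\) is \(\delta\)-strongly compact; by the main theorem of \cite{SC}, \(M_{U_\delta}\) has tight covering at \(\delta\) and is closed under \({<}\delta\)-sequences. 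Combined with \(U_\delta \I W\), which makes \(j_{U_\delta}\restriction M_W\) definable over \(M_W\), I can pull any \(\delta\)-sized set \(S\) of ordinals into \(M_W\): cover \(j_{U_\delta}[S]\) by some \(A \in M_{U_\delta}\) with \(|A|^{M_{U_\delta}} = \delta\), set \(A' = A \cap j_{U_\delta}(S)\) so that \(j_{U_\delta}^{-1}[A'] = S\); observe that \(A' \in j_{U_\delta}(M_W) \subseteq M_W\) since \(j_{U_\delta}(M_W) = M^{M_{U_\delta}}_{j_{U_\delta}(W)}\) is closed under \({<}j_{U_\delta}(\delta)\)-sequences in \(M_{U_\delta}\) and \(\delta < j_{U_\delta}(\delta)\); then recover \(S = j_{U_\delta}^{-1}[A'] \in M_W\) using internality. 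This contradicts the definition of \(\delta\).

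Second, I would show \(\delta^\sigma \geq \lambda\), so no Fr\'echet cardinal lies in \((\delta, \lambda]\). Since \(\delta\) is regular but not Fr\'echet, \cref{SigmaSuccessor} forces \(\nu = \delta^\sigma\) to be a limit cardinal, so in particular \(\nu > \delta^+\). If \(\nu < \lambda\), then \(\nu\) is Fr\'echet with no Fr\'echet cardinals cofinally below it, and is therefore isolated; so \(U_\nu \I W\) and \(\Un_{<\nu} \I W\), and \cref{Nonisolation3} again yields that \(W\) is \(\nu^+\)-complete, contradicting the choice of \(\delta\). Hence \(\nu \geq \lambda\), and the proof is complete.

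The main obstacle I expect is the covering calculation in the non-isolated subcase of the first step: chaining together the tight covering property for \(M_{U_\delta}\), the closure of \(j_{U_\delta}(M_W)\) under \({<}j_{U_\delta}(\delta)\)-sequences inside \(M_{U_\delta}\), and the definability of \(j_{U_\delta}\restriction M_W\) over \(M_W\) into a single argument producing an arbitrary \(\delta\)-sequence of ordinals inside \(M_W\). Once this diagram chase is in hand, the remaining appeals to \cref{Nonisolation3} and \cref{SigmaSuccessor} are routine bookkeeping.
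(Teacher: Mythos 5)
Your proposal is correct and follows the paper's own argument essentially step for step: the same case split on whether \(\delta\) is isolated (using \cref{Nonisolation3} in one case and \cref{StrongCompactness} plus the tight covering property from \cite{SC} in the other, with the identical covering computation pulling \(S = j_{U_\delta}^{-1}[A']\) into \(M_W\)), followed by the same application of \cref{Nonisolation3} at \(\nu = \delta^\sigma\) to conclude \(\delta^\sigma \geq \lambda\). If anything, you make explicit two points the paper leaves implicit---that non-isolated regular Fr\'echet \(\delta\) must be a successor or a regular limit of Fr\'echet cardinals, and that \(\nu\) is isolated before invoking \cref{Nonisolation3}---which is a welcome clarification rather than a deviation.
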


We need the following lemma whose proof requires ideas from \cite{GCH}:

\begin{lma}[UA]\label{LimitUniformStrong}
Suppose \(\lambda\) is a limit of Fr\'echet cardinals. Then \(\lambda\) is a strong limit cardinal.
\begin{proof}[Sketch]
Let \(\kappa\) be the supremum of the isolated cardinals below \(\lambda\). If \(\kappa = \lambda\) then it is easy to see that \(\lambda\) is a limit of measurable cardinals so \(\lambda\) is a strong limit cardinal. Otherwise by \cref{SuccessorThm}, every successor cardinal in the interval \([\kappa,\lambda]\) is Fr\'echet. Applying \cref{InternalSuper} to \(U_\delta\) for successor cardinals \(\delta\in [\kappa,\lambda]\), it follows that \(\kappa\) is \(\delta\)-supercompact for all \(\delta < \lambda\). Now by the main theorem of \cite{GCH}, GCH holds for every cardinal in the interval \([\kappa,\lambda]\). So \(\lambda\) is a strong limit cardinal.
\end{proof}
\end{lma}

\begin{proof}[Proof of \cref{LowerBound}]
Assume towards a contradiction that \(\lambda\) is the least isolated cardinal such that for some Fr\'echet cardinal \(\gamma < \lambda\), \(2^\gamma \geq \lambda\). Since \(U_\lambda\) satisfies the hypotheses of \cref{InternalSuper}, \(U_\lambda\) is \(\gamma\)-supercompact. In particular, \(P(\gamma)\subseteq M_{U_\lambda}\), and therefore \((2^\gamma)^{M_{U_\lambda}}\geq 2^\gamma\geq  \lambda\).
\begin{clm}
\(\lambda < j_{U_\lambda}(\kappa_{U_\lambda})\).
\begin{proof}
Since \( j_{U_\lambda}(\kappa_{U_\lambda})\) is inaccessible in \(M_{U_\lambda}\) and \((2^\gamma)^{M_{U_\lambda}}\geq \lambda\), it suffices to show \(\gamma < j_{U_\lambda}(\kappa_{U_\lambda})\). If \(j_{U_\lambda}(\kappa_{U_\lambda}) < \gamma\), then \(j_{U_\lambda}\) witnesses that \(\kappa_{U_\lambda}\) is a huge cardinal. In particular, some \(\kappa < \kappa_{U_\lambda}\) is \(\kappa_{U_\lambda}\)-supercompact. A standard theorem on the propagation of supercompactness (see \cite{Kanamori}) implies \(\kappa\) is \(\gamma\)-supercompact. 

By the Kunen inconsistency theorem, there is an inaccessible cardinal \(\delta \leq \gamma\) such that \(j_{U_\lambda}(\delta) > \gamma\). By elementarity, in \(M_{U_\lambda}\), \(\kappa\) is \(j_{U_\lambda}(\delta)\)-supercompact. By the results of \cite{GCH} applied in \(M_{U_\lambda}\), \((2^\gamma)^{M_{U_\lambda}} = \gamma^+\), which contradicts the fact that \((2^\gamma)^{M_{U_\lambda}}\geq \lambda\).
\end{proof}
\end{clm}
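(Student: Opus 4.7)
The plan is to proceed by contradiction, extracting a small supercompact cardinal and then invoking the GCH machinery of \cite{GCH} inside \(M_{U_\lambda}\). Because \(\kappa_{U_\lambda}\) is the completeness of a countably complete ultrafilter it is measurable, hence inaccessible, so \(j_{U_\lambda}(\kappa_{U_\lambda})\) is inaccessible in \(M_{U_\lambda}\). Combined with \((2^\gamma)^{M_{U_\lambda}}\geq \lambda\) established just before the claim, it suffices to rule out \(j_{U_\lambda}(\kappa_{U_\lambda})\leq\gamma\): if \(\gamma<j_{U_\lambda}(\kappa_{U_\lambda})\), then by inaccessibility \((2^\gamma)^{M_{U_\lambda}}<j_{U_\lambda}(\kappa_{U_\lambda})\), so \(\lambda\leq(2^\gamma)^{M_{U_\lambda}}<j_{U_\lambda}(\kappa_{U_\lambda})\) and we are done.

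So I would assume for contradiction \(j_{U_\lambda}(\kappa_{U_\lambda})\leq \gamma\). Since \(U_\lambda\) is \(\gamma\)-supercompact (from \cref{InternalSuper}), \(M_{U_\lambda}\) is closed under \(\gamma\)-sequences, and in particular \(j_{U_\lambda}[j_{U_\lambda}(\kappa_{U_\lambda})]\in M_{U_\lambda}\). Thus \(j_{U_\lambda}\) itself witnesses that \(\kappa_{U_\lambda}\) is huge. The standard reflection principle for huge embeddings then produces some \(\kappa<\kappa_{U_\lambda}\) that is \(\kappa_{U_\lambda}\)-supercompact, and a standard propagation theorem (from Kanamori's book, exploiting the large-cardinal strength implicit in the huge embedding \(j_{U_\lambda}\)) upgrades this to \(\kappa\) being \(\gamma\)-supercompact.

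To close the argument, I would apply Kunen's theorem to the critical sequence \(\kappa_0=\kappa_{U_\lambda}\), \(\kappa_{n+1}=j_{U_\lambda}(\kappa_n)\): if every \(\kappa_n\leq\gamma\) then \(\sup_n\kappa_n\leq\gamma\) would be a cofinality-\(\omega\) fixed point of \(j_{U_\lambda}\), contradicting the Kunen inconsistency. Taking the least \(n\) with \(\kappa_n>\gamma\) and setting \(\delta=\kappa_{n-1}\) yields an inaccessible \(\delta\leq\gamma\) with \(j_{U_\lambda}(\delta)>\gamma\). Since \(\kappa\) is \(\gamma\)-supercompact it is in particular \(\delta\)-supercompact, so by elementarity \(M_{U_\lambda}\models\kappa\text{ is }j_{U_\lambda}(\delta)\text{-supercompact}\) with \(j_{U_\lambda}(\delta)>\gamma\). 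Applying the main theorem of \cite{GCH} inside \(M_{U_\lambda}\) then gives \((2^\gamma)^{M_{U_\lambda}}=\gamma^+\), contradicting \((2^\gamma)^{M_{U_\lambda}}\geq\lambda>\gamma^+\), since \(\lambda\) is an isolated — and hence limit — cardinal strictly greater than \(\gamma\).

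The main obstacle I anticipate is the propagation step: one must identify the right black-box theorem that transfers \(\kappa_{U_\lambda}\)-supercompactness of \(\kappa\) up to \(\gamma\)-supercompactness, and verify that the strength provided by the huge embedding \(j_{U_\lambda}\) really suffices. The remaining pieces (inaccessibility of the image of the completeness, the Kunen iteration, and invocation of the \cite{GCH} results inside an ultrapower) are structural and should go through without incident once the propagation step is in hand.
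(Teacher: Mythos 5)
Your proposal follows the paper's argument essentially verbatim: the same reduction via inaccessibility of \(j_{U_\lambda}(\kappa_{U_\lambda})\) in \(M_{U_\lambda}\), the same extraction of a huge embedding and a \(\gamma\)-supercompact \(\kappa<\kappa_{U_\lambda}\) via the standard propagation theorem, and the same Kunen-inconsistency step producing an inaccessible \(\delta\le\gamma\) with \(j_{U_\lambda}(\delta)>\gamma\) so that the \cite{GCH} results applied in \(M_{U_\lambda}\) force \((2^\gamma)^{M_{U_\lambda}}=\gamma^+\), a contradiction. You in fact supply more detail than the paper does (the closure of \(M_{U_\lambda}\) under \(\gamma\)-sequences witnessing hugeness, and the critical-sequence argument behind the Kunen step), and the propagation step you flag as uncertain is exactly the one the paper also treats as a cited black box.
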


By \cref{IsolatedInternal}, it follows that \(U_\lambda\cap M_{U_\lambda}\in M_{U_\lambda}\). In particular, \(\lambda\) is Fr\'echet in \(M_{U_\lambda}\). Similarly \(\gamma\) is Fr\'echet in \(M_{U_\lambda}\). Of course \(\lambda\) is a limit cardinal in \(M_{U_\lambda}\). In fact, \(\lambda\) is isolated in \(M_{U_\lambda}\). Assume not. Then \(\lambda\) is a limit of Fr\'echet cardinals in \(M_{U_\lambda}\). Therefore by \cref{LimitUniformStrong}, \(\lambda\) is a strong limit cardinal. But this contradicts that \((2^\gamma)^{M_{U_\lambda}}\geq\lambda\).

Therefore \(M_{U_\lambda}\) satisfies that \(\lambda\) is an isolated cardinal and there is a Fr\'echet cardinal \(\gamma < \lambda\) such that \(2^\gamma\geq \lambda\). But by elementarity, in \(M_{U_\lambda}\), \(j_{U_\lambda}(\lambda)\) is the least cardinal with this property. It follows that \( j_{U_\lambda}(\lambda) = \lambda\), which contradicts that \(\lambda < \sup j_{U_\lambda}(\kappa_{U_\lambda})\).
\end{proof}

Putting \cref{UpperBound} and \cref{LowerBound} together, and using \cref{SuccessorThm}, we have a rough picture of the continuum function below a nonmeasurable isolated cardinal: 

\begin{prp}[UA]
Suppose \(\lambda\) is a nonmeasurable isolated cardinal. Let \[\delta = \sup \{\gamma^+ :  \gamma < \lambda\text{ is Fr\'echet}\}\] be the strict supremum of the Fr\'echet cardinals below \(\lambda\). Then \(2^{<\delta} < \lambda \leq 2^\delta\).\qed
\end{prp}

\section{The Mitchell order without GCH}
In this section, we apply the machinery of this paper to improve the main result of \cite{MO}. We first make some general remarks to explain the presentation of the result we have chosen here.

We begin by stating some folklore facts about canonical representatives for the Rudin-Keisler equivalence classes of countably complete ultrafilters.

\begin{defn}
A countably complete ultrafilter \(U\) is {\it seed-minimal} if \([\text{id}]_U\) is the least ordinal \(\xi\) such that \(M_U = H^{M_U}(j_U[V]\cup \{\xi\})\).
\end{defn}
We include a  combinatorial reformulation without proof.
\begin{lma}
A countably complete ultrafilter \(U\) on \(\lambda\) is seed-minimal if and only if no regressive function on \(\lambda\) is one-to-one on a set in \(U\).\qed
\end{lma}

The following is immediate: 
\begin{lma}
Every countably complete ultrafilter is isomorphic to a unique seed-minimal ultrafilter.\qed
\end{lma}

\begin{defn}
Suppose \(A\) is a set. An ultrafilter \(U\) on \(P(A)\) is {\it fine} if for all \(a\in A\), for \(U\)-almost all \(\sigma \in P(A)\), \(a\in \sigma\). A fine ultrafilter \(U\) on \(P(A)\) is {\it normal} if all choice functions \(f : P(A)\to A\) are constant on set in \(U\).

A nonprincipal ultrafilter is a {\it generalized normal ultrafilter} if it is seed-minimal and isomorphic to a normal fine ultrafilter on \(P(A)\) for some set \(A\).
\end{defn}

We include a combinatorial reformulation of the notion of a generalized normal ultrafilter.

\begin{defn}
An ultrafilter \(U\) on a cardinal \(\lambda\) is {\it weakly normal} if every regressive function on \(\lambda\) takes fewer than \(\lambda\) values on a set in \(U\).
\end{defn}

\begin{thm}
If \(U\) is an ultrafilter on a cardinal \(\lambda\), the following are equivalent:
\begin{enumerate}[(1)]
\item \(U\) is weakly normal and \(j_U[\lambda]\in M_U\).
\item \(U\) is a generalized normal ultrafilter.\qed
\end{enumerate}
\end{thm}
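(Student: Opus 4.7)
My plan is to organize both directions of the equivalence around the identification $[\textnormal{id}]_U = \sup j_U[\lambda]$, which holds on each side; once that is in hand, the rest is manipulation of seeds and derived ultrafilters. I assume $U$ nonprincipal throughout, since the generalized normal notion excludes this case.

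For (2) implying (1), fix a normal fine ultrafilter $V$ on $P(A)$ together with a Rudin-Keisler isomorphism witnessing $U \cong V$. By fineness and a standard computation, $[\textnormal{id}]_V = j_V[A]$, so $M_V$ is the Skolem hull of $j_V[V]$ together with the single ordinal $\sup j_V[A]$. Replacing $A$ by its cardinality and noting that $U$ is seed-minimal on the cardinal $\lambda$, I would argue that we may take $A = \lambda$. The isomorphism $k : M_U \to M_V$ then sends $j_U[\lambda]$ pointwise to $j_V[\lambda]$ and sends $[\textnormal{id}]_U$ to the least generator $\sup j_V[\lambda]$, yielding $j_U[\lambda] \in M_U$ and $[\textnormal{id}]_U = \sup j_U[\lambda]$. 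Weak normality is then immediate: if $f : \lambda \to \lambda$ is regressive, then $j_U(f)([\textnormal{id}]_U) < \sup j_U[\lambda]$, so it is bounded by some $j_U(\beta)$ with $\beta < \lambda$, forcing $f$ to take fewer than $\lambda$ values on the $U$-set $\{\alpha : f(\alpha) < \beta\}$.

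For (1) implying (2), the heart of the matter is again $[\textnormal{id}]_U = \sup j_U[\lambda]$. The inequality $[\textnormal{id}]_U \geq \sup j_U[\lambda]$ reduces to tail uniformity of $U$: if $U$ were concentrated on a bounded initial segment then, combined with $j_U[\lambda] \in M_U$, one could extract a regressive function taking $\lambda$ values on a $U$-set, contradicting weak normality. For the reverse inequality, any $\xi$ with $\sup j_U[\lambda] \leq \xi < [\textnormal{id}]_U$ is represented as $j_U(g)([\textnormal{id}]_U)$ for some regressive $g : \lambda \to \lambda$, and weak normality places $\xi$ in $j_U[S]$ for some $S$ of size less than $\lambda$, contradicting $\xi \geq \sup j_U[\lambda]$. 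Having identified the seed, define $V = \{X \subseteq P(\lambda) : j_U[\lambda] \in j_U(X)\}$; this is well-defined precisely because $j_U[\lambda] \in M_U$, and fineness and normality of $V$ are direct unwindings of the definition. The factor embedding $k : M_V \to M_U$ sending $[\textnormal{id}]_V$ to $j_U[\lambda]$ is surjective because $[\textnormal{id}]_U = \sup j_U[\lambda]$ is definable over $M_U$ from $j_U[\lambda]$, so $U \cong V$. Seed-minimality of $U$ then falls out: any $\eta < [\textnormal{id}]_U$ lies below some $j_U(\alpha)$ with $\alpha < \lambda$, and a parameter of this bounded type cannot generate the unbounded object $j_U[\lambda]$ over $j_U[V]$.

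The main obstacle is pinning down $[\textnormal{id}]_U = \sup j_U[\lambda]$ in (1) implies (2), and in particular ruling out ordinals strictly between $\sup j_U[\lambda]$ and $j_U(\lambda)$ as the seed. This is where the hypotheses of weak normality and $j_U[\lambda] \in M_U$ must be used in tandem, as neither alone suffices; once the seed is correctly located, the remaining content is bookkeeping with derived ultrafilters and factor embeddings.
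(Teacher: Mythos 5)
There is a genuine gap: your entire argument is organized around the identity \([\mathrm{id}]_U = \sup j_U[\lambda]\), and this identity is simply false when \(\lambda\) is singular, a case the theorem covers and which the paper flags as the nontrivial one (the result is attributed to Solovay for regular \(\lambda\) and to the author for singular \(\lambda\)). Concretely, if \(\operatorname{cf}(\lambda) = \omega\), then countable completeness forces \(j_U\) to be continuous at \(\lambda\), so \(\sup j_U[\lambda] = j_U(\lambda) > [\mathrm{id}]_U\); more generally, for singular \(\lambda\) your reverse-inequality argument breaks because a set \(S \subseteq \lambda\) with \(|S| < \lambda\) can be cofinal in \(\lambda\), so weak normality only places \(\xi\) in \(j_U(S)\) (not \(j_U[S]\), as you wrote), which does not contradict \(\xi \geq \sup j_U[\lambda]\). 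Generalized normal ultrafilters on singular cardinals do exist (e.g.\ those derived from \(\lambda\)-supercompactness embeddings with \(\lambda\) singular), so this is not a vacuous case; the seed there is the least ordinal coding \(j_U[\lambda]\) over \(j_U[V]\), which is not \(\sup j_U[\lambda]\), and locating it is the real content of the singular half of the theorem. Note also that the paper itself, in the proof of the Mitchell order theorem, writes ``Since \(\lambda\) is regular, the weak normality of \(U\) implies that \([\mathrm{id}]_U = \sup j_U[\lambda]\)'' --- the regularity hypothesis there is not decorative.

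Two further points, even restricting to regular \(\lambda\). First, in (2)\(\Rightarrow\)(1) you assert that fineness and ``a standard computation'' show \(M_V = H^{M_V}(j_V[V] \cup \{\sup j_V[\lambda]\})\). Fineness and normality do give \([\mathrm{id}]_V = j_V[\lambda]\), but passing from the \emph{set} \(j_V[\lambda]\) to the single \emph{ordinal} \(\sup j_V[\lambda]\) as a generator is exactly Solovay's lemma (recovering \(j[\lambda]\) from \(\sup j[\lambda]\) via the image of a partition of \(S^\lambda_\omega\) into stationary sets); it is a substantive theorem, not bookkeeping, and it is false for singular \(\lambda\). Second, your derivation of tail uniformity in (1)\(\Rightarrow\)(2) is misattributed: if \(U\) concentrated on a set of size \(\kappa < \lambda\), every regressive function would take at most \(\kappa < \lambda\) values on that set, so weak normality can never be contradicted this way. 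The correct argument is that \(j_U[\lambda] \in M_U\) forces \(M_U\) to be closed under \(\lambda\)-sequences, whereas a nonprincipal countably complete ultrafilter concentrating on a set of size \(\kappa < \lambda\) has \(j_U(\kappa^+)\) regular in \(M_U\) but of true cofinality \(\kappa^+\), so \(M_U\) is not even closed under \(\kappa^+\)-sequences. With these repairs your outline does recover the regular case, but the singular case requires a genuinely different idea that is absent here.
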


The proof of this theorem essentially appears in \cite{MO}. It is due to Solovay when \(\lambda\) is regular and to the author when \(\lambda\) is singular.

The main theorem of this section is the following:

\begin{thm}[UA]\label{Linearity}
The Mitchell order is linear on generalized normal ultrafilters.
\end{thm}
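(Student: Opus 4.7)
The plan is to reduce linearity of the Mitchell order on generalized normal ultrafilters to the linearity of the Ketonen order, using the isolated cardinal analysis of Section~7 in place of the GCH hypothesis required in \cite{MO}. Let $U$ and $W$ be distinct generalized normal ultrafilters on cardinals $\lambda_U$ and $\lambda_W$. By the linearity of $\sE$ under UA, we may assume $U\sE W$, and then $\lambda_U\leq \lambda_W$ since $U$ is Fr\'echet uniform and $U$ is derived from an $M_W$-ultrafilter on an ordinal below $j_W(\lambda_W)$. The goal is to show $U\mo W$, i.e., $U\in M_W$.

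Since $W$ is generalized normal, $j_W$ witnesses that $M_W$ is closed under $\lambda_W$-sequences; in particular $P(\lambda_U)\subseteq M_W$. Since $U$ is generalized normal, $U$ is determined by $j_U$ together with $j_U[\lambda_U]$, namely as the ultrafilter on $\lambda_U$ derived from $j_U$ using any seed coding $j_U[\lambda_U]$. Accordingly, to conclude $U\in M_W$ it suffices to establish (i) $U\I W$, i.e., $j_U\restriction M_W$ is an internal ultrapower embedding of $M_W$, and (ii) $j_U[\lambda_U]\in M_W$. Granting (i), the restriction $j_U\restriction \lambda_U$ already belongs to $M_W$, and the closure of $M_W$ under $\lambda_U$-sequences yields (ii).

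The heart of the argument is therefore (i). From $U\sE W$, form the canonical comparison $(h,k):(M_U,M_W)\to N$ with $h = j^{M_U}_{t_U(W)}$ and $k = j^{M_W}_{t_W(U)}$ internal ultrapower embeddings; by \cref{InternalTranslation}, $U\I W$ is equivalent to $k$ being the identity. I case-split on the nature of $\lambda_W$. If $\lambda_W$ is a successor or a regular limit of Fr\'echet cardinals, then \cref{SuccessorThm} ensures that every successor cardinal of sufficient size below $\lambda_W$ is Fr\'echet, and \cref{InternalSuper} applied to $W$ then gives that $W$ is $\gamma$-supercompact for every Fr\'echet $\gamma<\lambda_W$; consequently $j_U\restriction M_W\in M_W$ whenever $\lambda_U<\lambda_W$. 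If $\lambda_W$ is isolated, apply \cref{IsolatedInternal}: every countably complete $M_W$-ultrafilter on an ordinal below $\sup j_W[\lambda_W]$ already belongs to $M_W$, so $t_W(U)$ is realized internally; combined with \cref{Noncommutative} and \cref{Nonisolation2}, this forces $k$ to be the identity. The boundary case $\lambda_U = \lambda_W$ is handled separately: when $\lambda_W$ is isolated, \cref{IsolatedUnique} identifies the unique seed-minimal Fr\'echet uniform ultrafilter on $\lambda_W$, and otherwise seed-minimality combined with $U\sE W$ and the corresponding symmetric comparison collapses to $U = W$, contradicting $U\neq W$.

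The main obstacle is the isolated singular case, where the cardinal-arithmetic counting in \cite{MO} used GCH in an essential way. The replacement here is the rigid structure theorem \cref{IsolationTheorem} and its corollary \cref{IsolatedInternal}, which provide a purely internal description of all countably complete ultrafilters on an isolated cardinal in place of the combinatorial counting, and thereby make GCH unnecessary for the comparison step that produces $U\I W$.
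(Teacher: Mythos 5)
There is a genuine gap, and it sits exactly where the theorem's difficulty lies. Your case analysis by cardinals is reasonable in outline: when \(\lambda_U < \lambda_W\), the closure of \(M_W\) under \(\lambda_W\)-sequences together with \cref{Partial} already gives \(U\in M_W\) outright (you do not need the detour through \(U\I W\), and in fact "\(j_U\restriction M_W\in M_W\)" does not follow from closure under \(\lambda_U\)-sequences --- amenability of a proper-class embedding is much stronger than coverage of its set-sized restrictions). The fatal problem is the case \(\lambda_U = \lambda_W\), which you dismiss by asserting that seed-minimality plus \(U\sE W\) "collapses to \(U=W\)." That is false: two distinct normal ultrafilters on a measurable cardinal are both seed-minimal generalized normal ultrafilters on the same cardinal, and the entire content of \cref{Linearity} is that such pairs are Mitchell-comparable. \cref{IsolatedUnique} cannot rescue this either, since it characterizes the single \(\sE\)-least Fr\'echet uniform ultrafilter \(U_\lambda\) on an isolated \(\lambda\), not an arbitrary generalized normal ultrafilter on \(\lambda\); and \cref{IsolatedInternal} only applies to \(M_W\)-ultrafilters on ordinals below \(\sup j_W[\lambda_W]\), whereas \(t_W(U)\) for a Fr\'echet uniform \(U\) on \(\lambda_W\) lives at or above \(\sup j_W[\lambda_W]\).

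The paper's route is quite different and all of its work goes into precisely this same-cardinal case. It proves the stronger \cref{IPoint}: for \(U\) generalized normal on \(\lambda\), let \(D\) be the \(\sE\)-least uniform ultrafilter with \(D\not\I U\); by \cref{Partial}, \(D\) must lie on \(\lambda\) itself. One then shows (\cref{CompleteClaim}) that \(D' = t_U(D)\) is \(\lambda^+\)-complete in \(M_U\) --- this is where the new machinery replaces GCH: when \(\lambda=\gamma^+\) with \(2^\gamma>\lambda\), one combines \cref{UltrafilterGCH}, Solovay's argument, \cref{SuccessorLemma}, \cref{UpperBound}, and \cref{UpperBoundLemma} inside \(M_U\), while the cases where local GCH holds are delegated to \cref{GCHIPoint} from \cite{MO}. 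Finally, weak normality gives \([\text{id}]_U = \sup j_U[\lambda]\), the \(\lambda^+\)-completeness of \(D'\) makes \(j^{M_U}_{D'}\) continuous there, and a short computation yields \(U\E D\), hence \(D=U\). None of this --- the reduction to the \(\sE\)-least non-internal \(D\), the completeness claim, or the continuity argument exploiting weak normality --- appears in your proposal, and without some substitute for it the main case remains unproved.
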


As a corollary one can extract various corollaries about the linearity of the Mitchell order on normal fine ultrafilters; we omit these facts here since this sort of thing appears in \cite{MO}.

Our proof of \cref{Linearity} requires proving a stronger theorem:
\begin{thm}[UA]\label{IPoint}
Suppose \(U\) is a generalized normal ultrafilter and \(D\) is a countably complete uniform ultrafilter with \(D\sE U\). Then \(D\mo U\).
\end{thm}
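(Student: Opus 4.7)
Let me lay out my plan. Let $W' = t_U(D)\in \Un^{M_U}_{\leq [\textnormal{id}]_U}$ be the canonical translation of $D$ into $M_U$, so that $D = U^-(W')$, and write $\gamma = \textsc{sp}(D)$. Since $\textsc{sp}(W') \leq [\textnormal{id}]_U < j_U(\lambda)$, we have $\gamma \leq \lambda$. The goal is to show $D \in M_U$. I would proceed by induction on $D$ in the Ketonen order, so I may assume every countably complete uniform $D' \sE D$ satisfies $D' \mo U$.

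The first step is to exploit the generalized normality of $U$. Since $U$ is weakly normal with $j_U[\lambda] \in M_U$, the order-preserving bijection $j_U \restriction \lambda \colon \lambda \to j_U[\lambda]$ is definable in $M_U$ from $j_U[\lambda]$, and a standard weak-normality argument gives the closure $P(\lambda)^V \subseteq M_U$. Therefore $P(\gamma)^V \subseteq M_U$, and for every $X \in P(\gamma)^V$ the set $j_U[X] \in M_U$. What is needed for $D \in M_U$ is that the defining condition $j_U(X) \cap \textsc{sp}(W') \in W'$ be uniformly checkable in $M_U$ from $X$ and the parameters $j_U\restriction\gamma$ and $W'$.

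The technical heart is reconciling $j_U(X)$ with $j_U[X]$. When $\textsc{sp}(W') \leq \sup j_U[\gamma]$, we have $j_U(X) \cap \textsc{sp}(W') = j_U[X] \cap \textsc{sp}(W')$ and this is computed directly in $M_U$, giving $D \in M_U$. When $\textsc{sp}(W')$ lies in the "generator interval" $(\sup j_U[\gamma], j_U(\gamma))$, I would use seed-minimality of $U$---which forces $[\textnormal{id}]_U$ to be the unique generator of $j_U$ above $\sup j_U[\lambda]$, analogously to the \(|p_U|=1\) phenomenon used in \cref{IsolationTheorem}---together with the translation function $t_U$ to factor $W'$ through a strictly Ketonen-smaller $M_U$-ultrafilter, obtaining a countably complete uniform $D' \sE D$ whose Ketonen position is strictly below $D$. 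The inductive hypothesis then gives $D' \in M_U$, and the factor embedding $k : M_D \to \text{Ult}(M_U, W')$ satisfying $k\circ j_D = j^{M_U}_{W'}\circ j_U$ and $k([\textnormal{id}]_D) = [\textnormal{id}]^{M_U}_{W'}$ lets us recover $D$ inside $M_U$ from $D'$ and $W'$.

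The main obstacle I anticipate is precisely this reduction step in the generator-interval case: one must verify that the seed-minimality of $U$, combined with the weak normality, really does pin down $W'$ tightly enough for the translation argument to produce a Ketonen-smaller witness. This will require applying \cref{InternalTranslation} to infer that if the reduction fails then $U \I W''$ for some $W''$ that sees $D$ as an internal ultrafilter, contradicting an isolation-style minimality of $U$ (essentially the generalized-normal analogue of \cref{IsolatedUnique}). With the reduction in hand, the induction closes and $D \in M_U$, that is, $D \mo U$.
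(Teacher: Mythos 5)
There is a genuine gap, and it sits exactly at what you call the technical heart. The identity \(j_U(X)\cap \textsc{sp}(W') = j_U[X]\cap \textsc{sp}(W')\) for \(\textsc{sp}(W')\leq \sup j_U[\gamma]\) is false: \(j_U(X)\) properly contains \(j_U[X]\) and, below \(\sup j_U[\gamma]\), contains a great many ordinals not of the form \(j_U(\xi)\) (take \(X=\gamma\): one side is all of \(\sup j_U[\gamma]\), the other is the thin set \(j_U[\gamma]\)). So knowing \(j_U\restriction \gamma\) and \(j_U[X]\) inside \(M_U\) does not let \(M_U\) evaluate \(j_U(X)\cap\textsc{sp}(W')\in W'\); for that you would need \(j_U\restriction P(\gamma)\) as a single element of \(M_U\), which is impossible when \(\gamma=\lambda\) (it would put \(U\in M_U\)) and which is unavailable for \(\gamma<\lambda\) precisely in the hard case \(2^\gamma>\lambda\). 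The second case of your plan is not a repair: ``seed-minimality forces \([\text{id}]_U\) to be the unique generator above \(\sup j_U[\lambda]\)'' is not justified (seed-minimality only rules out a smaller single seed, not additional generators), and the promised reduction to a Ketonen-smaller \(D'\) is never actually constructed.

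More structurally, the proposal never isolates the case that carries all the difficulty. The paper first disposes of \(\lambda\) a limit cardinal and of \(\lambda=\gamma^+\) with \(2^\gamma=\lambda\) by quoting \cref{GCHIPoint}; the remaining case is \(\lambda=\gamma^+\) with \(2^\gamma>\lambda\) and \(\gamma\) regular. There the argument is not a direct computation of \(D\) inside \(M_U\) at all: one takes \(D\) to be the \(\sE\)-least ultrafilter with \(D\not\I U\), sets \(D'=t_U(D)\), and proves \(D'\) is \(\lambda^+\)-complete \emph{in \(M_U\)} by a cardinal-arithmetic computation inside \(M_U\) (Solovay's argument plus \cref{UltrafilterGCH} to get \(2^{2^\gamma}=(2^\gamma)^+\) there, then \cref{UpperBound} and \cref{UpperBoundLemma} applied in \(M_U\)). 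Completeness makes \(j^{M_U}_{D'}\) continuous at \([\text{id}]_U=\sup j_U[\lambda]\), which yields \(j^{M_U}_{D'}([\text{id}]_U)\leq j^{M_D}_{t_D(U)}([\text{id}]_D)\), hence \(U\E D\) and \(D=U\). Nothing in your outline plays the role of this completeness step, and without it (or \cref{GCHIPoint}) even the limit-cardinal and GCH-like cases are not covered. I'd suggest restarting from the reduction to the successor case with \(2^\gamma>\lambda\) and aiming directly at the completeness of \(t_U(D)\) in \(M_U\).
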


In \cite{MO}, we proved the same theorem using a different hypothesis:
\begin{thm}\label{GCHIPoint}
Suppose \(\lambda\) is a cardinal such that \(2^{<\lambda} = \lambda\). Suppose \(U\) is a generalized normal ultrafilter on \(\lambda\) and \(D\) is a countably complete uniform ultrafilter with \(D\sE U\). Then \(D\mo U\).\qed
\end{thm}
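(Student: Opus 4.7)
The plan is to combine the $\lambda$-closure of $M_U$ coming from generalized normality with the cardinal arithmetic hypothesis $2^{<\lambda}=\lambda$ to put $D$ inside $M_U$.

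First I would record that for a generalized normal ultrafilter $U$ on $\lambda$, $M_U$ is closed under $\lambda$-sequences. By the characterization stated at the top of the section, $U$ is isomorphic to a normal fine ultrafilter $U^*$ on $P(A)$ for some set $A$; since $U$ sits on $\lambda$ one can take $|A|=\lambda$, and the standard supercompactness analysis of normal fine ultrafilters gives $M_U=M_{U^*}$ closed under $\lambda$-sequences. Two consequences I will exploit: $P(\gamma)^V\subseteq M_U$ for every $\gamma\leq\lambda$ (and in particular $P(\lambda)^V=P(\lambda)^{M_U}\in M_U$); and the enumeration $e=j_U\restriction\lambda$, as the increasing enumeration of $j_U[\lambda]\in M_U$, is itself an element of $M_U$.

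Next I analyze where $D$ lives. Setting $D'=t_U(D)$, we have $D'\in \Un^{M_U}_{\leq[\textnormal{id}]_U}$ and $D=U^-(D')$. By weak normality of $U$, $[\textnormal{id}]_U=\sup j_U[\lambda]$, so $\gamma:=\textsc{sp}(D)$ is an ordinal with $\gamma\leq\lambda$. If $\gamma<\lambda$, the conclusion is immediate: every element of $D$ lies in $P(\gamma)\subseteq M_U$, and $|D|\leq 2^\gamma\leq 2^{<\lambda}=\lambda$. Hence $D$ is a subset of $M_U$ of cardinality at most $\lambda$, and by $\lambda$-closure of $M_U$ we get $D\in M_U$, i.e.\ $D\mo U$.

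The crux is the case $\gamma=\lambda$, in which $D$ lives on $\lambda$ with $\textsc{sp}(D')=\sup j_U[\lambda]$. Here $D$ is determined by $D'\in M_U$ through the translation formula $A\in D\iff j_U(A)\cap \sup j_U[\lambda]\in D'$, which expresses $D$ in terms of $D'$ and the map $F:P(\lambda)\to M_U$ given by $F(A)=j_U(A)\cap\sup j_U[\lambda]$. My plan is to show $F\in M_U$, from which $D=F^{-1}[D']\in M_U$ follows by combining $F\in M_U$, $D'\in M_U$, and $P(\lambda)^V\in M_U$. To construct $F$ internally I would use three ingredients: seed-minimality of $U$, so that every element of $M_U$ has the form $j_U(f)(\sup j_U[\lambda])$ for some $f\in V$; the trace $e\in M_U$ together with $j_U[\lambda]\in M_U$; and \cref{Exponential}, applied to realize $F\restriction P(\beta)$ for each $\beta<\lambda$ as coming from an ultrafilter internal to $M_U$ on $2^\beta\leq \lambda$. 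The local pieces $F\restriction P(\beta)$ are then glued into a single function on $P(\lambda)=\bigcup_{\beta<\lambda}P(\beta)$ using the $\lambda$-closure of $M_U$.

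I expect the case $\gamma=\lambda$ to be the main obstacle. The difficulty is precisely that $j_U\notin M_U$, and for unbounded $A\subseteq\lambda$ the set $j_U(A)\cap\sup j_U[\lambda]$ contains ``new'' ordinals not of the form $j_U(\beta)$ for $\beta\in A$; reconstructing these internally is where generalized normality (through $j_U[\lambda]\in M_U$ and seed-minimality) and the arithmetic hypothesis $2^{<\lambda}=\lambda$ (bounding the coding cost of each $F\restriction P(\beta)$ to something $M_U$ can absorb) are both essential. I expect the concrete internal construction to mirror the Dodd parameter analysis used in the proof of \cref{IsolationTheorem}, with $2^{<\lambda}=\lambda$ taking the role that isolation plays there.
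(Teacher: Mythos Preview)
The paper does not prove this theorem; it is quoted from \cite{MO} with a bare \qed, so there is no in-paper proof to compare against. I will therefore just assess your argument on its own.

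Your case \(\textsc{sp}(D)<\lambda\) is fine and is exactly the content of \cref{Partial}: \(M_U\) is closed under \(\lambda\)-sequences, \(D\subseteq P(\gamma)\subseteq M_U\), and \(|D|\leq 2^{\gamma}\leq 2^{<\lambda}=\lambda\), so \(D\in M_U\).

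The case \(\textsc{sp}(D)=\lambda\) is where your plan fails. You propose to glue the pieces \(F\restriction P(\beta)\) into a single function on ``\(P(\lambda)=\bigcup_{\beta<\lambda}P(\beta)\)'', but that equality is simply false: the right-hand side is only the collection of \emph{bounded} subsets of \(\lambda\), whereas a uniform ultrafilter \(D\) on \(\lambda\) concentrates on unbounded sets. So even if your local construction succeeded, the glued function would be defined only on bounded sets and would say nothing about membership in \(D\). Your appeal to \cref{Exponential} is also not doing the work you want: that corollary produces a factor ultrafilter on \(2^{\gamma}\) whose factor embedding has \(\gamma\) in its range, but it does not hand you \(A\mapsto j_U(A)\cap\sup j_U[\lambda]\) as an element of \(M_U\), even for bounded \(A\). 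The essential obstruction you identify yourself---that \(j_U(A)\cap\sup j_U[\lambda]\) for unbounded \(A\) contains ordinals outside \(j_U[\lambda]\) and \(j_U\restriction P(\lambda)\notin M_U\)---is not overcome by anything in your outline. The argument in \cite{MO} does not proceed by building such an \(F\); you will need genuinely different input (from UA, not just closure and cardinal arithmetic) to handle uniform \(D\) on \(\lambda\).
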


This will be used in the proof of \cref{IPoint}. 

We also need some more facts from the general theory of the internal relation.

\begin{lma}[UA]
Suppose \(U\) is a nonprincipal countably complete ultrafilter. Let \(D\) be the \(\sE\)-least countably complete uniform ultrafilter such that \(D\not \I U\). Then for any countably complete ultrafilter \(W\), if \(W\I U\), then \(W\I D\).
\begin{proof}
We may assume without loss of generality that \(U\in \Un\). Moreover it suffices to show that for any \(W\in \Un\), if \(W\I U\) then \(W\I D\). So fix such a \(W\). To show that \(W\I D\), it suffices to show that \(t_W(D)= j_W(D)\), and for this it suffices to show that \(j_W(D)\E^{M_W} t_W(D)\). For this it is enough to show that in \(M_W\), \(t_W(D)\not \I j_W(U)\). 

Assume towards a contradiction that \(M_W\) satisfies that \(t_W(D)\I j_W(U)\). In other words, \(j^{M_W}_{t_W(D)}\) restricts to an internal ultrapower embedding of \(M_{j_W(U)}^{M_W} = j_W(M_U)\). Since \(W\I U\), it follows that \(j^{M_W}_{t_W(D)}\circ j_W\) restricts to an internal ultrapower embedding of \(M_U\). But note that \(j^{M_W}_{t_W(D)}\circ j_W = j^{M_D}_{t_D(W)}\circ j_D\). Therefore \(j_D\restriction M_U\) factors into an internal ultrapower embedding of \(M_U\). It follows easily that \(j_D\) restricts to an internal ultrapower embedding of \(M_U\), contradicting the fact that \(D\not \I U\).
\end{proof}
\end{lma}

The following key fact about internal ultrapowers is proved in \cite{RF}.

\begin{thm}[UA]
Suppose \(j_0: V\to M_0\) and \(j_1 : V\to M_1\) are ultrapower embeddings and \((i_0,i_1) : (M_0, M_1)\to N\) is their canonical comparison. Then for any ultrapower embedding \(h : N \to N'\), the following are equivalent: 
\begin{enumerate}[(1)]
\item \(h\) is an internal ultrapower embedding of \(N\).
\item \(h\) is definable over both \(M_0\) and \(M_1\).\qed
\end{enumerate}
\end{thm}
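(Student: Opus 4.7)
The plan is to handle the two directions separately: the forward direction uses the calculus of iterated ultrapowers, and the converse hinges on the universal property of the canonical comparison.

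For \((1)\Rightarrow(2)\): Suppose \(h\) is internal to \(N\), say \(h = j_W^N\) for some \(W\in N\). Since the canonical comparison embedding \(i_0:M_0\to N\) is itself an internal ultrapower, write \(i_0 = j_U^{M_0}\) for some \(U\in M_0\), and represent \(W\) as \(i_0(\vec W)(s)\) where \(\vec W\in M_0\) is a function and \(s\) is the relevant seed in \(N\). Then \(h\circ i_0\) is the iterated ultrapower of \(M_0\) by the internal sum \(U\oplus\vec W\in M_0\), so \(h\circ i_0\) is an internal (hence definable) ultrapower embedding of \(M_0\). Since \(i_0\) is already definable over \(M_0\) and every element of \(N\) is of the form \(i_0(f)(s)\) with \(f\in M_0\) and \(s\) in \(M_0\)-definable position, \(h\) itself is definable over \(M_0\); the symmetric argument handles \(M_1\).

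For \((2)\Rightarrow(1)\): I would first promote definability along \(i_0\) and \(i_1\) to internality. If \(h\) is definable over \(M_0\), then \(h\circ i_0:M_0\to N'\) is an ultrapower embedding that is definable over \(M_0\), and a standard argument (read the derived ultrafilter off the definition) shows that such an embedding is necessarily internal to \(M_0\). Setting \(e_0 = h\circ i_0\) and \(e_1 = h\circ i_1\), these are internal ultrapower embeddings satisfying \(e_0\circ j_0 = h\circ i_0\circ j_0 = h\circ i_1\circ j_1 = e_1\circ j_1\), so \((e_0,e_1)\) is an internal comparison of \((j_0,j_1)\). The key step is then to invoke the universal property of the canonical comparison under UA: any comparison of \((j_0,j_1)\) by a pair of internal ultrapower embeddings factors uniquely through \((i_0,i_1)\) via an internal ultrapower embedding \(N\to N'\). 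Applied to \((e_0,e_1)\), this yields an internal \(h^*:N\to N'\) with \(h^*\circ i_0 = e_0\) and \(h^*\circ i_1 = e_1\). Since \(i_0[M_0]\cup i_1[M_1]\) Skolem-generates \(N\), the embeddings \(h\) and \(h^*\) agree on all of \(N\), so \(h = h^*\) is internal.

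The main obstacle is the universal property of canonical comparisons: that they are minimal among internal comparisons in the sense that any other internal comparison factors through them by a further internal ultrapower embedding. Verifying this requires unpacking the Ketonen-minimal iterated construction of \((i_0,i_1)\) and applying the Dodd--Jensen-style minimality lemma (\cref{MinDefEmb}) to the candidate factor map in order to show it lands where expected and is indeed induced by an internal ultrafilter on \(N\). The auxiliary upgrade from ``definable'' to ``internal'' is more routine but also demands care, since in general definable elementary embeddings of inner models need not be internal; here it should go through because the embeddings in question have set-sized ultrapower presentations whose derived ultrafilter sequence is uniformly definable from the given definition of \(h\).
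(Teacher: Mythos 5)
First, a point of comparison: the paper does not prove this theorem at all --- it is quoted from \cite{RF} without proof --- so there is no in-paper argument to measure your proposal against. Judged on its own terms, your architecture is the standard one and the forward direction is essentially complete: if \(h\) is internal to \(N\), then \(h\circ i_0\) is an internal (hence definable) ultrapower embedding of \(M_0\), and since every element of \(N\) has the form \(i_0(f)(s_0)\) with \(s_0\) the seed of \(i_0\) and \(h(s_0)\in N'\subseteq N\subseteq M_0\) available as a parameter, \(h\) itself is definable over \(M_0\); likewise for \(M_1\). Your upgrade of ``definable ultrapower embedding of \(M_0\)'' to ``internal to \(M_0\)'' is also right for the right reason: the derived ultrafilter at an ordinal seed is a definable subset of a set belonging to \(M_0\), hence lies in \(M_0\) by separation, and the factor map onto \(N'\) is the identity.

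The converse, however, is carried entirely by the ``universal property'' you invoke, and that is where the real content of the theorem lives. Two things need to be said about it. First, the existence of a factor map \(h^*:N\to N'\) with \(h^*\circ i_0=e_0\) and \(h^*\circ i_1=e_1\) is not formal: one must check that the \(V\)-ultrafilter on \(X_0\times X_1\) derived from \(e_0\circ j_0\) using the pair \((e_0(b_0),e_1(b_1))\) coincides with the one derived from \(i_0\circ j_0\) using \((i_0(b_0),i_1(b_1))\), where \(b_0,b_1\) are the seeds of \(j_0,j_1\). This is precisely the statement that the canonical comparison is the pushout of \((j_0,j_1)\) in the category of ultrapowers with internal ultrapower embeddings; it is a theorem of comparable depth to the one you are proving, and in the source it is obtained from the fact that this category is a partial order (via the Dodd--Jensen-style \cref{MinDefEmb}) together with the translation-function analysis --- not from the present theorem. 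You must verify that the dependency runs in that direction, or your argument is circular. Second, to conclude \(h=h^*\) you use that \(i_0[M_0]\cup i_1[M_1]\) generates \(N\); this is true, but only because of the specific structure of the canonical comparison (the seed of \(i_0\) equals \(i_1(b_1)\) and symmetrically), not for an arbitrary comparison, so it deserves an explicit line. With those two points supplied from \cite{RF}, your proof is correct and appears to be the intended argument.
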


In fact, we will only use a weak corollary of this theorem:

\begin{cor}[UA]\label{tInternal}
Suppose \(D,U,W\in \Un\) and \(W\I D,U\). Then in \(M_U\), \(s_U(W)\I t_U(D)\).\qed
\end{cor}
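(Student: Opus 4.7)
The plan is to apply the cited theorem on canonical comparisons with \(j_0 = j_D\) and \(j_1 = j_U\). Under UA, the canonical comparison of \((j_D, j_U)\) takes the standard form \((j^{M_D}_{t_D(U)}, j^{M_U}_{t_U(D)}) : (M_D, M_U) \to N\), with common target \(N = M^{M_U}_{t_U(D)} = M^{M_D}_{t_D(U)}\), by the identification of canonical comparisons with translation functions. I will consider \(h = j_W \restriction N\), regarded as an ultrapower embedding of \(N\), and verify the two definability hypotheses of the theorem.

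For definability over \(M_U\): since \(W \I U\), the characterization of \(\I\) in terms of \(s\) gives \(j_W \restriction M_U = j^{M_U}_{s_U(W)}\), which is definable over \(M_U\) from the parameter \(s_U(W) \in M_U\). Because \(N\) is a definable class of \(M_U\) (with parameter \(t_U(D)\)), the further restriction \(h = j^{M_U}_{s_U(W)} \restriction N\) is also definable over \(M_U\). The symmetric argument using \(W \I D\) shows \(h\) is definable over \(M_D\). The cited theorem therefore delivers \(h = j^N_E\) for some \(E \in N\).

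To extract the conclusion, I compute \(E\) inside \(M_U\): since \(h\) is nothing other than \(j^{M_U}_{s_U(W)}\) restricted to \(M^{M_U}_{t_U(D)}\), the \(N\)-ultrafilter derived from \(h\) is, by the definition of \(s\) applied inside \(M_U\), equal to \(s^{M_U}_{t_U(D)}(s_U(W))\). Matching against \(h = j^N_E\) gives \(s^{M_U}_{t_U(D)}(s_U(W)) = E \in N = M^{M_U}_{t_U(D)}\), and the lemma ``\(A \I B\) if and only if \(s_B(A) \in M_B\)'' applied inside \(M_U\) then yields \(s_U(W) \I t_U(D)\) in \(M_U\), as desired. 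The main obstacle I anticipate is the side condition that \(h = j_W \restriction N\) qualifies as an ultrapower embedding in the precise sense required by the cited theorem; I expect to resolve this by exhibiting \([\text{id}]_W\) as the seed generating \(h\), leveraging the fact that \(j_W \restriction M_U\) is already displayed as an ultrapower of \(M_U\) via \(s_U(W) \in V\), so that the further restriction to the definable subclass \(N \subseteq M_U\) inherits the ultrapower structure.
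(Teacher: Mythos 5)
Your proof is correct and follows the intended route: the paper states this as an immediate (\(\qed\)'d) corollary of the preceding comparison theorem, and your derivation --- identifying the canonical comparison of \((j_D,j_U)\) with the translation embeddings \((j^{M_D}_{t_D(U)}, j^{M_U}_{t_U(D)})\), checking definability of \(j_W\restriction N\) over both sides via \(s_D(W)\in M_D\) and \(s_U(W)\in M_U\), and converting internality of \(j_W\restriction N\) back through the \(s\)-characterization of the internal relation inside \(M_U\) --- is exactly the argument being elided. The one point you flag as an obstacle (that \(j_W\restriction N\) is an ultrapower embedding of \(N\)) is handled precisely as you suggest, by applying the lemma \(j^{M_B}_{s_B(A)} = j_A\restriction M_B\) inside \(M_U\) with \(A = s_U(W)\) and \(B = t_U(D)\).
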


We also use the following theorems from \cite{GCH}.

\begin{prp}[UA]\label{Partial}
Suppose \(\lambda\) is a cardinal and \(U\) is a countably complete ultrafilter such that \(M_U\) is closed under \(\lambda\)-sequences. Then for any countably complete ultrafilter \(W\) on an ordinal \(\gamma < \lambda\), \(W\in M_U\).\qed
\end{prp}

\begin{thm}[UA]\label{LocalGCH}
Suppose \(\lambda\) is a cardinal and \(\lambda^+\) carries a countably complete uniform ultrafilter. Then \(2^{<\lambda} = \lambda\).\qed
\end{thm}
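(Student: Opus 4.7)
The plan is to show that $\kappa := \kappa_{U_{\lambda^+}}$ is $\delta$-supercompact for every $\delta < \lambda$ and then invoke the main GCH theorem of \cite{GCH}, in the style of \cref{LimitUniformStrong} and \cref{LowerBound}. Let $U = U_{\lambda^+}$. Since $\lambda^\sigma = \lambda^+$, \cref{SuccessorInternal} gives $\Un_{<\lambda^+} \I U$; then \cref{InternalSuper} yields that $U$ is $\gamma$-supercompact for every Fr\'echet cardinal $\gamma < \lambda^+$. By \cref{SuccessorLemma}, the Fr\'echet cardinals are cofinal in $\lambda$ (either $\lambda$ itself is Fr\'echet, or $\lambda$ is singular with sufficiently large regular cardinals below it Fr\'echet), so $\kappa$ is $\delta$-supercompact for every $\delta < \lambda$. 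Note also that $\kappa \leq \lambda$, since $\kappa$ is strongly inaccessible whereas $\lambda^+$ is a successor cardinal.

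By the main theorem of \cite{GCH} (in the form used in \cref{LimitUniformStrong}), $2^\delta = \delta^+$ for every cardinal $\delta \in [\kappa, \lambda)$; and by inaccessibility of $\kappa$, $2^\gamma < \kappa \leq \lambda$ for every $\gamma < \kappa$. Thus $2^\gamma \leq \lambda$ for all $\gamma < \lambda$, with strict inequality except possibly at $\gamma = \mu$ in the case $\lambda = \mu^+$. For $\lambda$ a limit cardinal this immediately gives $2^{<\lambda} \leq \lambda$; combined with the trivial $2^{<\lambda} \geq \lambda$, we obtain $2^{<\lambda} = \lambda$.

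For $\lambda = \mu^+$, we additionally need $\mu \geq \kappa$ so that GCH at $\mu$ yields $2^\mu = \mu^+ = \lambda$. Otherwise $\mu < \kappa$ would force $\mu^+ \leq \kappa$ by inaccessibility of $\kappa$, so $2^\mu < \kappa$, contradicting $2^\mu \geq \mu^+$. Hence $\mu \geq \kappa$ and $2^{<\lambda} = 2^\mu = \lambda$. The only real obstacle is this minor bookkeeping in the successor case; otherwise the argument is a direct application of \cref{SuccessorInternal}, \cref{InternalSuper}, and the cited GCH theorem.
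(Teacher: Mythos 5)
The paper does not actually prove this statement: it is imported from \cite{GCH} and stated without proof, so there is no internal argument to compare yours against. Judged on its own terms, your derivation is coherent and follows the template the paper itself uses in \cref{LimitUniformStrong} and \cref{LowerBound}: since \(\lambda^\sigma = \lambda^+\), \cref{SuccessorInternal} gives \(\Un_{<\lambda^+}\I U_{\lambda^+}\); \cref{InternalSuper} then gives \(\gamma\)-supercompactness of \(U_{\lambda^+}\) at every Fr\'echet \(\gamma\le\lambda\); \cref{SuccessorLemma} supplies cofinally many such \(\gamma\) below \(\lambda\); and the continuum function is controlled on \([\kappa,\lambda)\) by \cite{GCH} and below \(\kappa\) by inaccessibility. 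Two small points you should tighten: in the successor case the clean reason that \(\mu\ge\kappa\) is simply that \(\kappa\le\lambda=\mu^+\) while \(\kappa\) is a limit cardinal (your stated contradiction only closes once you reinsert \(\kappa\le\lambda\)); and passing from ``\(M_{U_{\lambda^+}}\) is closed under \(\delta\)-sequences'' to ``\(\kappa\) is \(\delta\)-supercompact'' tacitly requires \(j_{U_{\lambda^+}}(\kappa)>\delta\), though the paper makes the identical move in \cref{LimitUniformStrong}, so I will not press it.

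The issue that genuinely needs attention is circularity. You prove a theorem that the paper attributes to \cite{GCH} by invoking the main theorem of \cite{GCH}, together with \cref{InternalSuper}, which is attributed to \cite{SC} and whose proof here invokes the main theorem of \cite{SC}. Local cardinal-arithmetic facts of exactly this kind (\(2^{<\lambda}=\lambda\) when \(\lambda^+\) is Fr\'echet) tend to be needed as \emph{inputs} to the supercompactness and GCH analyses rather than consequences of them: hypotheses like \(\lambda^{<\kappa}=\lambda\) are typically what make the covering and comparison arguments in \cite{SC} and \cite{GCH} go through. Unless you can verify that the main theorem of \cite{GCH} and \cref{InternalSuper} are established without appeal to \cref{LocalGCH}, your argument risks assuming what it proves. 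A proof immune to this objection would have to work more locally, in the style of \cref{UpperBound} and \cref{LowerBound} (Silver's theorem, Prikry's theorem, and the isolation analysis), rather than routing through full \(\delta\)-supercompactness for all \(\delta<\lambda\).
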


We could make do without using the following theorem, proved in \cref{Counting}, but it will be convenient:

\begin{thm}[UA]\label{UltrafilterGCH}
For any cardinal \(\lambda\), \(|\Un_{\leq\lambda}| \leq (2^\lambda)^+\).\qed
\end{thm}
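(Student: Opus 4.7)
The plan is to proceed by induction on $\lambda$, establishing simultaneously the bound $|\Un_{\leq\mu}| \leq (2^\mu)^+$ for every $\mu \leq \lambda$. The inductive hypothesis gives $|\Un_{<\lambda}| \leq \sum_{\mu<\lambda}(2^\mu)^+ \leq 2^\lambda$, so it suffices to show $|\Un_\lambda| \leq (2^\lambda)^+$. I would first reduce to counting \emph{seed-minimal} countably complete uniform ultrafilters on $\lambda$: every $U \in \Un_\lambda$ is Rudin--Keisler isomorphic to a unique seed-minimal one (by the folklore lemma in \cref{IsolatedUltrafilters}), and each RK-class meets $\Un_\lambda$ in at most $2^\lambda$ members via the $2^\lambda$ permutations of $\lambda$. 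Thus, up to a factor of $2^\lambda$, it suffices to bound the number of seed-minimal members of $\Un_\lambda$ by $(2^\lambda)^+$.

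For the main case where $\lambda$ is isolated, \cref{IsolationTheorem} is tailor-made: every seed-minimal $W \in \Un_\lambda$ is uniquely determined by the pair $(n, D)$ where $n = |p_W|$ and $D \in \Un^{M_{U^n}}_{<\lambda_*}$ with $\lambda_* = \sup j_{U^n}[\lambda]$. By elementarity, applying the inductive statement inside $M_{U^n}$ bounds $|\Un^{M_{U^n}}_{<\lambda_*}|^{M_{U^n}}$ by $(2^{\lambda_*})^{M_{U^n}}$, and summing over $n < \omega$ gives the claimed bound provided this pulls back to at most $2^\lambda$ in $V$. For Fréchet but non-isolated $\lambda$, I would use \cref{SuccessorLemma} and \cref{SuccessorThm} to reduce to the isolated (or successor) case, exploiting the $\lambda$-strong compactness of $\kappa_{U_\lambda}$ from \cref{StrongCompactness} to obtain an analogous parametrization of seed-minimal members of $\Un_\lambda$ by pairs $(n, D)$ with $D$ living low inside an iterated ultrapower by $U_\lambda$.

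The main obstacle is translating the internal bound $(2^{\lambda_*})^{M_{U^n}}$ back to $V$. Since $M_{U^n}$ need not be $2^\lambda$-closed, it is not a priori clear that its computation of $2^{\lambda_*}$ is an ordinal of $V$-cardinality $\leq 2^\lambda$; this is where \cref{LocalGCH} should be essential, since applied inside $M_{U^n}$ to the appropriate cardinals below $j_{U^n}(\lambda)$ it pins down the internal continuum function. Combining this with the closure of $M_{U^n}$ under $\kappa_{U_\lambda}$-sequences (from \cref{MainThm1} and \cref{InternalSuper}) should let one see that $(2^{\lambda_*})^{M_{U^n}}$ has $V$-cardinality at most $2^\lambda$. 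The most delicate point is handling the base case $n = 0$ and ensuring that the induction in $V$ and the internal application of the statement in $M_{U^n}$ interact correctly---something that may require proving a slightly strengthened statement about the continuum function on Fréchet cardinals alongside the counting bound, rather than the bound in isolation.
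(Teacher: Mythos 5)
There is a genuine gap, and it is exactly at the point you flag as a ``reduction'': the parametrization you rely on only exists when \(\lambda\) is isolated. \cref{IsolationTheorem} is proved under the standing hypothesis of \cref{IsolatedUltrafilters} that \(\lambda\) is isolated, and isolation is used essentially in its proof (to rule out Fr\'echet uniform ultrafilters with spaces in \((\sup j_W[\lambda], j_W(\lambda))\)). For a Fr\'echet \(\lambda\) that is \emph{not} isolated --- for instance \(\lambda\) measurable, or \(\lambda\) a limit of Fr\'echet cardinals as when \(\lambda\) is strongly compact --- there is no ``analogous parametrization of seed-minimal members of \(\Un_\lambda\) by pairs \((n,D)\),'' and none should be expected: finitely many iterates of \(U_\lambda\) cannot absorb, say, all the normal ultrafilters on a measurable \(\lambda\) together with their sums and limits. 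But this is precisely the case in which \(|\Un_\lambda|\) could a priori be as large as \(2^{2^\lambda}\), so \cref{SuccessorLemma}, \cref{SuccessorThm} and \cref{StrongCompactness} do not close the argument. (Even in the isolated case your route needs care --- one must check that \cref{IsolationTheorem} applies to tail uniform ultrafilters on \(\lambda\) that are not \(\lambda\)-decomposable, and that the internal bound on \(|\Un^{M_{U^n}}_{<\lambda_*}|\) pulls back --- but those are repairable details; the non-isolated case is the real hole.)

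The paper's proof avoids any structure theorem for \(\Un_\lambda\) itself. Under UA the Ketonen order wellorders \(\Un\), so to get \(|\Un_{\leq\lambda}|\leq (2^\lambda)^+\) it suffices to show that each \(U\in\Un_{\leq\lambda}\) has at most \(2^\lambda\) many \(\sE\)-predecessors; by \cref{PredTriv} this reduces to showing \(|\Un_{\leq\alpha}|\leq 2^\lambda\) for every \(\alpha<\lambda\), which is \cref{FrechCount}. That lemma carries all the weight, and it is proved not by induction-plus-elementarity inside ultrapowers but by a fixed-point argument: if some least \(\gamma<\lambda\) had \(|\Un_{\leq\gamma}|>2^\lambda\), one finds \(\xi<(2^\lambda)^+\) fixed by \(j_{U_\lambda}\) and by \(j_D\) for all \(D\in\Un_{<\gamma}\), takes \(W\) to be the \(\sE\)-least ultrafilter moving \(\xi\) (necessarily on \(\gamma\), by \cref{UnFix}), and deduces both \(W\I U_\lambda\) (\cref{SuccessorInternal}) and \(U_\lambda\I W\) (\cref{FixInternal}), contradicting \cref{Noncommutative}. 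Your proposed transfer of the continuum function via \cref{LocalGCH} and closure of \(M_{U^n}\) is thus not where the difficulty lives; if you complete your parametrization argument it would at best give an alternative treatment of the isolated case, and you would still need something like \cref{FrechCount} for the remaining Fr\'echet cardinals.
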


\begin{proof}[Proof of \cref{IPoint}]
If \(\lambda\) is a limit cardinal, then since there is a cardinal that is \(\lambda\)-supercompact, by \cite{GCH}, we have that \(2^{<\lambda} = \lambda\). We are then done by \cref{GCHIPoint}. We may therefore assume that \(\lambda\) is a successor cardinal. 

Let \(\gamma\) be the cardinal predecessor of \(\lambda\). If \(2^\gamma = \lambda\), then by \cref{GCHIPoint} we are done. We may therefore assume that \(2^\gamma > \lambda\). It follows that \(\gamma\) is a regular cardinal, since otherwise \(\gamma\) is a singular strong limit cardinal by \cref{LocalGCH}, and hence \(2^\gamma = \gamma^+\) by Solovay's theorem \cite{Solovay}.

Let \(D\) be the \(\sE\)-least ultrafilter such that \(D\not \I U\). Thus \(D\E U\). By \cref{Partial}, \(D\) is a uniform ultrafilter on \(\lambda\).

We must show \(D = U\). Let \(D' = t_U(D)\).
\begin{clm}\label{CompleteClaim}
\(D'\) is \(\lambda^+\)-complete in \(M_U\).
\end{clm}
\begin{proof}
We first show that for any \(W\in \Un^{M_U}_{\leq\lambda}\), \(M_U\) satisfies that \(W\I D'\). To see this, note that such an ultrafilter \(W\) satisfies \(W\I U\) since \(j_W\restriction M_U = j_W^{M_U}\) by the closure of \(M_U\) under \(\lambda\)-sequences. It follows from \cref{tInternal} that \(M_U\) satisfies \(s_U(W) \I D'\), but this is equivalent to the fact that \(M_U\) satisfies \(W\I D'\).

Let \(U'\) be the generalized normal ultrafilter on \(\gamma\) derived from \(U\). Then \(U'\in M_U\) by \cref{Partial}. By an argument due to Solovay, it follows that in \(M_U\), every set \(A\subseteq P(\gamma)\) is in the ultrapower of a countably complete ultrafilter on \(\gamma\). Thus in \(M_U\), \(2^{2^\gamma} = |\Un_{\leq\gamma}| = (2^\gamma)^+\) by \cref{UltrafilterGCH}.

Note that \((\lambda^\sigma)^{M_U} > \lambda^+\): otherwise \((\lambda^\sigma)^{M_U} = \lambda^+\), so applying \cref{LocalGCH} in \(M_U\), we have \((2^{<\lambda})^{M_U} = \lambda\), and so since \(P(\lambda)\subseteq M_U\), in actuality \(2^{<\lambda} = \lambda\), contrary to assumption.

Therefore by \cref{SuccessorLemma}, \(M_U\) satisfies that \(\lambda^\sigma\) is isolated. We can apply \cref{UpperBound} at the Fr\'echet cardinal \(\gamma\) to conclude that \(M_U\) satisfies \(2^\gamma < \lambda^\sigma\) and hence \((2^\gamma)^+ < \lambda^\sigma\). Since \(2^\gamma \geq \lambda^+\) and \(P(\gamma)\subseteq M_U\), \(M_U\) satisfies that \(2^\gamma \geq \lambda^+\). Thus \(M_U\) satisfies \[2^{(\lambda^+)} \leq 2^{2^\gamma} = (2^\gamma)^+ < \lambda^\sigma\]

Now in \(M_U\), \(\lambda^+\) is a non-Fr\'echet regular cardinal and \(2^{(\lambda^+)} < \lambda^\sigma = (\lambda^+)^\sigma\). Therefore by \cref{UpperBoundLemma}, it follows that \(D'\) is \(\lambda^\sigma\)-complete in \(M_U\), which proves the claim.
\end{proof}
Given the claim, we finish the proof as follows. Since \(\lambda\) is regular, the weak normality of \(U\) implies that \([\text{id}]_U = \sup j_U[\lambda]\). Since \(D'\) is \(\lambda^+\)-complete, \(j^{M_U}_{D'}\) is continuous at \(\sup j_U[\lambda]\). Thus \[j^{M_U}_{D'}([\text{id}]_U) = \sup j^{M_U}_{D'}\circ j_U[\lambda] = \sup j^{M_D}_{t_D(U)}\circ j_D[\lambda] \leq j^{M_D}_{t_D(U)}([\text{id}]_D)\] The final inequality comes from the fact that \(\sup j_D[\lambda] \leq [\text{id}]_D\); this holds because \(D\) is a uniform ultrafilter on \(\lambda\). Thus \(U\E D\), and hence \(U = D\), as desired.
\end{proof}

\section{Strongly tall cardinals and set-likeness}
There is a natural question left open by the results of \cite{SC} that we resolve here:
\begin{thm}[UA]\label{STSC}
Suppose \(\kappa\) is the least ordinal such that for all ordinals \(\alpha\), there is some ultrapower embedding \(j: V\to M\) such that \(j(\kappa) >\alpha\). Then \(\kappa\) is supercompact.\qed
\end{thm}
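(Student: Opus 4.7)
The plan is to reduce the theorem to the main result of \cite{SC}, which states that under UA every strongly compact cardinal is supercompact. Hence it suffices to show that $\kappa$ is strongly compact, i.e., $\delta$-strongly compact for every cardinal $\delta \geq \kappa$.

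The first step is to show that $\kappa$ itself is a Fréchet, regular, non-isolated cardinal. The hypothesis on $\kappa$ says precisely that the class $\{j(\kappa) : j \text{ is an ultrapower embedding}\}$ is unbounded in the ordinals, so any structural constraint yielding a uniform bound on $j(\kappa)$ will produce a contradiction. If $\kappa$ were not Fréchet, then \cref{SuccessorLemma} and \cref{UpperBound} pin down the next Fréchet cardinal $\kappa^\sigma$ (below $2^\kappa$ or at the first measurable above $\kappa$), and the approximation machinery of \cref{Exponential} and \cref{Silver}, combined with the nonisolation lemmas \cref{Nonisolation} and \cref{Nonisolation2}, should let one factor every ultrapower embedding $j$ through an ultrafilter whose support controls $j(\kappa)$ uniformly; this contradicts unboundedness. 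A parallel argument using the complete description of ultrafilters on an isolated cardinal provided by \cref{IsolationTheorem}, together with \cref{LowerBound}, rules out the possibility that $\kappa$ is isolated.

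The second step is to extract $\delta$-strong compactness of $\kappa$ for each $\delta \geq \kappa$. Once $\kappa$ is Fréchet, regular, and non-isolated, \cref{MainThm1} and \cref{StrongCompactness} guarantee that for each Fréchet successor or inaccessible $\delta \geq \kappa$ some $\kappa_{U_\delta} \leq \delta$ is $\delta$-strongly compact. Since a $\delta$-strongly compact cardinal $\kappa'$ admits an ultrapower $j$ with $j(\kappa') > \delta$, letting $\delta$ range over an unbounded class forces some fixed $\kappa' \leq \kappa$ to have unbounded image; by the minimality of $\kappa$, this $\kappa'$ must equal $\kappa$. Filling in the gaps at non-Fréchet $\delta$ via \cref{SuccessorLemma} and \cref{Ketothing} gives full strong compactness of $\kappa$, and the \cite{SC} equivalence upgrades this to supercompactness. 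The main obstacle is the first step: translating the abstract ``set-likeness'' failure at $\kappa$ into the concrete statement that $\kappa$ carries a countably complete Fréchet uniform ultrafilter and is not isolated, which requires a delicate choice of witness $j_U$ with $j_U(\kappa)$ very large and a careful application of \cref{Exponential}, \cref{Silver}, and the nonisolation analysis to rule out degenerate factorizations.
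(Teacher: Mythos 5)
There is a genuine gap: the actual content of the paper's proof is missing from your proposal. The paper does not attempt to show that \(\kappa\) itself is Fr\'echet, regular, or non-isolated. Instead it proves (\cref{StronglyTall}) that \(\alpha^\sigma = \alpha^+\) for \emph{every} ordinal \(\alpha \geq \kappa\), i.e.\ that every successor cardinal above \(\kappa\) is Fr\'echet, which is exactly the statement that \(\kappa\) is \(\omega_1\)-strongly compact. Since every \(\omega_1\)-strongly compact cardinal is strongly tall, minimality makes \(\kappa\) the \emph{least} \(\omega_1\)-strongly compact cardinal, and the cited theorem of \cite{SC} (the least \(\omega_1\)-strongly compact cardinal is supercompact) finishes the proof. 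The mechanism behind \cref{StronglyTall} is concrete and nowhere in your sketch: for \(\alpha \geq \kappa\), set \(\lambda = \alpha^\sigma\), \(U = U_\lambda\), let \(\xi\) be a fixed point of \(j_U\) above \(\textsc{crt}(j_U)\), and let \(W\) be the \(\sE\)-least ultrafilter with \(j_W(\xi) \neq \xi\); such a \(W\) exists precisely because of the tallness hypothesis, and this is the \emph{only} place that hypothesis is used. Then \(U \I W\) by \cref{FixInternal}, while \(W \not\I U\) by \cref{Commutativity} because \(\xi\) is definable from \(j_U\restriction\text{Ord}\) yet moved by \(j_W\); so \cref{Nonisolation2} shows \(\lambda\) is not isolated, forcing \(\lambda = \alpha^+\). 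Your first step, by contrast, aims at the wrong target (\(\kappa\) being Fr\'echet rather than all successors above \(\kappa\) being Fr\'echet), and its justification ("should let one factor every ultrapower embedding through an ultrafilter whose support controls \(j(\kappa)\) uniformly") is not an argument.

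Your second step is also logically broken as stated. \cref{MainThm1} gives, for each Fr\'echet successor or inaccessible \(\delta\), that \(\kappa_{U_\delta}\) is \(\delta\)-strongly compact, but \(\kappa_{U_\delta}\) depends on \(\delta\) and there is no reason it should be \(\leq \kappa\): minimality of \(\kappa\) only rules out \(\kappa_{U_\delta} < \kappa\) for sufficiently large \(\delta\) (a \(\delta\)-strongly compact \(\kappa' < \kappa\) would have \(j(\kappa') > \delta\) for some ultrapower embedding, contradicting the boundedness of \(\{j(\kappa') : j\}\)), so it pushes the witnesses \emph{up}, not down to \(\kappa\). And the statement "for unboundedly many \(\delta\) there is a \(\delta\)-strongly compact cardinal \(\leq \delta\)" does not imply that any single cardinal is fully strongly compact, since the witnesses may drift upward. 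This is precisely why the paper routes the argument through \(\omega_1\)-strong compactness of \(\kappa\) itself and the \cite{SC} theorem about the least \(\omega_1\)-strongly compact cardinal, rather than trying to assemble full strong compactness of \(\kappa\) from local strong compactness of varying cardinals.
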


What we show here is the following:

\begin{prp}[UA]\label{StronglyTall}
Suppose \(\kappa\) is the least ordinal such that for all ordinals \(\alpha\), there is some ultrapower embedding \(j: V\to M\) such that \(j(\kappa) >\alpha\). Then \(\kappa\) is \(\omega_1\)-strongly compact.
\end{prp}

Clearly \(\kappa\) is then the least \(\omega_1\)-strongly compact cardinal. It follows that \(\kappa\) is supercompact by the following theorem from \cite{SC}:
\begin{thm}[UA]
The least \(\omega_1\)-strongly compact cardinal is supercompact.\qed
\end{thm}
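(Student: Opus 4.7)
The plan is to establish directly that every regular cardinal $\lambda \geq \kappa$ carries a countably complete uniform ultrafilter, from which the conclusion follows by the Bagaria--Magidor characterization \cite{MagidorBagaria} of $\omega_1$-strong compactness (the nontrivial direction being a classical extension of Ketonen's theorem). That characterization is the only external input I would cite.

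Fix a regular cardinal $\lambda \geq \kappa$. By hypothesis, pick a countably complete ultrapower embedding $j : V \to M$ with $j(\kappa) > \lambda$. (Note that the hypothesis forces the critical point of any such $j$ to be at most $\kappa$, so $\kappa$ is already at least the least measurable, hence uncountable.) The key step is to show $j$ must be discontinuous at $\lambda$, i.e., $\sup j[\lambda] < j(\lambda)$. Suppose otherwise; then $j\restriction \lambda$ is cofinal in $j(\lambda)$, and by regularity of $\lambda$ in $V$ combined with the regularity of $j(\lambda)$ in $M$, this forces $j(\lambda) = \lambda$. But $\kappa \leq \lambda$ would then give $j(\kappa) \leq j(\lambda) = \lambda$, contradicting the choice of $j$.

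Set $\xi = \sup j[\lambda]$ and let $W = \{X \subseteq \lambda : \xi \in j(X)\}$ be the ultrafilter derived from $j$ at $\xi$. Then $W$ is countably complete (since $j$ is by a countably complete ultrafilter). For any $X \in W$, $X$ must be unbounded in $\lambda$: otherwise $X \subseteq \beta$ for some $\beta < \lambda$, and $j(X) \subseteq j(\beta) < \xi$ (using that $j(\beta) \in j[\lambda]$ but $\xi \notin j[\lambda]$ since $\lambda$ is a limit ordinal), contradicting $\xi \in j(X)$. By regularity of $\lambda$, $|X| = \lambda$, so $W$ is Fr\'echet uniform and $\lambda$ is Fr\'echet.

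I do not anticipate any substantive obstacle. UA is not used anywhere in the argument; the whole thing is a minor variation on the ``discontinuity yields Fr\'echet'' theme that already appeared in the proof of \cref{SuccessorLemma}. The one subtle point is forcing the discontinuity, but this is immediate from the regularity/cofinality comparison in $V$ versus $M$. The remainder is just invoking the Bagaria--Magidor characterization to convert ``every regular $\lambda\geq\kappa$ is Fr\'echet'' into $\omega_1$-strong compactness of $\kappa$.
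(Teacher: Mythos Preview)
You have misread the statement. The theorem asserts that the least $\omega_1$-strongly compact cardinal $\kappa$ is \emph{supercompact}; your argument concludes only that $\kappa$ is $\omega_1$-strongly compact, which is the hypothesis. Showing that every regular $\lambda \geq \kappa$ is Fr\'echet and then invoking the Bagaria--Magidor characterization simply recovers what was assumed. Nothing in your proposal addresses supercompactness, and the remark that ``UA is not used anywhere'' should have been a red flag: the implication from $\omega_1$-strong compactness to supercompactness is not a theorem of ZFC, so any proof must invoke UA. The paper does not prove this theorem at all; it is quoted from \cite{SC}, where the argument is substantial.

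There is also a genuine mathematical gap in the argument you did give. The step ``by regularity of $\lambda$ in $V$ combined with the regularity of $j(\lambda)$ in $M$, this forces $j(\lambda) = \lambda$'' is false. Take $U$ a normal ultrafilter on a measurable $\kappa$ and $\lambda = \kappa^+$. Then $j_U$ is continuous at $\lambda$ (every $f:\kappa\to\kappa^+$ is bounded since $\kappa^+$ is regular), $\lambda$ is regular, and yet $j_U(\kappa) > \kappa^+ = \lambda$ (since $j_U(\kappa)$ is inaccessible in $M_U$ while $(\kappa^+)^{M_U} = \kappa^+$) and hence $j_U(\lambda) > \lambda$. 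Regularity of $j(\lambda)$ in $M$ tells you nothing about its $V$-cofinality. So even if you had been aiming at \cref{StronglyTall} --- for which the overall shape of your plan is at least relevant --- the discontinuity step would need a different justification; the paper's proof of that proposition does use UA, via \cref{FixInternal} and \cref{Nonisolation2}.
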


For the proof of \cref{StronglyTall}, we use the following fact:
\begin{lma}[UA]\label{FixInternal}
Suppose \(\xi\) is an ordinal and \(W\) is the \(\sE\)-least uniform countably complete ultrafilter \(Z\) such that \(j_Z(\xi) \neq \xi\). Then for any countably complete ultrafilter \(U\) such that \(j_U(\xi) = \xi\), \(U\I W\).
\begin{proof}
By \cref{InternalTranslation}, it suffices to show that in \(M_U\), \(j_U(W) \E \tr U W\). For this it is enough to show that \(j^{M_U}_{\tr U W}(j_U(\xi)) \neq j_U(\xi)\), since in \(M_U\), \(j_U(W)\) is the \(\sE\)-least countably complete ultrafilter \(Z\) such that \(j_Z^{M_U}(\xi) \neq \xi\). This is a consequence of the following calculation: \begin{align*}j^{M_U}_{\tr U W}(j_U(\xi)) &= j^{M_W}_{\tr W U}(j_W(\xi))\geq j_W(\xi) > \xi = j_U(\xi) \qedhere\end{align*}
\end{proof}
\end{lma}

\begin{proof}[Proof of \cref{StronglyTall}]
We show that for any ordinal \(\alpha \geq \kappa\), \(\alpha^\sigma = \alpha^+\). Let \(\lambda = \alpha^\sigma\). Let \(U = U_\lambda\). Let \(\xi\) be the least fixed point of \(j_{U}\) such that \(\xi > \textsc{crt}(j_{U})\). Let \(W\) be the \(\sE\)-least uniform countably complete ultrafilter such that \(j_W(\xi) \neq \xi\). By \cref{FixInternal}, \(U \I W\). 

We claim \(W\not \I U\). Suppose towards a contradiction that \(W\I U\). Then by \cref{Commutativity}, \(j_W(j_U\restriction \text{Ord}) = j_U\restriction \text{Ord}\). This is a contradiction since \(\xi\) is \(\Delta_1\)-definable from \(j_U\restriction \text{Ord}\) without parameters, yet \(j_W(\xi) \neq \xi\).

Thus \(U\I W\) and \(W\not \I U\). By \cref{Nonisolation2}, it follows that \(\lambda\) is not an isolated cardinal. Thus \(\lambda = \alpha^+\), as desired.
\end{proof}

We connect this up with some ideas from \cite{IR}.

\begin{defn}
A {\it pointed ultrapower} is a pair \((M,\alpha)\) where \(M\) is an ultrapower of \(V\) and \(\alpha\) is an ordinal. The class of pointed ultrapowers is denoted \(\mathscr P\). If \(\mathcal M = (M,\alpha)\) is a pointed ultrapower, then \(\alpha_\mathcal M\) denotes \(\alpha\).
\end{defn}

We will sometimes abuse notation by writing \(\mathcal M\) when we really mean the ultrapower \(M\) such that \(\mathcal M= (M,\alpha_\mathcal M)\).

\begin{defn}
The {\it completed seed order} is defined on \(\mathcal M_0,\mathcal M_1\in \mathscr P\) by setting \(\mathcal M_0 \swo \mathcal M_1\) if there is a pair of elementary embeddings \((i_0,i_1) : (\mathcal M_0,\mathcal M_1)\to N\) such that \(i_1\) is an internal ultrapower embedding of \(M_1\) and \(i_0(\alpha_{\mathcal M_0}) < i_1(\alpha_{\mathcal M_1})\).
\end{defn}

Not every pointed ultrapower must have a rank in the completed seed order, but for those that do, we use the following notation:

\begin{defn}For any \(\mathcal M\in \mathscr P\), \(|\mathcal M|_\infty\) denotes the rank of \(\mathcal M\) in the completed seed order if it exists.\end{defn}

Here we will only consider the completed seed order assuming UA, in which case we have the following fact from \cite{IR}:

\begin{lma}[UA]
Suppose \(\mathcal M_0,\mathcal M_1\in \mathscr P\). Then the following are equivalent:
\begin{enumerate}[(1)]
\item \(\mathcal M_0\swo \mathcal M_1\).
\item There exist internal ultrapower embeddings \((i_0,i_1) : (\mathcal M_0,\mathcal M_1)\to N\) such that \(i_0(\alpha_{\mathcal M_0}) < i_1(\alpha_{\mathcal M_1})\).
\end{enumerate}
\end{lma}

We have the following fact about \(\mathscr P\):

\begin{lma}\label{PointedAbs0}
Suppose \(N\) is an ultrapower. Then \(\mathscr P^{N}\subseteq\mathscr P\) and \({\swo^N}\) is a suborder of \({\swo} \restriction \mathscr P^N\).\qed
\end{lma}

\begin{cor}
For any ultrapower \(N\) and \(\mathcal M\in \mathscr P^N\), \(|\mathcal M|_\infty^N\leq |\mathcal M|_\infty\).
\end{cor}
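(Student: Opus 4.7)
The plan is a transfinite induction on the \(\swo\)-rank, with \cref{PointedAbs0} supplying the crucial inclusion of predecessor sets. Fix an ultrapower \(N\) and \(\mathcal M \in \mathscr P^N\), and suppose as induction hypothesis that \(|\mathcal M'|_\infty^N \leq |\mathcal M'|_\infty\) holds for every \(\mathcal M' \in \mathscr P^N\) with \(|\mathcal M'|_\infty < |\mathcal M|_\infty\). By \cref{PointedAbs0}, every \(\swo^N\)-predecessor \(\mathcal M'\) of \(\mathcal M\) is also a \(\swo\)-predecessor of \(\mathcal M\) in \(V\), so \(|\mathcal M'|_\infty\) is defined and strictly below \(|\mathcal M|_\infty\); the induction hypothesis then gives \(|\mathcal M'|_\infty^N \leq |\mathcal M'|_\infty\). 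Taking suprema and enlarging the index set from \(\swo^N\)-predecessors to \(\swo\)-predecessors yields
\[|\mathcal M|_\infty^N \;=\; \sup_{\mathcal M' \swo^N \mathcal M} \bigl(|\mathcal M'|_\infty^N + 1\bigr) \;\leq\; \sup_{\mathcal M' \swo \mathcal M} \bigl(|\mathcal M'|_\infty + 1\bigr) \;=\; |\mathcal M|_\infty,\]
closing the induction.

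The only thing to verify—hardly a serious obstacle—is that \(|\mathcal M|_\infty^N\) makes sense inside \(N\) in the first place. Well-foundedness of \(\swo^N\) transfers from \(V\) by \cref{PointedAbs0}, since any infinite \(\swo^N\)-descending chain would descend in \(\swo\). For set-likeness of \(\swo^N\) below \(\mathcal M\) one uses the same inclusion, which realizes the \(\swo^N\)-predecessors of \(\mathcal M\) as a definable subclass of the \(V\)-set of \(\swo\)-predecessors of \(\mathcal M\); alternatively, one may simply read the statement as asserting the inequality whenever the left-hand side is defined, which renders the question moot.
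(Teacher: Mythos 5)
Your argument is correct and matches the paper's intent: the corollary is stated as an immediate consequence of \cref{PointedAbs0}, and the standard rank-comparison induction you give (every \(\swo^N\)-predecessor is a \(\swo\)-predecessor, so ranks in the suborder are bounded by ranks in the larger order) is exactly the implicit proof. Your closing remark about existence is the right caveat, since \(|\mathcal M|_\infty\) is only a partial notion; the inequality should be read as asserting that when \(|\mathcal M|_\infty\) exists, so does \(|\mathcal M|_\infty^N\), and it is no larger.
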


With UA, we can prove sharper results:

\begin{lma}[UA]\label{PointedAbs}
Suppose \(N\) is an ultrapower. Then \(\mathscr P^{N}\subseteq\mathscr P\) and \({\swo^N}\) is equal to \({\swo} \restriction \mathscr P^N\). Moreover, for any \(\mathcal M\in \mathscr P\), there is some \(\mathcal M'\in \mathscr P^N\) such that \(\mathcal M \equiv_S \mathcal M'\).\qed
\end{lma}

\begin{cor}[UA]
For any ultrapower \(N\) and \(\mathcal M\in \mathscr P^N\), \(|\mathcal M|_\infty^N= |\mathcal M|_\infty\).\qed
\end{cor}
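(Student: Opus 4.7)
The plan is to combine the two clauses of \cref{PointedAbs} with a straightforward transfinite induction on $\swo$-rank. The inequality $|\mathcal M|_\infty^N \leq |\mathcal M|_\infty$ is already furnished by the preceding corollary, which requires no UA and uses only \cref{PointedAbs0}. So the substantive task is the reverse inequality $|\mathcal M|_\infty \leq |\mathcal M|_\infty^N$.

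I proceed by induction on $|\mathcal M|_\infty$. Assume the result for every $\mathcal M_* \in \mathscr P^N$ of strictly smaller $\swo$-rank. Using the standard characterization of rank in a wellfounded relation, it suffices to show that for every $\mathcal M'' \in \mathscr P$ with $\mathcal M'' \swo \mathcal M$, there is some $\mathcal M' \in \mathscr P^N$ with $\mathcal M' \swo^N \mathcal M$ and $|\mathcal M'|_\infty^N \geq |\mathcal M''|_\infty$. Applying the ``moreover'' clause of \cref{PointedAbs} to $\mathcal M''$ yields $\mathcal M' \in \mathscr P^N$ with $\mathcal M' \equiv_S \mathcal M''$; this forces $|\mathcal M'|_\infty = |\mathcal M''|_\infty$ and $\mathcal M' \swo \mathcal M$. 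The first clause of \cref{PointedAbs}, which asserts $\swo^N = \swo\restriction \mathscr P^N$, then promotes $\mathcal M' \swo \mathcal M$ to $\mathcal M' \swo^N \mathcal M$. Since $\mathcal M'$ has strictly smaller $\swo$-rank than $\mathcal M$, the inductive hypothesis applies and gives $|\mathcal M'|_\infty^N = |\mathcal M'|_\infty = |\mathcal M''|_\infty$.

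Taking the supremum of $|\mathcal M''|_\infty + 1$ over all $\mathcal M'' \swo \mathcal M$ then yields $|\mathcal M|_\infty \leq |\mathcal M|_\infty^N$, completing the proof. There is no real obstacle beyond invoking \cref{PointedAbs}: both clauses are essential---one to guarantee that downward witnesses in $\mathscr P$ of any prescribed $\swo$-rank can be realized inside $N$, and the other to ensure those witnesses are recognized as $\swo$-below $\mathcal M$ by $N$ itself. The only mild bookkeeping point is that the induction is indexed by the external rank $|\mathcal M|_\infty$, which is what allows us to feed witnesses from $\mathscr P$ into the argument and subsequently transfer them to $\mathscr P^N$.
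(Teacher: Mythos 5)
Your argument is correct and is exactly the intended derivation: the paper leaves this corollary as an immediate consequence of \cref{PointedAbs}, with the preceding corollary (via \cref{PointedAbs0}) giving one inequality and the order-equality plus the ``moreover'' clause giving the other, just as you spell out.
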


\begin{lma}[UA]
Suppose \(\alpha\) is an ordinal. Then the following are equivalent:
\begin{enumerate}[(1)]
\item \(\alpha = |(V,\xi)|_\infty\) for some ordinal \(\xi\).
\item \(\alpha\) is fixed by all ultrapower embeddings.
\end{enumerate}
\begin{proof}
Assume (1). Let \(i : V\to N\) be an ultrapower embedding. Then \(i(\alpha) = |(N,i(\xi))|_\infty^N\). By \cref{PointedAbs0}, \( |(N,i(\xi))|_\infty^N \leq |(N,i(\xi))|_\infty\). It is easy to see that every predecessor of \((N,i(\xi))\) in the completed seed order is a predecessor of \((V,\xi)\) in the completed seed order. Therefore \(|(N,i(\xi))|_\infty \leq |(V,\xi)|_\infty = \alpha\). Putting everything together \(i(\alpha) \leq \alpha\), so \(i(\alpha) = \alpha\). Since \(i\) was an arbitrary ultrapower embedding, (2) holds.

It is to prove (2) implies (1) that we need UA. Fix an ultrapower embedding \(j : V\to M\) such that for some \(\xi'\), \(\alpha\) is the rank of \((M,\xi')\) in the completed seed order. Then by \cref{PointedAbs}, in \(M\), \(\alpha\) is the rank of \((M,\xi')\) in the completed seed order. But \(\alpha = j(\alpha)\) since \(\alpha\) is fixed by all ultrapower embeddings. It follows by the elementarity of \(j : V\to M\) that there is some \(\xi\) such that \(\alpha\) is the rank of \((V,\xi)\) in the completed seed order, as claimed.
\end{proof}
\end{lma}

\begin{cor}[UA]\label{FixRank}
The following are equivalent:
\begin{enumerate}[(1)]
\item For all \(\mathcal M\in \mathscr P\), \(|\mathcal M|_\infty\) exists.
\item For unboundedly many ordinals \(\alpha\), \(\alpha\) is fixed by all ultrapower embeddings.
\end{enumerate}
\begin{proof}
That (1) implies (2) is immediate. To show (2) implies (1), note that (2) implies that there are unboundedly many ordinals \(\xi\) such that the rank of \((V,\xi)\) in the completed seed order is an ordinal. Therefore for all ordinals \(\xi\), \((V,\xi)\) has a rank in the completed seed order, since \((V,\xi_0) \swo (V,\xi_1)\) when \(\xi_0 < \xi_1\). But for any \(\mathcal M\in \mathscr P\), there is some \(\xi\) such that \(\mathcal M\swo (V,\xi)\). Therefore \(\mathcal M\) has a rank in the completed seed order, i.e. \(|\mathcal M|_\infty\) exists.
\end{proof}
\end{cor}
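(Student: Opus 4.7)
The plan is to exploit the previous lemma's equivalence between ``$\alpha$ fixed by all ultrapower embeddings'' and ``$\alpha = |(V,\xi)|_\infty$ for some $\xi$,'' together with the evident monotonicity of the completed seed order on pointed ultrapowers of the form $(V,\xi)$.

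The direction (1) $\Rightarrow$ (2) should be immediate: if every $\mathcal M \in \mathscr P$ has a rank, then in particular each $(V,\xi)$ does. By the preceding lemma, each such rank is an ordinal fixed by all ultrapower embeddings. To see these ranks are unbounded in the ordinals, note that for $\xi_0 < \xi_1$ the identity embeddings witness $(V,\xi_0) \swo (V,\xi_1)$, so $|(V,\xi_0)|_\infty < |(V,\xi_1)|_\infty$; hence as $\xi$ ranges over $\mathrm{Ord}$, the ranks $|(V,\xi)|_\infty$ form an unbounded class of fixed points.

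For (2) $\Rightarrow$ (1) I would proceed in two steps. First, assuming (2), the preceding lemma provides unboundedly many $\xi$ for which $|(V,\xi)|_\infty$ is defined; combined with the monotonicity observation $(V,\xi_0) \swo (V,\xi_1)$ for $\xi_0 < \xi_1$ and the downward closure of the field of a wellfounded partial order, this forces $|(V,\xi)|_\infty$ to exist for \emph{every} ordinal $\xi$. Second, I would show that an arbitrary $\mathcal M = (M,\alpha) \in \mathscr P$ satisfies $\mathcal M \swo (V,\xi)$ for some $\xi$: since $M$ is an ultrapower of $V$, there is an internal ultrapower embedding $j : V \to M$, and taking $N = M$, $i_0 = \mathrm{id}_M$, $i_1 = j$, and any $\xi > \alpha$, the inequality $i_0(\alpha) = \alpha < j(\xi) = i_1(\xi)$ furnishes the required witness. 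From $\mathcal M \swo (V,\xi)$ and the existence of $|(V,\xi)|_\infty$, the rank $|\mathcal M|_\infty$ exists (and is bounded by $|(V,\xi)|_\infty$).

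I do not anticipate a genuine obstacle here; the only subtle point is checking that the identity map on $M$ counts as an internal ultrapower embedding (it is the ultrapower by a principal ultrafilter on $1$) so that the comparison $(\mathrm{id}_M, j) : (M,V) \to M$ legitimately witnesses $\mathcal M \swo (V,\xi)$. Everything else is bookkeeping with the preceding lemma and the trivial monotonicity of $\swo$ along the $(V,\xi)$-chain.
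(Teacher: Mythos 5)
Your proof is correct and follows essentially the same route as the paper's: use the preceding lemma to get unboundedly many $\xi$ with $|(V,\xi)|_\infty$ defined, propagate downward along the chain $(V,\xi_0)\swo(V,\xi_1)$, and bound an arbitrary $\mathcal M$ by some $(V,\xi)$. The only difference is that you spell out the witness $(\mathrm{id}_M, j):(M,V)\to M$ for $\mathcal M\swo(V,\xi)$, which the paper leaves implicit; this is a correct and harmless elaboration.
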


\begin{lma}\label{FixTall}
Exactly one of the following holds:
\begin{enumerate}[(1)]
\item For unboundedly many ordinals \(\alpha\), \(\alpha\) is fixed by all ultrapower embeddings.
\item There is an ordinal \(\kappa\) such that for all ordinals \(\alpha\), for some ultrapower embedding \(j : V\to M\), \(j(\kappa) > \alpha\).
\end{enumerate}
\begin{proof}
Obviously (2) implies (1) fails, so we just need to show that if (2) fails then (1) holds. Assume (2) fails. Fix an ordinal \(\xi\). We will define an ordinal \(\alpha\geq \xi\) that is fixed by all ultrapower embeddings. Let \[\alpha = \sup \{j(\xi) : j\text{ is an ultrapower embedding of \(V\)}\}\] The supremum \(\alpha\) exists since (2) fails. But for any ultrapower embedding \(i : V\to N\), 
\begin{align*}i(\alpha) &= \sup \{j(i(\xi)) : j\text{ is an internal ultrapower embedding of \(N\)}\}\\
&\leq \sup \{j(\xi) : j\text{ is an ultrapower embedding of \(V\)}\}\\
&=\alpha
\end{align*}
Therefore \(\alpha\) is fixed by all ultrapower embeddings.  
\end{proof}
\end{lma}

Combining \cref{STSC}, \cref{FixRank}, and \cref{FixTall}, we obtain the following fact:
\begin{thm}[UA]
Exactly one of the following holds:
\begin{enumerate}[(1)]
\item For all \(\mathcal M\in \mathscr P\), \(|\mathcal M|_\infty\) exists.
\item There is a supercompact cardinal.\qed
\end{enumerate}
\end{thm}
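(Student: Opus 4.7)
The plan is to observe that the theorem is a direct repackaging of \cref{STSC}, \cref{FixRank}, and \cref{FixTall}, which together produce a chain of equivalences between clause (1) of the theorem and the failure of clause (2).

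First I would dispatch the easy direction, that (1) and (2) cannot simultaneously hold. Suppose $\kappa$ is supercompact. Then for every ordinal $\alpha$, a fine normal ultrafilter $U$ on $P_\kappa(\alpha)$ yields an ultrapower embedding $j_U : V \to M_U$ with $j_U(\kappa) > \alpha$. Thus $\kappa$ witnesses clause (2) of \cref{FixTall}. By the ``exactly one'' clause of \cref{FixTall}, this rules out clause (1) of \cref{FixTall}, and then by \cref{FixRank} it rules out clause (1) of the present theorem.

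For the converse, suppose clause (1) of the present theorem fails. By \cref{FixRank}, clause (2) of \cref{FixRank} also fails, so only boundedly many ordinals are fixed by every ultrapower embedding. By the ``exactly one'' clause of \cref{FixTall}, there is an ordinal $\kappa$ such that $\{j(\kappa) : j \text{ an ultrapower embedding of } V\}$ is unbounded in the ordinals. Let $\kappa_0$ be the least such ordinal, which exists because the property in question is defined by a formula in the ordinal $\kappa$. Then $\kappa_0$ is precisely the ordinal hypothesized by \cref{STSC}, so that theorem applies and $\kappa_0$ is supercompact, yielding clause (2).

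There is no real obstacle beyond threading the three cited results together; the only point requiring any care is the observation that a supercompact cardinal witnesses clause (2) of \cref{FixTall}, which is the standard fact that a supercompact cardinal is ``strongly tall'' in the sense relevant here.
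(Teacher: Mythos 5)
Your proposal is correct and is exactly the argument the paper intends: the theorem is stated as an immediate combination of \cref{STSC}, \cref{FixRank}, and \cref{FixTall}, and you have simply threaded those three results together in the same way, including the standard observation that a supercompact cardinal witnesses clause (2) of \cref{FixTall}. No issues.
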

With a bit more work, one can show:
\begin{thm}[UA]
Suppose \(M\) is an ultrapower and \(\alpha\) is an ordinal less than the least supercompact cardinal of \(M\). Then \(|(M,\alpha)|_\infty\) exists.\qed
\end{thm}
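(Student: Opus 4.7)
The plan is to relativize the argument behind the preceding theorem to $M$, exploiting that $M$ satisfies UA by the elementarity of the ultrapower embedding $V\to M$. Dispose first of the trivial case: if $M$ has no supercompact cardinal, then the preceding theorem applied inside $M$ asserts that every $\mathcal M\in\mathscr P^M$ has a rank $|\mathcal M|^M_\infty$ in $\swo^M$; in particular $(M,\alpha)$ does. By \cref{PointedAbs}, $\swo^M = \swo\restriction\mathscr P^M$ and each element of $\mathscr P$ is $\equiv_S$-equivalent to some element of $\mathscr P^M$, so the $\swo$-predecessors of $(M,\alpha)$ form a set modulo $\equiv_S$ and $|(M,\alpha)|_\infty$ exists.

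Assume then that $M$ has a least supercompact cardinal $\kappa$, with $\alpha<\kappa$ by hypothesis. By \cref{STSC} applied inside $M$, $\kappa$ is also the least strongly tall cardinal of $M$. Hence for each $\xi<\kappa$, $\xi$ fails to be strongly tall in $M$, so the class $\{i(\xi):i\text{ is an internal ultrapower embedding of }M\}$ is bounded in $M$; let $\beta_\xi$ denote its supremum. Repeating the calculation in the proof of \cref{FixTall} inside $M$ shows that each $\beta_\xi$ is fixed by every internal ultrapower embedding of $M$. Since $\beta_\xi\geq\xi$ and the $\beta_\xi$ are monotone in $\xi$, these fixed points are cofinal in $\kappa$.

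Now apply the lemma preceding \cref{FixRank} inside $M$, which requires UA in $M$: each such fixed point $\beta$ equals $|(M,\gamma_\beta)|^M_\infty$ for some ordinal $\gamma_\beta\in M$, and for distinct $\beta$'s the $\gamma_\beta$'s are distinct since a pointed ultrapower determines its rank. The class $\{\gamma_\beta\}$ therefore has size at least $\kappa$ inside $M$; since $|\alpha|^M<\kappa$, some $\gamma_\beta$ satisfies $\gamma_\beta\geq\alpha$. Then $(M,\alpha)\swo^M(M,\gamma_\beta)$ (using the identity as both internal ultrapower embeddings), so the $\swo^M$-predecessors of $(M,\alpha)$ form a set and $|(M,\alpha)|^M_\infty$ exists. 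Transferring via \cref{PointedAbs} as in the first case yields the existence of $|(M,\alpha)|_\infty$.

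The main obstacle will be checking the localizations: that \cref{STSC}, the supremum calculation in \cref{FixTall}, and the lemma preceding \cref{FixRank} all go through inside $M$ below $\kappa$ in a genuinely local way, and that UA in $M$ follows from UA in $V$. The transfer from $\swo^M$ to $\swo$ is then immediate from \cref{PointedAbs}.
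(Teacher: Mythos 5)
The paper states this theorem without proof (``with a bit more work, one can show''), so there is no official argument to compare against; judged on its own terms, your plan of relativizing the \cref{STSC}--\cref{FixTall}--\cref{FixRank} dichotomy to \(M\) is the natural one and is nearly complete. The trivial case, the passage of UA to \(M\), the observation that the \cref{FixTall} computation is local (it only needs the single ordinal \(\xi\) to fail to be strongly tall in \(M\), so that \(\beta_\xi\) is a set-sized supremum fixed by every internal ultrapower embedding of \(M\)), and the transfer back to \(V\) via \cref{PointedAbs} are all sound. You should also note explicitly that a supercompact cardinal is strongly tall, so that \(M\) has a strongly tall cardinal and \cref{STSC} applied in \(M\) really does identify the least strongly tall cardinal of \(M\) with \(\kappa\); but this is standard.

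The one genuine gap is the sentence ``Since \(\beta_\xi\geq\xi\) and the \(\beta_\xi\) are monotone in \(\xi\), these fixed points are cofinal in \(\kappa\).'' Monotonicity together with \(\beta_\xi\geq\xi\) only shows \(\sup_{\xi<\kappa}\beta_\xi\geq\kappa\); a priori some \(\beta_\xi\) could already be \(\geq\kappa\), in which case the range of \(\xi\mapsto\beta_\xi\) could stabilize at a single value above \(\kappa\) and your counting argument (which needs \(\kappa\)-many distinct \(\gamma_\beta\)'s) collapses. The missing observation is that no ordinal \(\beta\geq\kappa\) can be fixed by every internal ultrapower embedding of \(M\): if it were, then \(i(\kappa)\leq i(\beta)=\beta\) for every internal ultrapower embedding \(i\) of \(M\), contradicting that \(\kappa\) is strongly tall in \(M\), which you have already established. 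Hence each \(\beta_\xi\) lies in \([\xi,\kappa)\), the fixed ordinals are genuinely unbounded in the regular cardinal \(\kappa\) and so number \(\kappa\), and the remainder of your argument --- injecting them into the \(\gamma_\beta\)'s, finding some \(\gamma_\beta\geq\alpha\) because \(|\alpha|^M<\kappa\), and concluding via \((M,\alpha)\swo^M(M,\gamma_\beta)\) --- goes through. (In the degenerate case \(\gamma_\beta=\alpha\) you obtain \(|(M,\alpha)|^M_\infty=\beta\) outright rather than via a strict \(\swo^M\)-predecessor, which is harmless.)
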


\section{Counting countably complete ultrafilters}\label{Counting}
In this final section we prove a cardinal arithmetic fact whose proof requires techniques from this paper.

\begin{thm}[UA]\label{NumberMeas}
A set \(X\) carries at most \((2^{|X|})^+\) countably complete ultrafilters.
\end{thm}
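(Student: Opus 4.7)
Let $\lambda = |X|$. My plan is to reduce first to the Fr\'echet uniform case and then bound $|\Un_\mu|$ for each cardinal $\mu \leq \lambda$ separately. Every non-principal countably complete ultrafilter $U$ on $X$ is determined by a subset $A\subseteq X$ of minimum cardinality with $A\in U$, together with the Fr\'echet uniform ultrafilter $U\restriction A$ on $A$; since $X$ has at most $2^\lambda$ subsets, it suffices to prove that for each cardinal $\mu\leq \lambda$, the set of Fr\'echet uniform countably complete ultrafilters on a fixed set of size $\mu$ has cardinality at most $2^\lambda$ (the $\lambda$ many principal ultrafilters are absorbed into the final bound).

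I would then establish $|\Un_\mu|\leq 2^\mu$ by transfinite induction on $\mu$. Under UA the Ketonen order $\sE$ linearly wellorders $\Un_\mu$, so the task is to bound its order type. Every $W\in \Un_\mu$ other than $U_\mu$ satisfies $W\sE U_\mu$ by definition of $U_\mu$, and the translation function $W\mapsto t_{U_\mu}(W)$ is an injection of $\Un_\mu\setminus\{U_\mu\}$ into $\Un^{M_{U_\mu}}_{\leq [\textnormal{id}]_{U_\mu}}$. This reduces the problem to bounding the cardinality of the target inside $M_{U_\mu}$, applying the inductive hypothesis there (UA is preserved by ultrapowers, cf.~\cite{SO}).

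The main obstacle is closing this induction: $|[\textnormal{id}]_{U_\mu}|^{M_{U_\mu}}$ can be as large as $2^\mu$, and a naive application of the inductive hypothesis inside $M_{U_\mu}$ produces only a bound of the form $(2^{2^\mu})^{+}$, which is far too weak. To tighten this, I would split on the isolated/non-isolated dichotomy. If $\mu$ is not isolated, then \cref{MainThm1} supplies a $\mu$-strongly compact $\kappa_{U_\mu}\leq\mu$, so $M_{U_\mu}$ is highly closed and the inductive bound tightens to $2^\mu$. If $\mu$ is isolated, then \cref{IsolationTheorem} expresses each $W\in\Un_\mu$ as $(U_\mu^n)^-(D)$ for some $n<\omega$ and an internal uniform ultrafilter $D$ on an ordinal below $\sup j_{U_\mu^n}[\mu]$, while \cref{IsolatedInternal} identifies the possible $D$'s with countably complete ultrafilters lying in $M_{U_\mu^n}$, which can be counted by the inductive hypothesis relativized to $M_{U_\mu^n}$ together with the closure of $M_{U_\mu^n}$ below its critical sequence.

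Summing the resulting bound $|\Un_\mu|\leq 2^\mu\leq 2^\lambda$ over the at most $\lambda$ Fr\'echet cardinals $\mu\leq\lambda$, and multiplying by the $2^\lambda$ possible choices of support $A\subseteq X$, gives a total of at most $2^\lambda<(2^\lambda)^+$ countably complete ultrafilters on $X$, as desired.
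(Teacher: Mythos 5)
Your reduction to Fr\'echet uniform ultrafilters on cardinals \(\mu\leq\lambda\) is fine (it matches what the paper does for non-Fr\'echet \(\lambda\)), but the induction you propose cannot be closed, for two reasons. First, the target bound \(|\Un_\mu|\leq 2^\mu\) is simply false under UA: by Solovay's argument, if \(\mu\) is \(2^\mu\)-supercompact then \(\mu\) carries \(2^{2^\mu}\) normal (hence Fr\'echet uniform, countably complete) ultrafilters, and under UA this number is \((2^\mu)^+>2^\mu\). Indeed the paper's own proof of \cref{IPoint} relies on exactly the computation \(|\Un_{\leq\gamma}|=2^{2^\gamma}=(2^\gamma)^+\) inside an ultrapower. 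So the best levelwise bound is \((2^\mu)^+\) --- which is precisely why the theorem asserts \((2^\lambda)^+\) rather than \(2^\lambda\). Second, your injection is based on a reversal of the order: \(U_\mu\) is the \(\sE\)-\emph{least} Fr\'echet uniform ultrafilter on \(\mu\), so every other \(W\in\Un_\mu\) satisfies \(U_\mu\sE W\), not \(W\sE U_\mu\); consequently \(t_{U_\mu}\) does not map \(\Un_\mu\setminus\{U_\mu\}\) into \(\Un^{M_{U_\mu}}_{\leq[\textnormal{id}]_{U_\mu}}\), and the whole ``count inside \(M_{U_\mu}\)'' scheme collapses. Even setting that aside, your proposed repair of the ``main obstacle'' does not work: \(\mu\)-strong compactness of \(\kappa_{U_\mu}\) gives closure of \(M_{U_\mu}\) but does not shrink a count of ultrafilters carried out inside \(M_{U_\mu}\) from \((2^{2^\mu})^+\) to \(2^\mu\), since \(M_{U_\mu}\) computes its own (large) power sets at \(j_{U_\mu}(\mu)\).

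The idea your proposal is missing is the one the paper isolates as \cref{FrechCount}: for a Fr\'echet cardinal \(\lambda\) and any \(\alpha<\lambda\), \(|\Un_{\leq\alpha}|\leq 2^\lambda\) --- a bound in terms of the \emph{larger} cardinal \(\lambda\), not of \(\alpha\). Its proof is not an internal counting argument at all: assuming some \(\gamma<\lambda\) has \(|\Un_{\leq\gamma}|>2^\lambda\), one finds an ordinal \(\xi<(2^\lambda)^+\) above the ranks \(|W|_S\) of all \(W\) with \(\textsc{sp}(W)<\gamma\) that is fixed by \(j_{U_\lambda}\) and by \(j_D\) for all \(D\in\Un_{<\gamma}\); then \cref{UnFix} forces the \(\sE\)-least ultrafilter moving \(\xi\) to live on \(\gamma\), and \cref{SuccessorInternal} together with \cref{FixInternal} makes it and \(U_\lambda\) mutually internal, contradicting \cref{Noncommutative}. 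With that in hand, the count of \(\Un_{\leq\lambda}\) goes through the \(\sE\)-wellorder: each \(U\in\Un_{\leq\lambda}\) has at most \(\prod_{\alpha<\lambda}|\Un_{\leq\alpha}|\leq(2^\lambda)^\lambda=2^\lambda\) predecessors (a predecessor is coded by a \(\lambda\)-sequence of ultrafilters on smaller ordinals), so the order type is at most \((2^\lambda)^+\). None of this machinery appears in your sketch, and without it neither the levelwise bound nor the bound on initial segments of \(\sE\) is available.
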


\cref{NumberMeas} is of course a consequence of GCH since \(X\) carries at most \(2^{2^{|X|}}\) ultrafilters, but we see no way to obtain it as a corollary of the theorems in \cite{GCH}. Instead the proof given here imitates that of \cite{GCH} Lemma 4.3, replacing the Mitchell order with more general concepts.

\begin{defn}
If \(U\) and \(U'\) are countably complete ultrafilters on ordinals \(\gamma\) and \(\gamma'\), we set \(U\sE U'\) if there is a sequence \(\langle U_\alpha : \alpha < \gamma'\rangle\) such that
\begin{enumerate}[(1)]
\item For \(U'\)-almost all \(\alpha < \gamma'\), \(U_\alpha\) is a countably complete ultrafilter on \(\alpha\).
\item For any \(X\subseteq \gamma\), \(X\in U\) if and only if \(X\cap \alpha\in U_\alpha\) for \(U'\)-almost all \(\alpha < \gamma'\).
\end{enumerate}
\end{defn}

\begin{defn}
An ultrafilter \(U\) on an ordinal \(\alpha\) is {\it tail uniform} (or just {\it uniform}) if \(\alpha \setminus \beta\in U\) for all \(\beta < \alpha\). 
\end{defn}
\begin{defn}
For any ordinal \(\alpha\), \(\Un_\alpha\) denotes the set of uniform countably complete ultrafilters on \(\alpha\), \(\Un_{<\alpha} = \bigcup_{\beta <\alpha} \Un_{\beta}\), \(\Un_{\leq\alpha}  = \bigcup_{\beta \leq\alpha} \Un_{\beta}\), and \(\Un = \bigcup_{\alpha\in \text{Ord}} \Un_\alpha\).
\end{defn}

\begin{defn}
For \(U\in \Un\), let \(|U|_S\) denote the rank of \(U\) in \(\sE\) restricted to \(\Un\).
\end{defn}

We use the following lemma for our main result. The proof appears in \cite{IR}.

\begin{lma}[UA]\label{UnFix}
Suppose \(U\) is a nonprincipal countably complete uniform ultrafilter. Then \(j_U(|U|_S) > |U|_S\).\qed
\end{lma}

The following bound is essentially immediate from the definition of \(\sE\).

\begin{lma}\label{PredTriv}
A countably complete uniform ultrafilter on \(\lambda\) has at most \(\prod_{\alpha < \lambda} |\Un_{\leq\alpha}|\) uniform \(\sE\)-predecessors.\qed
\end{lma}

\begin{defn}
An ultrafilter \(U\) on a set \(X\) is {\it Fr\'echet uniform} if for all \(A\subseteq X\) with \(|A| < |X|\), \(X\setminus A\in U\). A cardinal \(\lambda\) is {\it Fr\'echet} if it carries a countably complete Fr\'echet uniform ultrafilter.
\end{defn}

\begin{thm}[UA]\label{FrechCount}
Suppose \(\lambda\) is a Fr\'echet cardinal. Then for any \(\alpha < \lambda\), \(|\Un_{\leq\alpha}| \leq 2^\lambda\).
\begin{proof}
We may assume by induction that the theorem holds below \(\lambda\). If \(\lambda\) is a limit of Fr\'echet cardinals, then the inductive hypothesis easily implies the conclusion of the theorem. Therefore assume \(\lambda\) is not a limit of Fr\'echet cardinals. 

Let \(U_\lambda\) be the \(\sE\)-least Fr\'echet uniform ultrafilter on \(\lambda\). Since \(\lambda\) is not a limit of uniform cardinals, for every ultrafilter \(W\) on a cardinal less than \(\lambda\), \(W\I U_\lambda\).

Assume towards a contradiction that \(\gamma < \lambda\) is least such that \(|\Un_{\leq\gamma}| > 2^\lambda\). We record that since \(|\Un_{\leq \gamma}|\geq \kappa_{U_\lambda}\), in fact \(\gamma \geq \kappa_{U_{\lambda}}\). 

Let \(\alpha = \sup \{|W|_S :\textsc{sp}(W) < \gamma\}\). We claim that for any ordinal \(\xi\in [\alpha,(2^\lambda)^+]\), there is some \(D\) such that \(j_D(\xi) > \xi\). To see this let \(D\) be the unique countably complete uniform ultrafilter with \(|D|_S = \xi\). Then \(D\) is a uniform ultrafilter on \(\gamma\), so \(D\) is nonprincipal. Therefore by \cref{UnFix}, \(j_D(\xi) > \xi\) as desired.

Let \(\xi\) be the least ordinal above \(\alpha\) fixed by \(j_{U_{\lambda}}\) and also by \(j_D\) for all \(D\in \Un_{<\gamma}\). Then \(\xi < (2^\lambda)^+\) since the intersection of \(2^\lambda\)-many \(\omega\)-club subsets of \((2^\lambda)^+\) is \(\omega\)-club. Let \(W\in \Un\) be the \(\sE\)-least ultrafilter with \(j_W(\xi) > \xi\). By \cref{UnFix}, \(W\) is a countably complete uniform ultrafilter on \(\gamma < \lambda\), so \[W\I U_{\lambda}\] by \cref{SuccessorInternal} since \(\lambda\) is not a limit of Fr\'echet cardinals. Moreover, since \(j_{U_{\lambda}}(\xi) = \xi\), \[U_{\lambda}\I W\] by \cref{FixInternal}.

Since \(\kappa_W \leq \gamma < \lambda\), \[\Un_{{<}\kappa_W}\I U_\lambda\] by \cref{SuccessorInternal}. Since \(\kappa_{U_\lambda}\leq \lambda\) is a strong limit cardinal while \(2^{2^\gamma} \geq |\Un_{\leq\gamma}| > 2^\lambda\), \(\kappa_{U_\lambda} \leq \gamma\) and therefore \[\Un_{{<}\kappa_{U_\lambda}}\I W\] by \cref{FixInternal}. Since \(W\) and \(U_\lambda\) are nonprincipal, this contradicts \cref{Noncommutative}.
\end{proof}
\end{thm}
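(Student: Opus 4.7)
The plan is to proceed by induction on $\lambda$. If $\lambda$ is a limit of Fréchet cardinals, pick a Fréchet cardinal $\lambda' < \lambda$ with $\lambda' > \alpha$; the inductive hypothesis at $\lambda'$ gives $|\Un_{\leq\alpha}| \leq 2^{\lambda'} \leq 2^\lambda$. So assume $\lambda$ is not a limit of Fréchet cardinals. Let $U = U_\lambda$ be the $\sE$-least Fréchet uniform countably complete ultrafilter on $\lambda$. Writing $\lambda = (\gamma')^\sigma$ for $\gamma'$ the supremum of the Fréchet cardinals strictly below $\lambda$, \cref{SuccessorInternal} gives $D \I U$ for every $D \in \Un_{<\lambda}$.

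Assume toward a contradiction that the theorem fails at $\lambda$, and let $\gamma < \lambda$ be least with $|\Un_{\leq\gamma}| > 2^\lambda$. Since $|\Un_{\leq\gamma}| \leq 2^{2^\gamma}$ and $\kappa_U \leq \lambda$ is a strong limit cardinal, we have $\gamma \geq \kappa_U$. The minimality of $\gamma$ and the inductive bound supply $|\Un_{<\gamma}| \leq 2^\lambda$. Setting $\eta = \sup\{|W|_S : \textsc{sp}(W) < \gamma\}$, we therefore have $\eta < (2^\lambda)^+$. By \cref{UnFix}, every ordinal $\xi \in [\eta,(2^\lambda)^+)$ is moved by some $D \in \Un_{\leq \gamma}$: indeed, the $\sE$-least nonprincipal uniform $D$ with $|D|_S \geq \xi$ satisfies $\textsc{sp}(D) = \gamma$ and $j_D(|D|_S) > |D|_S \geq \xi$.

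Next I construct $\xi \in (\eta, (2^\lambda)^+)$ fixed by $j_U$ and simultaneously by $j_D$ for every $D \in \Un_{<\gamma}$: the set of common fixed points is the intersection of at most $2^\lambda$ many $\omega$-club subclasses of $(2^\lambda)^+$, hence $\omega$-club in $(2^\lambda)^+$, so such $\xi$ exists. Let $W$ be the $\sE$-least countably complete ultrafilter with $j_W(\xi) \neq \xi$. By the previous paragraph $W$ exists; by \cref{UnFix} it is uniform; and the choice of $\xi$ together with the definition of $\eta$ forces $\textsc{sp}(W) = \gamma$. In particular $W$ lies on an ordinal strictly below $\lambda$.

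Four applications of the internality lemmas now close the argument. Because $\lambda$ is not a limit of Fréchet cardinals and $\gamma, \kappa_W < \lambda$, \cref{SuccessorInternal} delivers $W \I U$ and $\Un_{<\kappa_W} \I U$. Because $j_U(\xi) = \xi$ and, since $\kappa_U \leq \gamma$, every $D \in \Un_{<\kappa_U}$ also fixes $\xi$ by construction, \cref{FixInternal} delivers $U \I W$ and $\Un_{<\kappa_U} \I W$. As $U$ and $W$ are both nonprincipal countably complete ultrafilters, this directly contradicts \cref{Noncommutative}. The main obstacle is the cardinal-arithmetic bookkeeping: squeezing both $\eta$ and the common fixed point $\xi$ beneath $(2^\lambda)^+$ depends on the inductive hypothesis simultaneously bounding $|\Un_{<\gamma}|$ and the seed ranks $|W|_S$ for $W$ with $\textsc{sp}(W) < \gamma$, and it is the precise interaction of these two bounds that drives the induction.
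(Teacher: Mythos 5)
Your proof is correct and takes essentially the same route as the paper's: the same induction with the limit case dispatched by the inductive hypothesis, the same least counterexample \(\gamma \geq \kappa_{U_\lambda}\), the same supremum of seed ranks below \((2^\lambda)^+\), the same \(\omega\)-club common fixed point \(\xi\), and the same final contradiction obtained from \(W \I U_\lambda\) and \(\Un_{<\kappa_W} \I U_\lambda\) via \cref{SuccessorInternal} together with \(U_\lambda \I W\) and \(\Un_{<\kappa_{U_\lambda}} \I W\) via \cref{FixInternal}, against \cref{Noncommutative}. One small point of hygiene: where you take the \(\sE\)-least nonprincipal uniform \(D\) with \(|D|_S \geq \xi\) and cite \(j_D(|D|_S) > |D|_S \geq \xi\), note that in fact \(|D|_S = \xi\) (the rank function is onto an initial segment of the ordinals, and \(\xi\) lies below the supremum of the ranks of \(\Un_{\leq\gamma}\)), since \(j_D\) moving \(|D|_S\) would not by itself move a strictly smaller ordinal \(\xi\); with that observation your step coincides exactly with the paper's choice of the unique \(D\) with \(|D|_S = \xi\).
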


As a corollary we can prove \cref{NumberMeas}:
\begin{proof}[Proof of \cref{NumberMeas}]
It clearly suffices to show that \(|\Un_{\leq\lambda}| \leq (2^\lambda)^+\) for all cardinals \(\lambda\). By induction assume that \(|\Un_{\leq \bar \lambda}| \leq (2^{\bar \lambda})^+\) for all cardinals \(\bar \lambda < \lambda\).
 
Assume first that \(\lambda\) is not Fr\'echet. Then any countably complete ultrafilter on \(\lambda\) concentrates on a set \(A\in [\lambda]^{<\lambda}\). Therefore there is a surjection from the set of pairs \((A,U)\) where \(A\in [\lambda]^{<\lambda}\) and \(U\) is a countably complete ultrafilter on \(A\) to \(\Un_{\leq\lambda}\). The number of such pairs is bounded by \[[\lambda]^{<\lambda}\cdot \sup_{\bar \lambda < \lambda} |\Un_{\leq\bar \lambda}|\leq 2^\lambda \cdot (2^\lambda)^+\]
Thus \(|\Un_{\leq\lambda}|\leq (2^\lambda)^+\).

Now assume that \(\lambda\) is Fr\'echet. We claim that for any ultrafilter \(U\in \Un_{\leq\lambda}\) has at most \(2^\lambda\)-many \(\sE\)-predecessors. By \cref{PredTriv}, \(U\) has at most \(\prod_{\alpha < \lambda} |\Un_{\leq\alpha}|\) predecessors. But by \cref{FrechCount}, for all \(\alpha < \lambda\), \(|\Un_{\leq\alpha}| \leq 2^\lambda\), so \[\prod_{\alpha < \lambda} |\Un_{\leq\alpha}| \leq (2^\lambda)^\lambda = 2^\lambda\]
Thus \(U\) has at most \(2^\lambda\)-many \(\sE\)-predecessors. Since \(\Un_{\leq\lambda}\) is wellordered by \(\sE\) with initial segments of length at most \(2^\lambda\), \(|\Un_{\leq\lambda}|\leq (2^\lambda)^+\).
\end{proof}

We remark that it is not hard to use this fact to prove that GCH holds above the least strongly compact cardinal. This proof does not yield a result that is as local as the one in \cite{GCH}.

\bibliography{Bibliography}{}

\begin{thebibliography}{10}

\bibitem{SC}
Gabriel Goldberg.
\newblock The equivalence of strong and supercompactness under {UA}.
\newblock To appear.

\bibitem{MO}
Gabriel Goldberg.
\newblock The linearity of the {M}itchell order.
\newblock To appear.

\bibitem{GCH}
Gabriel Goldberg.
\newblock Strongly compact cardinals and the {GCH}, revisited.
\newblock To appear.

\bibitem{Prikry}
Kenneth Kunen and Karel Prikry.
\newblock On descendingly incomplete ultrafilters.
\newblock {\em J. Symbolic Logic}, 36:650--652, 1971.

\bibitem{Ketonen}
Jussi Ketonen.
\newblock Strong compactness and other cardinal sins.
\newblock {\em Ann. Math. Logic}, 5:47--76, 1972/73.

\bibitem{MagidorBagaria}
Joan Bagaria and Menachem Magidor.
\newblock On {$\omega_1$}-strongly compact cardinals.
\newblock {\em J. Symb. Log.}, 79(1):266--278, 2014.

\bibitem{SO}
Gabriel Goldberg.
\newblock The seed order.
\newblock To appear.

\bibitem{IR}
Gabriel Goldberg.
\newblock The internal relation.
\newblock To appear.

\bibitem{Kunen}
Kenneth Kunen.
\newblock Elementary embeddings and infinitary combinatorics.
\newblock {\em J. Symbolic Logic}, 36:407--413, 1971.

\bibitem{Kanamori}
Robert~M. Solovay, William~N. Reinhardt, and Akihiro Kanamori.
\newblock Strong axioms of infinity and elementary embeddings.
\newblock {\em Ann. Math. Logic}, 13(1):73--116, 1978.

\bibitem{RF}
Gabriel Goldberg.
\newblock The {U}ltrapower {A}xiom and the {R}udin-{F}rolik order.
\newblock To appear.

\bibitem{Solovay}
Robert~M. Solovay.
\newblock Strongly compact cardinals and the {GCH}.
\newblock In {\em Proceedings of the {T}arski {S}ymposium ({P}roc. {S}ympos.
  {P}ure {M}ath., {V}ol. {XXV}, {U}niv. {C}alifornia, {B}erkeley, {C}alif.,
  1971)}, pages 365--372. Amer. Math. Soc., Providence, R.I., 1974.

\end{thebibliography}
\bibliographystyle{unsrt}

\end{document}